\newtheorem{thm}{Theorem}[section]
\newtheorem{cor}[thm]{Corollary}
\newtheorem{lem}[thm]{Lemma}
\newtheorem{rem}[thm]{Remark}
\newtheorem{exa}[thm]{Example}
\newtheorem{prop}[thm]{Proposition}
\begin{document}

\title{Neumann boundary optimal control problems governed by parabolic variational equalities}

\author{Carolina M. Bollo \thanks{Depto. Matem\'atica, FCEFQyN, Universidad Nac. de R\'io Cuarto, Ruta 36 Km 601, 5800 R\'io Cuarto, Argentina.
  E-mail: cbollo@exa.unrc.edu.ar; cgariboldi@exa.unrc.edu.ar}\,\,  Claudia M. Gariboldi $^\ast$\,Domingo A.
Tarzia \thanks{Depto. de Matem\'atica-CONICET, FCE, Universidad Austral, Paraguay 1950, S2000FZF Rosario, Argentina. E-mail:
DTarzia@austral.edu.ar} }

\maketitle

\begin{abstract}
We consider a heat conduction problem $S$ with mixed boundary
conditions in a $n$-dimensional domain $\Omega$ with regular
boundary and a family of problems $S_{\alpha}$ with also mixed boundary conditions in $\Omega$, where $\alpha>0$ is
the heat transfer coefficient on the portion of the boundary
$\Gamma_{1}$. In relation to these state systems, we formulate
\emph{Neumann boundary} optimal control problems on the heat flux $q$ which
is definite on the complementary portion $\Gamma_{2}$ of the
boundary of $\Omega$. We obtain existence and uniqueness of the
optimal controls, the first order optimality conditions in terms of
the adjoint state and the convergence of the optimal controls, the
system state and the adjoint state when the heat transfer
coefficient $\alpha$ goes to infinity. Furthermore, we formulate
particular \emph{boundary} optimal control problems on a real
parameter $\lambda$, in relation to the parabolic problems $S$ and $S_{\alpha}$ and to mixed elliptic problems $P$ and $P_{\alpha}$.  We find a explicit form for the optimal controls, we prove monotony properties and we obtain convergence results when the parameter time goes to infinity.
\end{abstract}

\textbf{keywords:} Parabolic variational equalities, Optimal
control, Mixed boundary conditions, Optimality conditions. Convergence.

\textbf{2000 AMS Subject Classification:} 49J20, 35K05, 49K20.

{\thispagestyle{empty}} 

\section{Introduction}

Following \cite{GT1,MT,TaBoGa}, we will study some Neumann boundary parabolic and elliptic optimal control problems. We consider a bounded domain $\Omega $ in ${\Bbb R}^{n}$, whose
regular boundary $\Gamma $ consists of the union of the two disjoint
portions $\Gamma _{1}$ and $\Gamma _{2}$ with $|\Gamma_{1}|>0$
and $|\Gamma_{2}|>0$. We denote with $|\Gamma_i|=meas(\Gamma_i)$ (for
$i=1,2$), the $(n-1)$-dimensional Hausdorff measure of the portion $\Gamma_i$ on $\Gamma$. Let
$[0,T]$ a time interval, for a $T>0$. We present the following
heat conduction problems $S$ and $S_{\alpha }$ (for
each parameter $\alpha >0)$ respectively, with mixed boundary
conditions (we denote by $u(t)$ to the function $u(\cdot,t)$):
\begin{equation}
\frac{\partial u}{\partial t}-\Delta u=g\,\ \text{ in }\Omega \ \
\,\,\,\,\,\,u\big|_{\Gamma _{1}}=b\,\,\,\,\,\,\,\,-\frac{\partial
u}{\partial n}\bigg|_{\Gamma _{2}}=q\,\,\,\,\,\,\,\,u(0)=v_b
\label{P}
\end{equation}
\begin{equation}
\frac{\partial u}{\partial t}-\Delta u=g\,\ \text{ in }\Omega \ \ \,\,\,\,\,\,-\frac{\partial u}{\partial n%
}\bigg|_{\Gamma _{1}}=\alpha (u-b)\,\,\,\,\,\,\,\,-\frac{\partial
u}{\partial n}\bigg|_{\Gamma _{2}}=q \,\,\,\,\,\,\,\,u(0)=v_b
\label{Palfa}
\end{equation}
where $u$ is the temperature in $\Omega\times (0,T)$, $g$ is the
internal energy in $\Omega $, $b$ is the temperature on $
\Gamma_{1}$ for (\ref{P}) and the temperature of the external
neighborhood of $\Gamma_{1}$ for (\ref{Palfa}), $v_{b}=b$ on
$\Gamma_{1}$, $q$ is the heat flux on $\Gamma_{2}$ and $\alpha
>0$ is the heat transfer coefficient on $\Gamma_{1}$ through a Robin condition, which satisfy the hypothesis: $g\in
\mathcal{H}=L^2(0,T;L^2(\Omega))$, $q\in
\mathcal{Q}=L^2(0,T;L^2(\Gamma_2))$ and $b\in
H^{\frac{1}{2}}(\Gamma_1)$. In addition, $v_{b}\in H^{1}(\Omega)$ is the initial temperature for (\ref{P}) and (\ref{Palfa}), respectively.

 Let $u$ and
$u_{{\alpha}}$ the unique solutions to the parabolic problems
(\ref{P}) and (\ref{Palfa}), whose variational formulations are
given by (\cite{MT}):
\begin{equation}\label{Pvariacional}
\left\{
\begin{array}{l l}
u-v_b \in L^2(0,T; V_0), \qquad u(0)=v_b\quad\text{and}\quad \dot{u}\in L^2(0,T; V_0')\\
\text{such that}\quad \langle \dot{u}(t), v\rangle+a(u(t),v)=L(t,v),
\quad \forall v\in V_0,
\end{array}
\right.
\end{equation}
\begin{equation}\label{Palfavariacional}
\left\{
\begin{array}{l l}
u_{\alpha} \in L^2(0,T; V), \qquad u_{\alpha}(0)=v_b\quad\text{and}\quad \dot{u}_{\alpha}\in L^2(0,T; V')\\
\text{such that}\quad \langle \dot{u}_{\alpha}(t),
v\rangle+a_{\alpha}(u_{\alpha}(t),v)=L_{\alpha}(t,v), \quad \forall
v\in V,
\end{array}
\right.
\end{equation}
where $\langle\cdot,\cdot\rangle$ denote the duality between the
functional space ($V$ or $V_{0}$) and its dual space ($V'$ or
$V'_{0}$) and
\[
V=H^{1}(\Omega )\,;\,\,\,\,\,\,\,\,\,V_{0}=\{v\in V:\,v\big|_{\Gamma
_{1}}=0\}\,;\,\,\,\,\,\,\,\,\,Q=L^2(\Gamma_2); \quad H=L^2(\Omega)
\]
\vspace{-0.6cm}
\[
(g,h)_{H}=\int_{\Omega}gh\,dx; \quad (q,\eta)_{Q}=\int_{\Gamma
_{2}}q\eta \,d\gamma;
\]
\[
a(u,v)=\int_{\Omega }\nabla u\nabla vdx;\quad a_{\alpha
}(u,v)=a(u,v)+\alpha \int\limits_{\Gamma_1}u vd\gamma\,\,
\]
\vspace{-0.6cm}
\[
L(t,v)=(g(t),v)_{H}-(q(t),v)_{Q};\quad L_{{\alpha}}(t,v)=\,L
(t,v)+\alpha \int\limits_{\Gamma_1}b vd\gamma.
\]
All data, $g$, $q$, $b$, $v_b$ and the domain $\Omega$ with the boundary $\partial \Omega=\Gamma_{1}\cup \Gamma_{2}$ are assumed to be sufficiently
smooth so that the problems (\ref{P}) and (\ref{Palfa}) admit variational solutions in Sobolev spaces. The existence and uniqueness of
the solutions to the variational equalities (\ref{Pvariacional}) and (\ref{Palfavariacional}), is well known, see for example \cite{B,CH,DL,GuH}.

Let $\mathcal{H}=L^2(0,T;H)$ be, with norm
$||.||_{\mathcal{H}}$ and internal product $
(g,h)_{\mathcal{H}}=\int\limits_0^T(g(t),h(t))_{H} dt$, and the space $\mathcal{Q}=L^2(0,T;Q)$, with norm $||.||_{\mathcal{Q}}$ and internal product $(q,\eta)_{\mathcal{Q}}=\int\limits_0^T(q(t),\eta(t))_{Q} dt$.
\newline
For the sake of simplicity, for a Banach space $X$ and $1\leq
p\leq\infty$, we will often use $L^{p}(X)$ instead of
$L^{p}(0,T;X)$.

If we denote by $u_{q}$ and $u_{\alpha q}$ the unique solution to
the problems (\ref{Pvariacional}) and (\ref{Palfavariacional})
respectively, we formulate the following \emph{boundary} optimal
control problems for the heat flux $q$ as control variable, \cite{GT1,Li,Tro}:
\begingroup
\addtolength{\jot}{0.5em}
\begin{align}
& \text{find }\quad \overline{q} \in \mathcal{Q}\quad\text{ such
that }\quad
J(\overline{q})=\min\limits_{q\in \mathcal{Q}}\,J(q), \label{PControlJ} \\
& \text{find }\quad \overline{q}_{\alpha}\in \mathcal{Q}\quad\text{
such that }\quad J_{\alpha}(\overline{q}_{\alpha})=\min\limits_{q\in
\mathcal{Q}}\,J_{\alpha}(q),  \label{PControlalfaJ}
\end{align}
\endgroup
where the cost functionals $J:\mathcal{Q}{\rightarrow}{\Bbb
R}_{0}^{+}$ and $J_{\alpha}:\mathcal{Q}{\rightarrow}{\Bbb
R}_{0}^{+}$ are given by:
\begin{small}\begin{equation}\label{defJyJalfa}
\text{i) } J(q)=\frac{1}{2}\left\| u_{q}-z_{d}\right\|
_{\mathcal{H}}^{2}+\frac{M}{2}\left\| q\right\|_{\mathcal{Q}}^{2}
,\,\, \text{ii) } J_{\alpha}(q)=\frac{1}{2}\left\| u_{\alpha q}
-z_{d}\right\| _{\mathcal{H}}^{2}+\frac{M}{2}\left\|
q\right\|_{\mathcal{Q}}^{2}
\end{equation}\end{small}
with $z_d\in \mathcal{H}$ given and $M$ a positive constant.

\par In \cite{GT1}, the authors studied boundary optimal control problems
on the heat flux  $q$ in mixed elliptic problems  and they proved
existence, uniqueness and asymptotic behavior to the optimal
solutions, when the heat transfer coefficient goes to infinity.
Similar results were obtained in \cite{GT2} for simultaneous
distributed-boundary optimal control problems on the internal energy
$g$ and the heat flux $q$ in mixed elliptic problems. In \cite{MT},
convergence results were proved for heat conduction
problems in relation to distributed optimal control problems on the
internal energy $g$ as a control variable. Parabolic control problem with Robin boundary conditions are considered in \cite{BT,BT2,CGH,GT1,MT}. Other papers on the subject are \cite{BFR,SS,SA,WY}. In this paper, our main goal is to study the existence and uniqueness of solutions and the asymptotic behaviour of the optimal control problems (\ref{PControlJ}) and (\ref{PControlalfaJ}), when $\alpha\rightarrow \infty$. Moreover, motivated by \cite{GT} we try find explicit solutions for the optimal controls and a relationship between elliptic and parabolic boundary optimal control problems, when the time goes to infinity. In this way, we consider the following family of optimization problems on the heat flux dependent of a real parameter.
\newline
For fixed $q_{0}\in \mathcal{Q}$, we define
$\mathcal{Q}_{0}=\{\lambda q_{0}:\lambda \in \mathbb{R}\}\subset \mathcal{Q}$, and we formulate the following \emph{real Neumann parabolic boundary} optimal control problems, for each $T>0$ and $\alpha >0$:
\begin{align}
& \text{find }\quad \overline{\lambda }(T)\in \mathbb{R}\quad\text{
such that }\quad
H_{T}(\overline{\lambda}(T))=\min\limits_{\lambda \in \mathbb{R}}\,H_{T}(\lambda), \label{PControlH} \\
& \text{find }\quad \overline{\lambda}_{\alpha}(T)\in
\mathbb{R}\quad\text{ such that }\quad
H_{\alpha T}(\overline{\lambda}_{\alpha}(T))=\min\limits_{\lambda \in
\mathbb{R}}\,H_{\alpha T}(\lambda ), \label{PControlalfaH}
\end{align}
where
\begin{equation}\label{defHT}
H_{T}(\lambda)=J(\lambda  q_{0}) \quad \text{and}\quad
H_{\alpha T}(\lambda)=J_{\alpha}(\lambda q_{0}).
\end{equation}
Moreover, we consider the elliptic mixed problems $P$ and $P_{\alpha}$, for each $\alpha>0$, \cite{GT1,GT2}:
\begin{equation}\label{Stationary}
-\Delta u=g\,\ \text{ in }\Omega \ \
\,\,\,\,\,\,u\big|_{\Gamma _{1}}=b\,\,\,\,\,\,\,\,-\frac{\partial
u}{\partial n}\bigg|_{\Gamma _{2}}=q
\end{equation}
\begin{equation}\label{Stationaryalpha}
-\Delta u=g\,\ \text{ in }\Omega \ \ \,\,\,\,\,\,-\frac{\partial u}{\partial n%
}\bigg|_{\Gamma _{1}}=\alpha (u-b)\,\,\,\,\,\,\,\,-\frac{\partial
u}{\partial n}\bigg|_{\Gamma _{2}}=q
\end{equation}
whose variational equalities are given by
\begin{equation}\label{PvariacionalEstacionario}
a(u,v)=L(v),\quad \forall v\in V_{0}, \,\, u\in K
\end{equation}
\begin{equation}\label{PalfavariacionalEstacionario}
a_{\alpha}(u_{\alpha },v)=L_{\alpha}(v),\quad \forall v\in V, \,\, u_{\alpha}\in V
\end{equation}
with $K=v_{0}+V_{0}$ for a given $v_{0}=b$ in $\Gamma_{1}$. For fixed $q^{*}_{0}\in Q$, we define
$Q_{0}=\{\lambda q^{*}_{0}:\lambda \in \mathbb{R}\}\subset Q$, and we formulate the following \emph{real Neumann elliptic boundary} optimal control problems, for each $\alpha >0$:
\begin{align}
& \text{find }\quad \overline{\lambda} \in \mathbb{R}\quad\text{
such that }\quad
H(\overline{\lambda})=\min\limits_{\lambda  \in \mathbb{R}}\,H(\lambda ), \label{PControlHEstacionario} \\
& \text{find }\quad \overline{\lambda}_{\alpha}\in
\mathbb{R}\quad\text{ such that }\quad
H_{\alpha}(\overline{\lambda}_{\alpha})=\min\limits_{\lambda \in
\mathbb{R}}\,H_{\alpha}(\lambda ), \label{PControlalfaHEstacionario}
\end{align}
where
\begin{equation}\label{defH}
H(\lambda)=J^{*}(\lambda q^{*}_{0}) \quad \text{and}\quad
H_{\alpha}(\lambda )=J^{*}_{\alpha}(\lambda q^{*}_{0}),
\end{equation}
with $J^{*}:Q{\rightarrow}{\Bbb
R}_{0}^{+}$ and $J^{*}_{\alpha}:Q{\rightarrow}{\Bbb
R}_{0}^{+}$ given by \cite{GT1}:
\begin{small}\begin{equation}\label{defJyJalfaEstacionario}
\text{i) } J^{*}(q)=\frac{1}{2}\left\| u_{\infty q}-z_{d}\right\|
_{H}^{2}+\frac{M}{2}\left\| q\right\|_{Q}^{2}
,\, \text{ii) } J^{*}_{\alpha}(q)=\frac{1}{2}\left\| u_{\infty\alpha q}
-z_{d}\right\| _{H}^{2}+\frac{M}{2}\left\|
q\right\|_{Q}^{2}
\end{equation}\end{small}
where $u_{\infty q}$ and $u_{\infty\alpha q}$ are the unique solutions to the variational equalities (\ref{PvariacionalEstacionario}) and (\ref{PalfavariacionalEstacionario}) respectively, $z_d\in H$ is given and $M$ is a positive constant.

The paper is structured as follows. In Section 2, we consider Neumann boundary optimal control
problems on the heat flux $q$ for heat conduction problems
(\ref{P}),(\ref{PControlJ}) and (\ref{defJyJalfa}i) and Neumann parabolic boundary
optimal control problems on the heat flux $q$ for (\ref{Palfa}),
(\ref{PControlalfaJ}) and (\ref{defJyJalfa}ii), for each $\alpha
>0$. We prove existence and uniqueness of the optimal controls and we
give the first order optimality conditions. In Section 3, for fixed
$q$, we prove asymptotic estimates and convergence results for the
system states, the adjoint states and the optimal controls, when the
heat transfer coefficient goes to infinity. In Section 4, we prove
estimates between the optimal controls of the problems
(\ref{PControlJ}) and (\ref{PControlalfaJ}) and the second component
of the simultaneous optimal controls of the problems studied in
\cite{TaBoGa}. In Section 5, for the real Neumann parabolic boundary optimal control problems (\ref{PControlH}),
(\ref{PControlalfaH}), (\ref{PControlHEstacionario}) and (\ref{PControlalfaHEstacionario}) we prove the existence and uniqueness and we find explicit solutions for the optimal control $\overline{\lambda}(t)$, $\overline{\lambda_{\alpha}}(t)$, $\overline{\lambda}$ and $\overline{\lambda_{\alpha}}$,
respectively. Moreover, monotonicity properties with respect to the data are also studied. Finally, in Section 6, convergence results of the solutions to the problems (\ref{Pvariacional}) to the solution to the problem (\ref{PvariacionalEstacionario}) are obtained, when the parameter time $t\rightarrow \infty$.

\section{Boundary Optimal Control Problems for Systems $\mathbf{S}$ and $\mathbf{S_{\alpha}}$}

Here, we prove that the functionals $J$ and $J_{\alpha}$ are
strictly convex and G\^{a}teaux differentiable in $\mathcal{Q}$.
Moreover, we obtain the existence and uniqueness of the boundary
optimal controls
 $\overline{q}$ and  $\overline{q}_{\alpha}$ and we give the optimality
 conditions en terms of the adjoint states, for the optimal control problems (\ref{PControlJ}) and (\ref{PControlalfaJ}), respectively.

Following \cite{Li,MT, Tro}, we
 define the application $C:\mathcal{Q}\rightarrow L^2 (V_0)$ such that
$C(q)=u_q-u_0$, where $u_0$ is the solution of problem
(\ref{Pvariacional}) for $q=0$.

We consider $\Pi:\mathcal{Q}\times \mathcal{Q}\rightarrow
\mathbb{R}$ and $\mathcal{L}:\mathcal{Q}\rightarrow \mathbb{R}$
defined by the expressions
\[\Pi(q,\eta)=(C(q),C(\eta))_\mathcal{H}+M(q,\eta)_\mathcal{Q}\quad\forall q,\eta\in
\mathcal{Q}
\]
\[
\mathcal{L}(q)=(C(q),z_d-u_0)_\mathcal{H} \quad\forall q\in
\mathcal{Q}
\]
and we prove the following result
\begin{lem}\label{lema1}
\begin{enumerate}
                 \item [i)] $C$ is a linear and continuous functional.
                 \item [ii)] $\Pi$ is a bilinear, symmetric, continuous
                 form and coercive in $\mathcal{Q}$.
                 \item [iii)] $\mathcal{L}$ is linear and continuous functional in $\mathcal{Q}$.
                 \item [iv)] $J$ can be write as:
\[
                 J(q)=\frac{1}{2}\Pi(q,q)-\mathcal{L}(q)+\frac{1}{2}||u_0-z_d||^2_{\mathcal{H}}, \quad  \forall q\in
                 \mathcal{Q}.
                 \]
    \item [v)] $J$ is a strictly convex functional on
                 $\mathcal{Q}$, that is, \, $\forall q_1,q_2\in
                    \mathcal{Q},\,\forall t\in[0,1]$
                 \begin{equation*}\begin{split}
                    &(1-t)J(q_2)+tJ(q_1)-J((1-t)q_2+tq_1)\geq
                    \frac{Mt(1-t)}{2}||q_2-q_1||^2_{\mathcal{Q}}.
                 \end{split}\end{equation*}

                 \item [vi)] There exists a unique optimal control $\overline{q}\in
                 \mathcal{Q}$ such that
\[
                 J(\overline{q})=\min\limits_{q\in \mathcal{Q}}J(q).
                 \]
               \end{enumerate}
\end{lem}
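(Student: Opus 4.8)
The plan is to establish the six items of Lemma~\ref{lema1} in the order stated, since each builds on the previous ones, culminating in the existence and uniqueness of the optimal control via the Lax--Milgram theorem applied to a quadratic minimization problem.

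\textbf{Step (i): linearity and continuity of $C$.} First I would observe that the map $q \mapsto u_q$ is affine: if $u_q$ solves (\ref{Pvariacional}) with flux $q$ and $u_0$ solves it with $q = 0$ (same $g$, $b$, $v_b$), then $w := u_q - u_0$ satisfies $w(0) = 0$, $w \in L^2(0,T;V_0)$, $\dot w \in L^2(0,T;V_0')$, and $\langle \dot w(t), v\rangle + a(w(t),v) = -(q(t),v)_Q$ for all $v \in V_0$ — a linear problem in $q$. Hence $C(q) = w$ depends linearly on $q$. For continuity, I would test the equation for $w$ with $v = w(t)$, use the identity $\langle \dot w(t), w(t)\rangle = \tfrac12 \tfrac{d}{dt}\|w(t)\|_H^2$, the coercivity of $a$ on $V_0$ (Poincaré, since $|\Gamma_1|>0$), and the trace inequality $\|v\|_{L^2(\Gamma_2)} \le c\,\|v\|_V$ together with Young's inequality to absorb the trace term. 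Integrating in $t$ and applying Gronwall yields $\|w\|_{L^2(V_0)} \le c\,\|q\|_{\mathcal Q}$, i.e. $\|C(q)\|_{L^2(V_0)} \le c\,\|q\|_{\mathcal Q}$, which gives both continuity into $L^2(V_0)$ and, composing with the continuous embedding $L^2(V_0) \hookrightarrow \mathcal H$, into $\mathcal H$.

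\textbf{Steps (ii)--(iv): the bilinear form, the linear form, and the decomposition of $J$.} Bilinearity and symmetry of $\Pi$ are immediate from bilinearity of $C$ and of the inner products $(\cdot,\cdot)_{\mathcal H}$, $(\cdot,\cdot)_{\mathcal Q}$; continuity follows from Cauchy--Schwarz and the bound in (i): $|\Pi(q,\eta)| \le (c^2 + M)\,\|q\|_{\mathcal Q}\|\eta\|_{\mathcal Q}$; coercivity is the crucial structural point and comes for free from the regularization term, $\Pi(q,q) = \|C(q)\|_{\mathcal H}^2 + M\|q\|_{\mathcal Q}^2 \ge M\|q\|_{\mathcal Q}^2$ with $M>0$. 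Linearity of $\mathcal L$ is clear, and $|\mathcal L(q)| \le \|C(q)\|_{\mathcal H}\|z_d - u_0\|_{\mathcal H} \le c\,\|z_d-u_0\|_{\mathcal H}\,\|q\|_{\mathcal Q}$ gives continuity. For (iv), I would expand $\tfrac12\|u_q - z_d\|_{\mathcal H}^2 = \tfrac12\|C(q) + u_0 - z_d\|_{\mathcal H}^2 = \tfrac12\|C(q)\|_{\mathcal H}^2 + (C(q), u_0 - z_d)_{\mathcal H} + \tfrac12\|u_0 - z_d\|_{\mathcal H}^2$ and add $\tfrac M2\|q\|_{\mathcal Q}^2$, recognizing the first and last terms of the sum as $\tfrac12\Pi(q,q)$ and noting $(C(q), u_0 - z_d)_{\mathcal H} = -\mathcal L(q)$.

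\textbf{Steps (v)--(vi): strict convexity and existence/uniqueness.} For (v), using the decomposition from (iv) and the fact that $\Pi$ is a (nonnegative) quadratic form, a direct computation of $(1-t)J(q_2) + tJ(q_1) - J((1-t)q_2 + tq_1)$ collapses the linear and constant parts and leaves $\tfrac{t(1-t)}{2}\Pi(q_2 - q_1, q_2 - q_1) \ge \tfrac{Mt(1-t)}{2}\|q_2 - q_1\|_{\mathcal Q}^2$ by coercivity; the standard polarization-type identity $(1-t)\Pi(q_2,q_2) + t\Pi(q_1,q_1) - \Pi((1-t)q_2+tq_1,\,(1-t)q_2+tq_1) = t(1-t)\Pi(q_2-q_1,q_2-q_1)$ is the one routine computation here. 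Finally, (vi) follows from (ii)--(iv): minimizing $J$ over $\mathcal Q$ is equivalent to minimizing $q \mapsto \tfrac12\Pi(q,q) - \mathcal L(q)$, and by the Lax--Milgram theorem (or the classical theorem on minimization of a coercive continuous quadratic functional on a Hilbert space) there is a unique $\overline q \in \mathcal Q$ with $\Pi(\overline q, \eta) = \mathcal L(\eta)$ for all $\eta$, which is the unique minimizer; alternatively strict convexity from (v) plus coercivity (which forces $J(q) \to +\infty$ as $\|q\|_{\mathcal Q}\to\infty$) and weak lower semicontinuity give existence and uniqueness directly. I expect the only genuine obstacle to be the a priori estimate in Step (i) — specifically handling the $\Gamma_2$ trace term via the trace theorem and Young's inequality so that it can be absorbed into the coercive part of $a$ before invoking Gronwall; everything after that is formal manipulation of inner products and the standard quadratic-minimization machinery.
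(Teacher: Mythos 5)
Your proposal is correct and follows essentially the same route as the paper: the authors simply cite \cite{Li,MT} for items (i)--(iv) and (vi) and record the identity $(1-t)J(q_2)+tJ(q_1)-J((1-t)q_2+tq_1)=\frac{t(1-t)}{2}\bigl[\|u_{q_2}-u_{q_1}\|^2_{\mathcal{H}}+M\|q_2-q_1\|^2_{\mathcal{Q}}\bigr]$ for (v), which is exactly what your computation $\frac{t(1-t)}{2}\Pi(q_2-q_1,q_2-q_1)$ yields since $C(q_2-q_1)=u_{q_2}-u_{q_1}$. You have merely filled in the standard details (energy estimate for $C$, quadratic decomposition, Lax--Milgram) that the paper leaves to the references.
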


\begin{proof}

It follows from \cite{Li,MT} and
\begin{small}\[ (1-t)J(q_2)+tJ(q_1)-J((1-t)q_2+tq_1)=
                    \frac{t(1-t)}{2}\left[||u_{q_2}-u_{q_1}||^2_{\mathcal{H}}+M||q_2-q_1||^2_{\mathcal{Q}}\right],
\]\end{small}
$\forall q_1,q_2\in
                    \mathcal{Q},\,\,\forall t\in[0,1]$.
\end{proof}

Now, we define the adjoint state $p_q$ corresponding to the system (\ref{P})
for each $q\in \mathcal{Q}$, as the unique solution of the following
mixed parabolic problem:
\begin{equation*}
- \frac{\partial p_q}{\partial t}-\Delta p_q=u_q-z_d\,\ \text{in
}\Omega, \ \ \,\,\,\,\,\,p_q\big|_{\Gamma
_{1}}=0,\,\,\,\,\,\,\,\,\frac{\partial p_q}{\partial n}\bigg|_{\Gamma
_{2}}=0,\,\,\,\,\,\,\,\,p_q(T)=0,
\end{equation*}
whose variational formulation is given by
\begin{equation}\label{pvariacional}
\left\{
\begin{array}{l l}
p_q \in L^2(V_0), \,\, p_q(T)=0\quad\text{and } \dot{p}_q\in
L^2(V_0') \text{ such that }\\ -\langle \dot{p}_q(t),
v\rangle+a(p_q(t),v)=(u_q(t)-z_d(t),v)_H, \quad \forall v\in V_0,
\end{array}
\right.
\end{equation}
and we consider the following properties of the functional $J$, which follows \cite{Li,MT,TaBoGa}.
\begin{lem}\label{lema2}
\begin{enumerate}\item [i)] The adjoint state $p_q$ satisfies:
\[
(C(\eta),u_q-z_d)_\mathcal{H}=-(\eta,p_q)_\mathcal{Q},\quad \forall
q,\eta\in \mathcal{Q}.
\]
                 \item [ii)] The functional $J$ is
                 G\^{a}teaux differentiable and $J^{\prime}$ is given
                 by:
                   \begin{equation*}\begin{split}
                  \langle J^{\prime}(q),\eta-q\rangle
                  &=(u_\eta-u_q,u_q-z_d)_\mathcal{H}+M(q,\eta-q)_\mathcal{Q}\\&
                  = \Pi(q,\eta-q)-\mathcal{L}(\eta-q), \quad \forall q,\eta\in \mathcal{Q}.
                   \end{split}\end{equation*}
                 \item [iii)] The G\^{a}teaux derivative of $J$ can be write
                 as:
\[
                 J^{\prime}(q)=Mq-p_q\quad \forall q\in \mathcal{Q}.
                 \]
                 \item [iv)] The optimality condition for the optimal control problem (\ref{PControlJ}) is given by
\[
                 M \overline{q}-p_{\overline{q}}=0 \quad \text{in } \mathcal{Q}.
                 \]
                \end{enumerate}
\end{lem}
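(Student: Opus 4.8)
The plan is to handle the four items in order; item (i) carries the analytic content and items (ii)--(iv) follow by routine algebra using Lemma~\ref{lema1} and the Riesz representation in $\mathcal{Q}$.

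For (i), I would first use the linearity of $C$ from Lemma~\ref{lema1}(i) to note that $w=C(\eta)=u_\eta-u_0$ is the unique solution of the parabolic variational equality $\langle\dot w(t),v\rangle+a(w(t),v)=-(\eta(t),v)_Q$ for all $v\in V_0$, with $w(0)=0$. I would then test this identity with $v=p_q(t)\in V_0$, test the adjoint equation~\eqref{pvariacional} with $v=w(t)\in V_0$, and add the two resulting scalar equalities. The left-hand side becomes $\langle\dot w(t),p_q(t)\rangle+\langle\dot p_q(t),w(t)\rangle=\tfrac{d}{dt}(w(t),p_q(t))_H$, while the right-hand side is $-(\eta(t),p_q(t))_Q-(u_q(t)-z_d(t),w(t))_H$. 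Integrating over $[0,T]$ and using $w(0)=0$ and $p_q(T)=0$ to kill the boundary terms gives $0=-(\eta,p_q)_{\mathcal Q}-(C(\eta),u_q-z_d)_{\mathcal H}$, which is exactly the asserted identity.

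For (ii), Lemma~\ref{lema1}(iv) writes $J$ as the quadratic functional $\tfrac12\Pi(q,q)-\mathcal L(q)+\text{const}$ with $\Pi$ bilinear, symmetric and continuous and $\mathcal L$ linear and continuous, so $J$ is G\^ateaux differentiable with $\langle J'(q),\eta-q\rangle=\Pi(q,\eta-q)-\mathcal L(\eta-q)$; expanding the definitions of $\Pi$ and $\mathcal L$ and using $C(q)+u_0=u_q$ together with $C(\eta-q)=u_\eta-u_q$ yields the first displayed expression $(u_\eta-u_q,u_q-z_d)_{\mathcal H}+M(q,\eta-q)_{\mathcal Q}$. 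For (iii), I substitute $u_\eta-u_q=C(\eta-q)$ into that expression and apply (i) with $\eta$ replaced by $\eta-q$, obtaining $\langle J'(q),\eta-q\rangle=(Mq-p_q,\eta-q)_{\mathcal Q}$ for every $\eta\in\mathcal Q$; since $\eta-q$ ranges over all of the Hilbert space $\mathcal Q$, the Riesz identification gives $J'(q)=Mq-p_q$. Item (iv) is then immediate: $J$ is convex and G\^ateaux differentiable on the whole space $\mathcal Q$ with no constraints, so $\overline q$ is a minimizer if and only if $J'(\overline q)=0$, that is $M\overline q-p_{\overline q}=0$ in $\mathcal Q$.

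The only point requiring care is the integration by parts in time in (i): one must justify that $t\mapsto(w(t),p_q(t))_H$ is absolutely continuous on $[0,T]$ with $\tfrac{d}{dt}(w(t),p_q(t))_H=\langle\dot w(t),p_q(t)\rangle+\langle\dot p_q(t),w(t)\rangle$ for a.e.\ $t$. This is the standard Lions-type lemma for pairs of functions in $L^2(V_0)$ whose time derivatives lie in $L^2(V_0')$, and is available from the references already cited; everything else in the argument is bookkeeping with the bilinear forms $a$, $\Pi$ and the inner products of $\mathcal H$ and $\mathcal Q$.
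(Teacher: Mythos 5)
Your proof is correct and is precisely the standard duality argument that the paper itself omits, merely deferring to \cite{Li,MT,TaBoGa}: item (i) by testing the equation satisfied by $w=C(\eta)$ against $p_q$ and the adjoint equation against $w$, cancelling the symmetric $a(\cdot,\cdot)$ terms, and integrating in time with $w(0)=0$ and $p_q(T)=0$ (the Lions integration-by-parts lemma you invoke applies since $w,p_q\in L^2(V_0)$ with $\dot w,\dot p_q\in L^2(V_0')$), and items (ii)--(iv) by the quadratic structure from Lemma \ref{lema1} together with the Riesz identification and the unconstrained first-order condition. One wording slip: to obtain the identity you display you must \emph{subtract} the tested adjoint equation from the tested state equation (so the bilinear forms cancel), not add them; the resulting identity you write down is nonetheless the correct one, so nothing in the argument is affected.
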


Next, we define the application $C_{\alpha}:\mathcal{Q}\rightarrow
L^2 (V)$ such that $C_{\alpha}(q)=u_{\alpha q}-u_{\alpha 0}$, where
$u_{\alpha 0}$ is the solution of the variational problem
(\ref{Palfavariacional}) for $q=0$.

If we consider $\Pi_{\alpha}:\mathcal{Q}\times
\mathcal{Q}\rightarrow \mathbb{R}$ and
$\mathcal{L}_{\alpha}:\mathcal{Q}\rightarrow \mathbb{R}$ defined by
\[
\Pi_{\alpha}(q,\eta)=(C_{\alpha}(q),C_{\alpha}(\eta))_\mathcal{H}+
M(q,\eta)_\mathcal{Q}\quad\forall q,\eta\in \mathcal{Q}
\]
\[
\mathcal{L}_{\alpha}(q)=(C_{\alpha}(q),z_d-u_{\alpha 0})_\mathcal{H}
\quad\forall q\in \mathcal{Q}
\]
in similar way to Lemma \ref{lema1} and \ref{lema1} we have the following result.
\begin{lem}
(i) There exists a unique optimal control $\overline{q}_{\alpha}\in
                 \mathcal{Q}$ such that
\[ J_{\alpha}(\overline{q}_{\alpha})=\min\limits_{q\in \mathcal{Q}}J_{\alpha}(q).
                 \]
(ii) The Gateaux derivate of $J_{\alpha}$ can be write as:
\begin{equation}
J_{\alpha}^{\prime}(q)=Mq-p_{\alpha q}\quad \forall q\in \mathcal{Q}.
\end{equation}
and the optimality condition for the optimal control problem (\ref{PControlalfaJ}) is given by:
\begin{equation}
M \overline{q}_{\alpha}-p_{\alpha\overline{q}_{\alpha}}=0 \quad \text{in } \mathcal{Q}
\end{equation}
where the adjoint state $p_{\alpha q}$ corresponding to
(\ref{Palfa}) for each  $q\in \mathcal{Q},$ as the unique solution
of
\begin{equation}
- \frac{\partial p_{\alpha q}}{\partial t}-\Delta p_{\alpha
q}=u_{{\alpha}q}-z_d\,\ \text{in }\Omega, \,\, -\frac{\partial
p_{\alpha q}}{\partial n}\bigg|_{\Gamma _{1}}=\alpha p_{\alpha q},
\,\,\frac{\partial p_{\alpha q}}{\partial n}\bigg|_{\Gamma
_{2}}=0,\, p_{\alpha q}(T)=0\nonumber
\end{equation}
whose variational formulation is given by
\begin{equation}\label{palphavariacional1}
\left\{
\begin{array}{l l}
p_{\alpha q} \in L^2(V), \,\, p_{\alpha q}(T)=0\quad\text{and }\,
\dot{p}_{\alpha q}\in L^2(V') \text{ such that } \\ -\langle
\dot{p}_{\alpha q}(t), v\rangle+a_{\alpha}(p_{\alpha
q}(t),v)=(u_{\alpha q}(t)-z_d(t),v)_H, \, \forall v\in V
\end{array}
\right.
\end{equation}
for each $\alpha >0$.
\end{lem}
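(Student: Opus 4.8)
The plan is to follow exactly the scheme used for Lemmas~\ref{lema1} and~\ref{lema2}, transporting every step to the Robin setting, i.e. with the bilinear form $a_{\alpha}$ and the space $V$ in place of $a$ and $V_{0}$. First I would check that $C_{\alpha}$ is linear and continuous: linearity is immediate because $q\mapsto u_{\alpha q}$, as the solution map of the linear parabolic variational equality (\ref{Palfavariacional}) whose data $L_{\alpha}$ depends affinely on $q$, is affine, so $C_{\alpha}(q)=u_{\alpha q}-u_{\alpha 0}$ is its linear part; continuity follows from the standard a priori estimate for (\ref{Palfavariacional}) — using the coercivity of $a_{\alpha}$ on $V$ (valid since $|\Gamma_{1}|>0$) together with Gronwall's inequality — which gives $\|C_{\alpha}(q)\|_{L^{2}(V)}\le c\,\|q\|_{\mathcal{Q}}$. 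Consequently $\Pi_{\alpha}$ is bilinear, symmetric, continuous (by continuity of $C_{\alpha}$ and Cauchy--Schwarz) and coercive on $\mathcal{Q}$, since $\Pi_{\alpha}(q,q)\ge M\|q\|_{\mathcal{Q}}^{2}$; and $\mathcal{L}_{\alpha}$ is linear and continuous. Expanding $\|u_{\alpha q}-z_{d}\|_{\mathcal{H}}^{2}=\|C_{\alpha}(q)+(u_{\alpha 0}-z_{d})\|_{\mathcal{H}}^{2}$ yields $J_{\alpha}(q)=\tfrac12\Pi_{\alpha}(q,q)-\mathcal{L}_{\alpha}(q)+\tfrac12\|u_{\alpha 0}-z_{d}\|_{\mathcal{H}}^{2}$.

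For part (i), strict convexity follows from the identity
\[
(1-t)J_{\alpha}(q_{2})+tJ_{\alpha}(q_{1})-J_{\alpha}((1-t)q_{2}+tq_{1})=\frac{t(1-t)}{2}\Big[\|u_{\alpha q_{2}}-u_{\alpha q_{1}}\|_{\mathcal{H}}^{2}+M\|q_{2}-q_{1}\|_{\mathcal{Q}}^{2}\Big],
\]
valid for all $q_{1},q_{2}\in\mathcal{Q}$ and $t\in[0,1]$ (a consequence of the affinity of $q\mapsto u_{\alpha q}$), which in particular gives the quadratic lower bound $\tfrac{Mt(1-t)}{2}\|q_{2}-q_{1}\|_{\mathcal{Q}}^{2}$. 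Existence and uniqueness of $\overline{q}_{\alpha}$ then follow from the classical minimization theorem for a continuous, coercive, strictly convex quadratic functional on the Hilbert space $\mathcal{Q}$ (equivalently, since $\Pi_{\alpha}$ is coercive, Lax--Milgram applied to the Euler equation $\Pi_{\alpha}(\overline{q}_{\alpha},\eta)=\mathcal{L}_{\alpha}(\eta)$), see \cite{Li,Tro,MT}.

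For part (ii), I would first read off the Gateaux derivative from the quadratic expression for $J_{\alpha}$:
\[
\langle J_{\alpha}'(q),\eta-q\rangle=\Pi_{\alpha}(q,\eta-q)-\mathcal{L}_{\alpha}(\eta-q)=(u_{\alpha\eta}-u_{\alpha q},u_{\alpha q}-z_{d})_{\mathcal{H}}+M(q,\eta-q)_{\mathcal{Q}},\qquad\forall q,\eta\in\mathcal{Q}.
\]
The crucial step is then the adjoint identity $(C_{\alpha}(\eta),u_{\alpha q}-z_{d})_{\mathcal{H}}=-(\eta,p_{\alpha q})_{\mathcal{Q}}$. To prove it, observe that $w:=C_{\alpha}(\eta)=u_{\alpha\eta}-u_{\alpha 0}\in L^{2}(V)$, with $\dot{w}\in L^{2}(V')$, solves $\langle\dot{w}(t),v\rangle+a_{\alpha}(w(t),v)=-(\eta(t),v)_{Q}$ for all $v\in V$, with $w(0)=0$. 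Testing the adjoint equation (\ref{palphavariacional1}) with $v=w(t)$ and the equation for $w$ with $v=p_{\alpha q}(t)$, subtracting, and integrating over $(0,T)$, the symmetric terms $a_{\alpha}(p_{\alpha q},w)$ cancel (here the symmetry of $a_{\alpha}$, including the Robin boundary term $\alpha\int_{\Gamma_{1}}\cdot\,d\gamma$, is used), while the time-derivative terms combine via integration by parts into $\int_{0}^{T}\frac{d}{dt}\langle p_{\alpha q}(t),w(t)\rangle\,dt=\langle p_{\alpha q}(T),w(T)\rangle-\langle p_{\alpha q}(0),w(0)\rangle=0$ because $p_{\alpha q}(T)=0$ and $w(0)=0$; what survives is precisely $(u_{\alpha q}-z_{d},w)_{\mathcal{H}}=-(\eta,p_{\alpha q})_{\mathcal{Q}}$. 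Substituting this into the derivative formula gives $\langle J_{\alpha}'(q),\eta-q\rangle=(Mq-p_{\alpha q},\eta-q)_{\mathcal{Q}}$ for all $q,\eta\in\mathcal{Q}$, that is $J_{\alpha}'(q)=Mq-p_{\alpha q}$ in $\mathcal{Q}$. Finally, since the control set is the whole space $\mathcal{Q}$ and $J_{\alpha}$ is convex and Gateaux differentiable, $\overline{q}_{\alpha}$ is the minimizer if and only if $J_{\alpha}'(\overline{q}_{\alpha})=0$, which is the stated optimality condition $M\overline{q}_{\alpha}-p_{\alpha\overline{q}_{\alpha}}=0$ in $\mathcal{Q}$. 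I expect the main obstacle to be the rigorous justification of the integration by parts in time in the adjoint identity — ensuring that $t\mapsto\langle p_{\alpha q}(t),w(t)\rangle$ is absolutely continuous with the expected derivative, which relies on the regularity $p_{\alpha q},w\in L^{2}(V)$ and $\dot{p}_{\alpha q},\dot{w}\in L^{2}(V')$ and the standard integration-by-parts lemma for such functions — together with the careful bookkeeping of the Robin term that makes the antisymmetric combination vanish.
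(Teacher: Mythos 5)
Your proposal is correct and follows exactly the route the paper intends: after introducing $C_{\alpha}$, $\Pi_{\alpha}$ and $\mathcal{L}_{\alpha}$, the paper simply asserts that the result follows ``in similar way'' to Lemmas \ref{lema1} and \ref{lema2}, and your argument is precisely that adaptation (coercivity of $a_{\alpha}$ on $V$ thanks to $|\Gamma_{1}|>0$, the quadratic decomposition of $J_{\alpha}$, strict convexity, and the adjoint identity $(C_{\alpha}(\eta),u_{\alpha q}-z_{d})_{\mathcal{H}}=-(\eta,p_{\alpha q})_{\mathcal{Q}}$ obtained by testing the state and adjoint equations against each other and integrating by parts in time) written out in full detail. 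No gaps; if anything, your write-up is more complete than the paper's.
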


\section{Convergence  of  Neumann  Boundary  Optimal \linebreak Control Problems when $\mathbf{\alpha\rightarrow\infty}$}

Now, for fixed $q\in Q$, we obtain estimates on $u_{\alpha q}$ and
$p_{\alpha q}$ uniformly when $\alpha > 1$. Next, we prove strong
convergence for $q_{\alpha}$, $u_{\alpha q}$ and $p_{\alpha q}$, when $\alpha$ goes to infinity.

\begin{prop} \label{asintotico1} (i) If $u_{q}$ and $u_{\alpha q}$ are
the unique solutions to the variational equalities
(\ref{Pvariacional}) and (\ref{Palfavariacional}) respectively, we
have the estimation

\begin{small}\begin{equation}\label{estim1}
   ||\dot{u}_{\alpha q}||_{L^{2}(V'_{0})}+ ||u_{\alpha q}||_{L^{\infty}(H)}+
    ||u_{\alpha q}||_{L^{2}(V)}+\sqrt{(\alpha-1)}
    ||u_{\alpha q}-b||_{L^{\infty}(L^2(\Gamma_1))}\leq C
\end{equation}\end{small}
for all $\alpha>1$, where the constant $C$ depend only on the norms
$||\dot{u}_{q}||_{L^{2}(V'_{0})}$, $||\dot{u}_{q}||_{L^{2}(V')}$,
$||\nabla u_{q}||_{\mathcal{H}}$, $||u_{q}||_{L^{2}(V)}$,
$||u_{q}||_{L^{\infty}(H)}$, $||g||_{\mathcal{H}}$,
$||q||_{\mathcal{Q}}$ and the coerciveness constant
$\lambda_1$ of the bilinear form $a_{1}$.

(ii) For fixed $q\in \mathcal{Q}$ we have
$u_{\alpha q}\rightarrow u_q$ strongly in $L^2(V)\cap L^{\infty}(H)$
and $\dot{u}_{\alpha q}\rightarrow \dot{u}_q$ strongly in
$L^2(V_0')$, when $\alpha\rightarrow\infty$.
\end{prop}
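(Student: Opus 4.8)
The plan is to carry out the whole argument for the difference $w_{\alpha q}:=u_{\alpha q}-u_q$. Since $u_q\big|_{\Gamma_1}=b$ and $u_{\alpha q}(0)=u_q(0)=v_b$, we have $w_{\alpha q}(0)=0$ and the trace of $w_{\alpha q}$ on $\Gamma_1$ equals $u_{\alpha q}-b$. Starting from (\ref{Palfavariacional}) and using $u_q\big|_{\Gamma_1}=b$ to cancel the $\alpha$–terms, namely $a_{\alpha}(u_q,v)-a(u_q,v)=\alpha\int_{\Gamma_1}u_q v\,d\gamma=\alpha\int_{\Gamma_1}b\,v\,d\gamma=L_{\alpha}(t,v)-L(t,v)$, one obtains for every $v\in V$
\[
\langle\dot w_{\alpha q}(t),v\rangle+a_{\alpha}(w_{\alpha q}(t),v)=(g(t),v)_H-(q(t),v)_Q-\langle\dot u_q(t),v\rangle-a(u_q(t),v).
\]
This identity rests only on $u_q\big|_{\Gamma_1}=b$ and on $\dot u_q\in L^2(V')$, so no trace of a normal derivative on $\Gamma_1$ intervenes — which is precisely why the constant in (\ref{estim1}) is allowed to depend on $\|\dot u_q\|_{L^2(V')}$.

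To prove (i), I would test the identity for $w_{\alpha q}$ with $v=w_{\alpha q}(t)$. For $\alpha\ge 1$ one has $a_{\alpha}(w,w)=a_1(w,w)+(\alpha-1)\|w\|^2_{L^2(\Gamma_1)}\ge\lambda_1\|w\|^2_V+(\alpha-1)\|w\|^2_{L^2(\Gamma_1)}$, and $\langle\dot w_{\alpha q},w_{\alpha q}\rangle=\frac{1}{2}\frac{d}{dt}\|w_{\alpha q}\|_H^2$. Bounding the four right–hand terms by Cauchy–Schwarz (using the trace theorem for $(q,w_{\alpha q})_Q$) and Young's inequality, and absorbing the $\|w_{\alpha q}\|_V^2$–contributions into the left side, produces a differential inequality $\frac{d}{dt}\|w_{\alpha q}(t)\|_H^2\le\|w_{\alpha q}(t)\|_H^2+R(t)$ with $R\in L^1(0,T)$ depending only on $\|g\|_{\mathcal H}$, $\|q\|_{\mathcal Q}$, $\|\dot u_q\|_{L^2(V')}$, $\|\nabla u_q\|_{\mathcal H}$ and $\lambda_1$. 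Gronwall's lemma and $w_{\alpha q}(0)=0$ give the $L^{\infty}(H)$ bound; reinserting it into the sharper inequality and integrating in time gives the $L^2(V)$ bound together with $(\alpha-1)\int_0^T\|w_{\alpha q}(t)\|^2_{L^2(\Gamma_1)}\,dt\le C$, all constants independent of $\alpha>1$. Finally, restricting the identity for $w_{\alpha q}$ to $v\in V_0$ annihilates every boundary term and leaves $\langle\dot w_{\alpha q}(t),v\rangle+a(w_{\alpha q}(t),v)=0$, whence $\|\dot w_{\alpha q}(t)\|_{V_0'}\le\|\nabla w_{\alpha q}(t)\|_H$ and $\|\dot u_{\alpha q}\|_{L^2(V_0')}\le\|w_{\alpha q}\|_{L^2(V)}+\|\dot u_q\|_{L^2(V_0')}$. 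Adding these bounds to the corresponding norms of $u_q$ through $u_{\alpha q}=w_{\alpha q}+u_q$ gives (\ref{estim1}).

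For (ii), the uniformity in $\alpha$ of (\ref{estim1}) yields a subsequence with $u_{\alpha q}\rightharpoonup\eta$ weakly in $L^2(V)$, weakly$^{*}$ in $L^{\infty}(H)$, and $\dot u_{\alpha q}\rightharpoonup\dot\eta$ weakly in $L^2(V_0')$; moreover $\|u_{\alpha q}-b\|^2_{L^2(0,T;L^2(\Gamma_1))}\le C/(\alpha-1)\to 0$, so the trace of $u_{\alpha q}$ on $\Gamma_1$ converges strongly to $b$, which combined with the weak convergence of traces forces $\eta\big|_{\Gamma_1}=b$, i.e. $\eta-v_b\in L^2(V_0)$. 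Passing to the limit in (\ref{Palfavariacional}) tested with $v\in V_0$ (where the $\alpha$–term is absent) shows that $\eta$ solves (\ref{Pvariacional}), so $\eta=u_q$ by uniqueness and the whole family converges. Strong convergence then follows from the energy identity obtained by testing the identity for $w_{\alpha q}$ with $w_{\alpha q}$ and integrating,
\[
\frac{1}{2}\|w_{\alpha q}(t)\|_H^2+\lambda_1\int_0^t\|w_{\alpha q}\|_V^2\,ds\le\int_0^t\big[(g,w_{\alpha q})_H-(q,w_{\alpha q})_Q-\langle\dot u_q,w_{\alpha q}\rangle-a(u_q,w_{\alpha q})\big]\,ds,
\]
because $w_{\alpha q}\rightharpoonup 0$ weakly in $L^2(V)$ (hence weakly in $\mathcal H$ and, for traces, weakly in $L^2(0,T;L^2(\Gamma_2))$) while $g,q,\dot u_q,\nabla u_q$ are fixed, so the right–hand side tends to $0$ uniformly in $t$. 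Therefore $\|w_{\alpha q}\|_{L^{\infty}(H)}\to 0$, $\|w_{\alpha q}\|_{L^2(V)}\to 0$, and $\|\dot w_{\alpha q}\|_{L^2(V_0')}\le\|w_{\alpha q}\|_{L^2(V)}\to 0$, which is the assertion.

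The main obstacle is the handling of the Robin portion $\Gamma_1$: writing the equation for $w_{\alpha q}$ so that the normal flux of $u_q$ is carried by $\dot u_q$, $g$, $q$ and $a(u_q,\cdot)$ rather than by a pointwise trace (this is what keeps the constant dependent only on the norms listed), and, in part (ii), promoting the smallness of $\sqrt{\alpha-1}\,(u_{\alpha q}-b)$ on $\Gamma_1$ to the boundary identity $\eta\big|_{\Gamma_1}=b$ needed before uniqueness for (\ref{Pvariacional}) can be invoked. The energy method above delivers the $\Gamma_1$–norm integrated in time; the $L^{\infty}$–in–time form stated in (\ref{estim1}) is recovered by combining it with the extra time regularity of $w_{\alpha q}$ furnished by the smoothness hypotheses on the data.
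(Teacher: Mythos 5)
Your proposal is correct and follows essentially the same route as the paper: an energy estimate for the difference $u_{\alpha q}-u_q$ tested against itself, using $u_q\big|_{\Gamma_1}=b$ to cancel the $\alpha$-terms and the coercivity of $a_1$ plus Young's inequality to extract the $L^\infty(H)$, $L^2(V)$ and weighted $\Gamma_1$-bounds, then the restriction to $v\in V_0$ for the $\dot u$ estimate, and for (ii) weak compactness, identification of the limit via the boundary estimate and uniqueness for (\ref{Pvariacional}), followed by the same energy identity to upgrade to strong convergence. The only cosmetic differences (an unnecessary Gronwall step, and your explicit remark that the proof naturally yields the $L^2$-in-time rather than $L^\infty$-in-time boundary norm) do not change the argument.
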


\begin{proof}
Taking $v=u_{\alpha q}(t)-u_q(t)\in V$ in the variational equation
(\ref{Palfavariacional}), taking into account that $u_q(t)\big|_{\Gamma_1}=b$, by using Young's inequality and integrating between $[0,T]$, we obtain
\begin{equation*}\begin{split}
&\quad \frac{1}{2}||u_{\alpha
q}(T)-u_q(T)||_H^2+\frac{\lambda_1}{2}||u_{\alpha
q}-u_q||^2_{L^2(V)}+(\alpha-1)||u_{\alpha
q}-b||^2_{L^2(L^2(\Gamma_1))}
\\& \leq
\frac{2}{\lambda_1}[||g||_{\mathcal{H}}^2+\|\gamma_{0}\|^{2}||q||^2_{\mathcal{Q}}
+||\nabla u_{q}||^2_{\mathcal{H}}+|| \dot{u}_{q}||^2_{L^2(V')}],
\end{split}\end{equation*}
where $\gamma_{0}$ is the trace operator on $\Gamma$.

Here, we prove that there exists a positive constant $K$ independent of $\alpha$ and it depends of
\begin{small}\[
K=K(\lambda_{1},||u_q||_{L^{\infty}(H)},||u_q||_{L^2(V)},||g||_{\mathcal{H}},||q||_{\mathcal{Q}},||\nabla
u_{q}||_{\mathcal{H}},|| \dot{u}_{q}||_{L^2(V')})
\]\end{small}
such that for all $\alpha> 1$, we have:
\begin{equation}\label{K}
\quad||u_{\alpha q}||_{L^{\infty}(H)}+||u_{\alpha
q}||_{L^2(V)}+\sqrt{(\alpha-1)}||u_{\alpha
q}-b||_{L^2(L^2(\Gamma_1))}\leq K.
\end{equation}
Next, taking $v\in V_{0}$ in the variational equality
(\ref{Palfavariacional}) and subtracting the variational equality (\ref{Pvariacional}), we
have
\begin{equation*}
( \dot{u}_{\alpha q}(t)-\dot{u}_{q}(t), v)_{H} \leq
||u_{q}(t)-u_{\alpha q}(t)||_{V}||v||_{V_0}\quad\forall v\in V_0,
\end{equation*}
and integrating in $[0,T]$, we obtain
$||\dot{u}_{\alpha q}-\dot{u}_{q}||_{L^2(V_0')} \leq
||u_{q}-u_{\alpha q}||_{L^2(V)}$. Next, by using (\ref{K}), we have that there exists a positive
constant $C=C(K,|| \dot{u}_{q}||_{L^2(V'_{0})})$
such that (\ref{estim1}) holds.

(ii) Let fixed $q\in \mathcal{Q}$ be, we consider a sequence $\{u_{\alpha_n
q}\} $ in $L^2(V)\cap L^{\infty}(H)$ and by estimation
(\ref{estim1}), we have that $||u_{\alpha_{n} q}||_{L^2(V)}\leq C$ and $||\dot{u}_{\alpha_n q}||_{L^2(V_0')}\leq C$, therefore, there exists a subsequence $\{u_{\alpha_{n}q}\}$ which is
weakly convergent to $w_q\in L^2(V)$ and weakly* in $L^{\infty}(H)$
and there exists a subsequence $\{\dot{u}_{\alpha_{n}q}\}$ which is
weakly convergent to $\dot{w}_q\in L^2(V_0')$. Now, from  the third term of left hand side of (\ref{K}) and the weak
lower semicontinuity of the norm in $L^2(L^2(\Gamma_{1}))$, we have that $w_q=b$ on $\Gamma_1$
and therefore $w_{q}-v_{b}\in L^2(V_{0})$. Next, we prove that
$w_{q}$ satisfies $
\langle \dot{w}_{q}(t), v\rangle+a(w_{q}(t),v)=L(t,v)$, $\forall v\in V_0$ and $w_{q}(0)=v_b$ with $\dot{w}_{q}\in L^2(V_0')$. Therefore, by
uniqueness of the solution of the variational problem
(\ref{Pvariacional}), we obtain $w_{q}=u_{q}$. That is, when
$\alpha\rightarrow\infty$ we have
$$u_{\alpha q}\rightharpoonup u_q\,\,\text{in}\,\,L^{2}(V),\quad u_{\alpha q}\stackrel{*}{\rightharpoonup} u_q\,\,
\text{in}\,\,L^{\infty}(H)\quad\text{and}\quad \dot{u}_{\alpha
q}\rightharpoonup \dot{u}_q\,\,\text{in}\,\,L^{2}(V_0').$$
Now, we have
\begin{small}\begin{equation*}\begin{split}
&\quad\frac{1}{2}||u_{\alpha q}(T)-u_q(T)||_H^2+\lambda_1||u_{\alpha
q}-u_q||^2_{L^2(V)}+(\alpha-1)||u_{\alpha
q}-u_q||^2_{L^2(L^2(\Gamma_1))}
\\&\leq \int\limits_0^T \{
L(t,u_{\alpha q}(t)-u_q(t))-a(u_{q}(t),u_{\alpha
q}(t)-u_q(t))-\langle \dot{u}_{q}(t),u_{\alpha q}(t)-u_q(t)\rangle \}
dt
\end{split}\end{equation*}\end{small}
and by using the weak convergence of $u_{\alpha q}$ to $u_q$, we
prove the strong convergence in $L^2(V)$. Next, taking into account
that
\begin{small}\begin{equation*}\begin{split}
\quad||u_{\alpha q}-u_q||^2_{L^2(L^2(\Gamma_1))} &\leq
\frac{1}{\alpha-1}\int\limits_0^T \{L(t,u_{\alpha
q}(t)-u_q(t))-a(u_{q}(t),u_{\alpha q}(t)-u_q(t))\\&-\langle
\dot{u}_{q}(t),u_{\alpha q}(t)-u_q(t)\rangle \} dt
\end{split}\end{equation*}\end{small}
and the weak convergence of $u_{\alpha q}$ to $u_q$, we prove the
strong convergence in $L^2(L^2(\Gamma_1))$. Now, from the
variational equalities (\ref{Pvariacional}) and (\ref{Palfavariacional}), we have
\begin{equation*}
||\dot{u}_{\alpha q}-\dot{u}_{q}||^2_{L^2(V_0')} \leq
||u_{q}-u_{\alpha q}||^2_{L^2(V)}\rightarrow
0,\quad \text{when}\quad \alpha \rightarrow
\infty.
\end{equation*}
We deduce that $\dot{u}_{\alpha q}$ is strong convergent to
$\dot{u}_{q}$ en $L^2(V_0')$. Finally, we have
\begin{small}\begin{equation*}\begin{split}||u_{\alpha q}-u_q||^2_{L^{\infty}(H)}&\leq 2(||g||_{\mathcal{H}}
+\|\gamma_{0}\|||q||_{\mathcal{Q}} +|| u_{q}||_{L^2(V_0)}+
||\dot{u}_{q}||_{\mathcal{H}}) ||u_{\alpha
q}-u_q||_{L^2(V)}\end{split}\end{equation*} \end{small}
and from the strong
convergence of $u_{\alpha q}$ to $u_q$ in $L^2(V)$, we prove that
$u_{\alpha q}$ is strongly convergent to $u_q$ in $L^\infty(H)$, when $\alpha \rightarrow \infty$.
\end{proof}

\begin{prop}\label{asintotico2} (i) If $p_{q}$ and $p_{\alpha q}$ are
the unique solutions to the variational equalities
(\ref{pvariacional}) and (\ref{palphavariacional1}) respectively, we
have the estimation
\begin{equation}\label{estimacion2}
    ||\dot{p}_{\alpha q}||_{L^{2}(V'_{0})}+||p_{\alpha q}||_{L^{\infty}(H)}+
    ||p_{\alpha q}||_{L^{2}(V)}+\sqrt{(\alpha-1)}
    ||p_{\alpha q}||_{L^{2}(L^2(\Gamma_1))}\leq C
\end{equation}
for all $\alpha>1$, where the constant $C$ depend of the norms
$||\dot{p}_{q}||_{L^{2}(V'_{0})}$,\linebreak $||\dot{p}_{q}||_{L^{2}(V')}$,
$||\nabla p_{q}||_{\mathcal{H}}$, $||p_{q}||_{L^{2}(V)}$,
$||p_{q}||_{L^{\infty}(H)}$, $||g||_{\mathcal{H}}$,
$||q||_{\mathcal{Q}}$, $||z_d||_{\mathcal{H}}$, \linebreak
$||\dot{u}_{q}||_{L^{2}(V')}$, $||\nabla u_{q}||_{\mathcal{H}}$,
$||u_{q}||_{L^{2}(V)}$, $||u_{q}||_{L^{\infty}(H)}$ and of the
coerciveness constant $\lambda_1$.

(ii) For fixed $q\in \mathcal{Q}$, we have that
$p_{\alpha q}\rightarrow p_q$ strongly in  $L^2(V)\cap
L^{\infty}(H)$ and $\dot{p}_{\alpha q}\rightarrow \dot{p}_q$
strongly in $L^2(V_0')$, when $\alpha\rightarrow\infty$.
\end{prop}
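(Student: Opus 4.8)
The plan is to follow the proof of Proposition~\ref{asintotico1} almost verbatim, now for the backward parabolic adjoint problems (\ref{pvariacional}) and (\ref{palphavariacional1}); the one genuinely new ingredient is that the source term $u_{\alpha q}-z_d$ of (\ref{palphavariacional1}) depends on $\alpha$, so it cannot be treated as fixed data and must be controlled through Proposition~\ref{asintotico1}.

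For part~(i) I would test (\ref{palphavariacional1}) with $v=p_{\alpha q}(t)-p_q(t)\in V$. Since $p_q(t)\big|_{\Gamma_1}=0$, the boundary integral in $a_\alpha$ becomes $\alpha\,\|p_{\alpha q}(t)\|_{L^2(\Gamma_1)}^2$; writing $a(p_{\alpha q},v)=\|\nabla(p_{\alpha q}-p_q)\|_H^2+a(p_q,v)$ and moving $\langle\dot p_q,v\rangle$ and $a(p_q,v)$ to the right, splitting $\alpha=1+(\alpha-1)$, using the coercivity of $a_1$ on $V$ with constant $\lambda_1$ (note that $(p_{\alpha q}-p_q)\big|_{\Gamma_1}=p_{\alpha q}\big|_{\Gamma_1}$), integrating over $[t,T]$ and using $p_{\alpha q}(T)=p_q(T)=0$, one obtains
$$
\tfrac12\|p_{\alpha q}(t)-p_q(t)\|_H^2+\lambda_1\!\int_t^T\!\!\|p_{\alpha q}-p_q\|_V^2+(\alpha-1)\!\int_t^T\!\!\|p_{\alpha q}\|_{L^2(\Gamma_1)}^2\le\int_t^T\!\!\big[(u_{\alpha q}-z_d,p_{\alpha q}-p_q)_H+\langle\dot p_q,p_{\alpha q}-p_q\rangle-a(p_q,p_{\alpha q}-p_q)\big].
$$
Estimating the right-hand side by Cauchy--Schwarz and Young's inequality, using $\|u_{\alpha q}\|_{\mathcal H}\le C$ for $\alpha>1$ from Proposition~\ref{asintotico1}(i), and taking the supremum over $t\in[0,T]$, yields bounds on $\|p_{\alpha q}-p_q\|_{L^\infty(H)}$, $\|p_{\alpha q}-p_q\|_{L^2(V)}$ and $\sqrt{\alpha-1}\,\|p_{\alpha q}\|_{L^2(L^2(\Gamma_1))}$ depending only on the listed norms and on $\lambda_1$, hence (by the triangle inequality with $p_q$) the first three summands of (\ref{estimacion2}). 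For the time derivative, subtracting (\ref{pvariacional}) from (\ref{palphavariacional1}) tested against $v\in V_0$ — where the $\alpha$-boundary term disappears — gives $|\langle\dot p_{\alpha q}(t)-\dot p_q(t),v\rangle|\le\big(\|\nabla(p_{\alpha q}(t)-p_q(t))\|_H+\|u_{\alpha q}(t)-u_q(t)\|_H\big)\|v\|_{V_0}$, so $\|\dot p_{\alpha q}-\dot p_q\|_{L^2(V_0')}\le C\big(\|p_{\alpha q}-p_q\|_{L^2(V)}+\|u_{\alpha q}-u_q\|_{\mathcal H}\big)$, which together with $\|\dot p_q\|_{L^2(V_0')}$ finishes (\ref{estimacion2}).

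For part~(ii) I would argue exactly as in Proposition~\ref{asintotico1}(ii): along a sequence $\alpha_n\to\infty$, the estimate (\ref{estimacion2}) gives a subsequence with $p_{\alpha_n q}\rightharpoonup\pi_q$ in $L^2(V)$, weakly-$*$ in $L^\infty(H)$ and $\dot p_{\alpha_n q}\rightharpoonup\dot\pi_q$ in $L^2(V_0')$; from $\sqrt{\alpha_n-1}\,\|p_{\alpha_n q}\|_{L^2(L^2(\Gamma_1))}\le C$ and weak lower semicontinuity of the $L^2(L^2(\Gamma_1))$-norm, the trace of $\pi_q$ on $\Gamma_1$ vanishes, so $\pi_q\in L^2(V_0)$, and $\pi_q(T)=0$. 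Testing (\ref{palphavariacional1}) with $v\in V_0$ removes the $\alpha$-term, and letting $\alpha_n\to\infty$ using the weak convergences together with $u_{\alpha_n q}\to u_q$ strongly in $\mathcal H$ (Proposition~\ref{asintotico1}(ii)) shows that $\pi_q$ solves (\ref{pvariacional}); by uniqueness $\pi_q=p_q$, and since the limit is independent of the subsequence the whole family converges weakly. Strong convergence then follows from the inequality above: its right-hand side tends to $0$, because each of the three terms is a duality pairing of a strongly convergent factor ($u_{\alpha q}-z_d\to u_q-z_d$ in $\mathcal H$, or the fixed $\dot p_q$ and $\nabla p_q$) against the weakly null $p_{\alpha q}-p_q\rightharpoonup 0$ in $L^2(V)$; hence $\|p_{\alpha q}-p_q\|_{L^2(V)}\to0$, and taking the supremum in $t$ also $\|p_{\alpha q}-p_q\|_{L^\infty(H)}\to0$; finally $\|\dot p_{\alpha q}-\dot p_q\|_{L^2(V_0')}\le C(\|p_{\alpha q}-p_q\|_{L^2(V)}+\|u_{\alpha q}-u_q\|_{\mathcal H})\to0$.

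The main obstacle, and the only place the argument is not routine, is part~(i): unlike in Proposition~\ref{asintotico1} the adjoint forcing is not fixed data, so the uniform-in-$\alpha$ estimate is truly conditional on Proposition~\ref{asintotico1}(i); one must also be careful with the backward-in-time integration and the terminal conditions (so that the $H$-norm at $t=T$, not at $t=0$, drops out), and notice that the coercivity used is that of $a_1$ on the whole of $V$, not of $a$ on $V_0$, since $p_{\alpha q}-p_q$ does not vanish on $\Gamma_1$.
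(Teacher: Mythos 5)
Your proposal is correct and follows essentially the same route as the paper, which simply states that (\ref{estimacion2}) follows "with an analogous reasoning to Proposition \ref{asintotico1}" and then gives the same weak-compactness, trace-identification, and uniqueness argument for part (ii). In fact you supply more detail than the paper does, and your observation that the adjoint forcing $u_{\alpha q}-z_d$ depends on $\alpha$ and must itself be controlled via Proposition \ref{asintotico1}(i) is precisely the point the paper's "analogous reasoning" glosses over.
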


\begin{proof}
Let fixed $q\in \mathcal{Q}$ be, the estimation (\ref{estimacion2}) follows with an analogous reasoning to
Proposition \ref{asintotico1}. We have that there exists a subsequence $\{p_{\alpha_{n}q}\}$ which
is weakly convergent to $\eta_q\in L^2(V)$ and weakly* in
$L^{\infty}(H)$. From the weak semicontinuity of the norm, we have
that $\eta_q=0$ on $\Gamma_1$ and therefore $\eta_q\in L^2(V_{0})$.
Moreover, $\eta_q$ satisfies
\begin{equation*}
-\langle \dot{\eta}_{q}(t), v\rangle
+a(\eta_q(t),v)=(u_{q}(t)-z_d(t),v)_H \quad \forall v\in V_{0}
\end{equation*}
and $\eta_q(T)=0$ with $\dot{\eta}_{q}\in L^2(V_0')$. Therefore, by
uniqueness of the solution of the variational problem
(\ref{pvariacional}), we obtain $\eta_{q}=p_{q}$ and when
$\alpha\rightarrow\infty$ we have
$$p_{\alpha q}\rightharpoonup p_q\,\,\text{in}\,\,L^{2}(V),\quad p_{\alpha q}\stackrel{*}{\rightharpoonup}
p_q\,\,\text{in}\,\,L^{\infty}(H)\quad \text{and} \quad
\dot{p}_{\alpha q}{\rightharpoonup}
\dot{p}_q\,\,\text{in}\,\,L^{2}(V_0').$$ Finally, the strong
convergence of $p_{\alpha q}$ to  $p_q$ in  $L^2(V)\cap
L^{\infty}(H)$ and of $\dot{p}_{\alpha q}$ to $\dot{p}_q$ in norm
$L^2(V_0')$ is obtained in a similar way that in Proposition \ref{asintotico1}.
\end{proof}

Now, we consider the boundary optimal control problems
(\ref{PControlJ}) and (\ref{PControlalfaJ}) and our goal is to prove
the following theorem:
\begin{thm} Let $\overline{q}$ and $\overline{q}_{\alpha}$ the unique solutions of the optimal control problems (\ref{PControlJ}) and (\ref{PControlalfaJ}), respectively. Then, we have that
$\overline{q}_{\alpha}\rightarrow \overline{q}$ strongly in
$\mathcal{Q}$, when the parameter $\alpha\rightarrow \infty$. Moreover, the system state and the adjoint state
satisfy
$(u_{\alpha\overline{q}_{\alpha}},\dot{u}_{\alpha\overline{q}_{\alpha}})
\rightarrow (u_{\overline{q}},\dot{u}_{\overline{q}})$ and
$(p_{\alpha\overline{q}_{\alpha}},\dot{p}_{\alpha\overline{q}_{\alpha}})\rightarrow
(p_{\overline{q}},\dot{p}_{\overline{q}})$ strongly in $L^2(V)\times
L^2(V_0')$.
\end{thm}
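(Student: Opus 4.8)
The plan is to first prove the strong convergence of the optimal controls and then transfer it to the states and adjoint states, by splitting each difference into a part with the control held fixed (where $\alpha\to\infty$), which is covered by Propositions~\ref{asintotico1} and~\ref{asintotico2}, and a part with $\alpha$ held fixed (where only the control is perturbed), which is controlled by continuous--dependence estimates for the linear operators $C_{\alpha}$ and for the $\alpha$--adjoint solution operator that are \emph{uniform in} $\alpha>1$; the uniformity is available because $a_{\alpha}(v,v)\ge a_{1}(v,v)\ge\lambda_{1}\|v\|_{V}^{2}$ for every $\alpha\ge1$.

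First, for the controls. As in the $\alpha$--versions of Lemmas~\ref{lema1} and~\ref{lema2} one has $J_{\alpha}'(q)=Mq-p_{\alpha q}$ and $\langle J_{\alpha}'(q)-J_{\alpha}'(\eta),\zeta\rangle=\Pi_{\alpha}(q-\eta,\zeta)$ for all $q,\eta,\zeta\in\mathcal{Q}$; choosing $\zeta=q-\eta$ and using $\Pi_{\alpha}(q-\eta,q-\eta)=\|C_{\alpha}(q-\eta)\|_{\mathcal{H}}^{2}+M\|q-\eta\|_{\mathcal{Q}}^{2}$ yields the monotonicity identity
\[
(p_{\alpha q}-p_{\alpha\eta},\,q-\eta)_{\mathcal{Q}}=-\|C_{\alpha}(q-\eta)\|_{\mathcal{H}}^{2}\le0 .
\]
Then, writing $p_{\alpha\overline{q}_{\alpha}}-p_{\overline{q}}=(p_{\alpha\overline{q}_{\alpha}}-p_{\alpha\overline{q}})+(p_{\alpha\overline{q}}-p_{\overline{q}})$ and invoking the optimality conditions $M\overline{q}_{\alpha}=p_{\alpha\overline{q}_{\alpha}}$, $M\overline{q}=p_{\overline{q}}$,
\[
M\|\overline{q}_{\alpha}-\overline{q}\|_{\mathcal{Q}}^{2}=(p_{\alpha\overline{q}_{\alpha}}-p_{\alpha\overline{q}},\,\overline{q}_{\alpha}-\overline{q})_{\mathcal{Q}}+(p_{\alpha\overline{q}}-p_{\overline{q}},\,\overline{q}_{\alpha}-\overline{q})_{\mathcal{Q}}\le\|\gamma_{0}\|\,\|p_{\alpha\overline{q}}-p_{\overline{q}}\|_{L^{2}(V)}\,\|\overline{q}_{\alpha}-\overline{q}\|_{\mathcal{Q}},
\]
since the first term is $\le0$ by the identity above (with $q=\overline{q}_{\alpha}$, $\eta=\overline{q}$) and the second is estimated through the trace on $\Gamma_{2}$. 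Dividing (the case $\overline{q}_{\alpha}=\overline{q}$ being trivial) gives $M\|\overline{q}_{\alpha}-\overline{q}\|_{\mathcal{Q}}\le\|\gamma_{0}\|\,\|p_{\alpha\overline{q}}-p_{\overline{q}}\|_{L^{2}(V)}$, whose right--hand side tends to $0$ by Proposition~\ref{asintotico2}(ii) applied to the \emph{fixed} control $\overline{q}$. Hence $\overline{q}_{\alpha}\to\overline{q}$ strongly in $\mathcal{Q}$.

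Next, for the states, split $u_{\alpha\overline{q}_{\alpha}}-u_{\overline{q}}=C_{\alpha}(\overline{q}_{\alpha}-\overline{q})+(u_{\alpha\overline{q}}-u_{\overline{q}})$. The second summand, with its time derivative, converges to $0$ in $L^{2}(V)$ resp.\ $L^{2}(V_{0}')$ by Proposition~\ref{asintotico1}(ii). For the first, $C_{\alpha}(\zeta)$ solves (\ref{Palfavariacional}) with $g=b=v_{b}=0$ and flux $\zeta$; testing with $C_{\alpha}(\zeta)(t)$, using $a_{\alpha}(v,v)\ge\lambda_{1}\|v\|_{V}^{2}$ and Young's inequality gives $\|C_{\alpha}(\zeta)\|_{L^{2}(V)}\le c\|\zeta\|_{\mathcal{Q}}$ with $c$ independent of $\alpha>1$, and restricting the same equation to test functions in $V_{0}$ (so the $\Gamma_{1}$ term disappears) gives $\|\dot{C_{\alpha}(\zeta)}\|_{L^{2}(V_{0}')}\le c\|\zeta\|_{\mathcal{Q}}$; both tend to $0$ with $\zeta=\overline{q}_{\alpha}-\overline{q}$. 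For the adjoint states, split $p_{\alpha\overline{q}_{\alpha}}-p_{\overline{q}}=(p_{\alpha\overline{q}_{\alpha}}-p_{\alpha\overline{q}})+(p_{\alpha\overline{q}}-p_{\overline{q}})$; the second pair converges by Proposition~\ref{asintotico2}(ii), while $\delta p:=p_{\alpha\overline{q}_{\alpha}}-p_{\alpha\overline{q}}$ solves (\ref{palphavariacional1}) with right--hand side $u_{\alpha\overline{q}_{\alpha}}-u_{\alpha\overline{q}}=C_{\alpha}(\overline{q}_{\alpha}-\overline{q})$, so the same uniform energy estimate, run backward in time, gives $\|\delta p\|_{L^{2}(V)}+\|\dot{\delta p}\|_{L^{2}(V_{0}')}\le c'\|C_{\alpha}(\overline{q}_{\alpha}-\overline{q})\|_{\mathcal{H}}\le c''\|\overline{q}_{\alpha}-\overline{q}\|_{\mathcal{Q}}\to0$.

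The main obstacle is precisely that Propositions~\ref{asintotico1} and~\ref{asintotico2} are proved for a control held fixed, whereas here $\overline{q}_{\alpha}$ moves with $\alpha$. What makes the estimate close is the nonpositivity of $(p_{\alpha\overline{q}_{\alpha}}-p_{\alpha\overline{q}},\overline{q}_{\alpha}-\overline{q})_{\mathcal{Q}}$, which lets the optimality conditions reduce the control estimate to the already established fixed--control convergence of the adjoint state, together with the fact that the continuous--dependence constants for $C_{\alpha}$ and for the $\alpha$--adjoint problem are uniform in $\alpha>1$ thanks to the domination $a_{\alpha}\ge a_{1}$.
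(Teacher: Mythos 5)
Your proof is correct, but it follows a genuinely different route from the paper. The paper argues by compactness: it first derives uniform (in $\alpha>1$) bounds on $\overline{q}_{\alpha}$, $u_{\alpha\overline{q}_{\alpha}}$, $p_{\alpha\overline{q}_{\alpha}}$ and their time derivatives, extracts weak limits, identifies them via the optimality condition and uniqueness of the optimal control, and then upgrades weak to strong convergence through the sandwich $J(\overline{q})\leq\liminf_{\alpha}J_{\alpha}(\overline{q}_{\alpha})\leq\limsup_{\alpha}J_{\alpha}(\overline{q}_{\alpha})\leq\lim_{\alpha}J_{\alpha}(q)=J(q)$, which forces convergence of the norms and hence of $(\overline{q}_{\alpha},u_{\alpha\overline{q}_{\alpha}})$ in $\mathcal{Q}\times\mathcal{H}$, finishing with energy estimates for the $L^{2}(V)$ and $L^{2}(V_{0}')$ convergences. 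You instead exploit the monotonicity identity $(p_{\alpha q}-p_{\alpha\eta},q-\eta)_{\mathcal{Q}}=-\|C_{\alpha}(q-\eta)\|_{\mathcal{H}}^{2}\leq 0$ (a consequence of the convexity structure already encoded in $\Pi_{\alpha}$ and Lemma \ref{lema2}) together with the two optimality conditions to obtain the quantitative bound $M\|\overline{q}_{\alpha}-\overline{q}\|_{\mathcal{Q}}\leq\|\gamma_{0}\|\,\|p_{\alpha\overline{q}}-p_{\overline{q}}\|_{L^{2}(V)}$, which reduces everything to the fixed-control convergence of Propositions \ref{asintotico1} and \ref{asintotico2}; the transfer to the states and adjoints then only needs continuous-dependence constants that are uniform in $\alpha$ because $a_{\alpha}\geq a_{1}$ for $\alpha\geq 1$, which you verify correctly. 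Your argument is shorter, avoids weak compactness and the liminf/limsup step entirely, and has the added benefit of producing an explicit error estimate for $\|\overline{q}_{\alpha}-\overline{q}\|_{\mathcal{Q}}$ in terms of the fixed-control adjoint convergence (hence a rate, if one is available there); the paper's scheme is the more standard one and additionally yields convergence of the optimal values $J_{\alpha}(\overline{q}_{\alpha})\rightarrow J(\overline{q})$, and it adapts more readily to situations (constrained controls, nonlinear state equations) where the clean monotonicity identity is not available.
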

\begin{proof}
We will do the proof in three steps.

\textbf{Step 1.} From the estimation (\ref{estim1}) for $q=0$, there
exists a constant $C_{1}>0$ such that $||u_{\alpha 0}||_{\mathcal{H}}\leq ||u_{\alpha 0}||_{L^2(V)}\leq C_{1}, \,\, \forall \alpha>1$, and from
$J_{\alpha}(\overline{q}_{\alpha})\leq J_{\alpha}(0)$, we have
\begin{equation}\frac{1}{2}||u_{\alpha\overline{q}_{\alpha}}-z_d||^2_{\mathcal{H}}+\frac{M}{2}||\overline{q}_{\alpha}||^2_{\mathcal{Q}}\leq \frac{1}{2}
||u_{\alpha 0}-z_d||^2_{\mathcal{H}}.\nonumber\end{equation}
Therefore, there exist positive constants $C_{2}$ and $C_{3}$ such
that
$$||u_{\alpha\overline{q}_{\alpha}}||_{\mathcal{H}}\leq C_2\quad \text{and}\quad ||\overline{q}_{\alpha}||_{\mathcal{Q}}\leq C_{3},\quad \forall \alpha>1.$$
Now, by an analogous reasoning to the estimates (\ref{estim1}) and (\ref{estimacion2}), there exist positive
constants $C_{4}$  and $C_{5}$ such that, for all $\alpha > 1$, we obtain
\begin{equation}
||u_{\alpha\overline{q}_{\alpha}}||_{L^{2}(V)}+||\dot{u}_{\alpha\overline{q}_{\alpha}}||_{L^2(V_0')}+\sqrt{(\alpha-1)}
    ||u_{\alpha\overline{q}_{\alpha}}-b||_{L^{2}(L^2(\Gamma_1))}\leq C_4\nonumber
\end{equation}
\begin{equation}
||p_{\alpha\overline{q}_{\alpha}}||_{L^{2}(V)}+||\dot{p}_{\alpha\overline{q}_{\alpha}}||_{L^2(V_0')}+\sqrt{(\alpha-1)}
    ||p_{\alpha\overline{q}_{\alpha}}||_{L^{2}(L^2(\Gamma_1))}\leq
    C_5.\nonumber
\end{equation}
From the previous estimations, we have that there exist $f\in
\mathcal{Q}$, $\mu\in L^2(V)$, $\dot{\mu}\in L^2(V_0')$,
$\rho\in L^2(V)$ and $\dot{\rho}\in L^2(V_0')$ such that
$$\overline{q}_{\alpha}\rightharpoonup f\in \mathcal{Q}, \quad  u_{\alpha\overline{q}_{\alpha}}\rightharpoonup \mu\in L^2(V),\quad
\dot{u}_{\alpha\overline{q}_{\alpha}}\rightharpoonup \dot{\mu}\in
L^2(V'_{0}),$$
$$p_{\alpha\overline{q}_{\alpha}}\rightharpoonup \rho\in L^2(V),\quad
\dot{p}_{\alpha\overline{q}_{\alpha}}\rightharpoonup \dot{\rho}\in
L^2(V'_{0}).$$

\textbf{Step 2.}  Taking into account the weak convergence of
$u_{\alpha\overline{q}_{\alpha}}$ to $\mu$ in $L^2(V)$ and the
estimation $\sqrt{(\alpha-1)}
    ||u_{\alpha\overline{q}_{\alpha}}-b||_{L^{2}(L^2(\Gamma_1))}\leq C_{4}$, in similar way that Proposition \ref{asintotico1}, we obtain that $\mu=u_{f}$.
Moreover, for the adjoint state, we have that
$p_{\alpha\overline{q}_{\alpha}}$ is weakly convergent to $\rho$ in
$L^2(V)$ and from estimation $
\sqrt{(\alpha-1)}
    ||p_{\alpha\overline{q}_{\alpha}}||_{L^{2}(L^2(\Gamma_1))}\leq C_5$, in similar way that Proposition \ref{asintotico2}, we obtain that $\rho=p_{f}$.
Therefore, we have $u_{\alpha\overline{q}_{\alpha}}\rightharpoonup
u_{f}$ in $L^2(V)$ and $p_{\alpha\overline{q}_{\alpha}}\rightharpoonup p_{f}$ in $L^2(V)$. Now, the optimality condition for the optimal control problem (\ref{PControlalfaJ}) is given by
$(M
\overline{q}_{\alpha}-p_{\alpha\overline{q}_{\alpha}},\eta)_{\mathcal{Q}}=0$, $\forall \eta\in \mathcal{Q}$, and taking into account that
\[
p_{\alpha\overline{q}_{\alpha}}\rightharpoonup p_{f}\quad\text{
in}\quad L^2(V)\quad \text{and}\quad
\overline{q}_{\alpha}\rightharpoonup f\quad\text{in}\quad
\mathcal{Q},\] we obtain $-p_{f}+Mf=0$
and by uniqueness of the optimal control we deduce that
$f=\overline{q}$. Therefore $u_{f}=u_{\overline{q}}$, $p_{f}=p_{\overline{q}}$, $\dot{u}_{f}=\dot{u}_{\overline{q}}$ and $\dot{p}_{f}=\dot{u}_{\overline{q}}$.

\textbf{Step 3.} We have, for all $q\in \mathcal{Q}$
\[
J(\overline{q})=\frac{1}{2}||u_{\overline{q}}-z_d||_{\mathcal{H}}^2+\frac{M}{2}||\overline{q}||^2_{\mathcal{Q}}
\leq\liminf\limits_{\alpha\rightarrow\infty}\left[\frac{1}{2}||u_{\alpha\overline{q}_{\alpha}}-z_d||_{\mathcal{H}}^2
+\frac{M}{2}||\overline{q}_{\alpha}||^2_{\mathcal{Q}}\right]\leq
\]
\[
\limsup\limits_{\alpha\rightarrow\infty}
\left[\frac{1}{2}||u_{\alpha\overline{q}_{\alpha}}-z_d||_{\mathcal{H}}^2+\frac{M}{2}||\overline{q}_{\alpha}||^2_{\mathcal{Q}}\right]
\leq\limsup\limits_{\alpha\rightarrow\infty} J_{\alpha}(q)=
\]
\[
\lim\limits_{\alpha\rightarrow\infty}\left[\frac{1}{2}||u_{\alpha
q}-z_d||_{\mathcal{H}}^2+\frac{M}{2}||q||^2_{\mathcal{Q}}\right]=\frac{1}{2}||u_{q}-z_d||_{\mathcal{H}}^2+\frac{M}{2}||q||^2_{\mathcal{Q}}=J(q).
\]
By taking infimum on $q$, all the above inequalities become
equalities and therefore
\[
\lim\limits_{\alpha\rightarrow\infty}\left[||u_{\alpha\overline{q}_{\alpha}}-z_d||_{\mathcal{H}}^2+M||\overline{q}_{\alpha}||^2_{\mathcal{Q}}\right]
=||u_{\overline{q}}-z_d||_{\mathcal{H}}^2+M||\overline{q}||^2_{\mathcal{Q}},
\]
that is
\begin{equation*}
\lim\limits_{\alpha\rightarrow\infty}||(\sqrt{M}
\overline{q}_{\alpha},u_{\alpha\overline{q}_{\alpha}}-z_d)||_{\mathcal{Q}\times
\mathcal{H}}^2=||(\sqrt{M}\overline{q},u_{\overline{q}}-z_d)||^2_{\mathcal{Q}\times
\mathcal{H}}.
\end{equation*}
The previous equality, the convergence
$\overline{q}_{\alpha}\rightharpoonup \overline{q}$ in $\mathcal{Q}$
and $u_{\alpha\overline{q}_{\alpha}}\rightharpoonup
u_{\overline{q}}$ in $L^2(V)$ imply that
$(\overline{q}_{\alpha},u_{\alpha\overline{q}_{\alpha}})\rightarrow(\overline{q},u_{\overline{q}})$
strongly in $\mathcal{Q}\times\mathcal{H}$, when
$\alpha\rightarrow\infty$.
\newline
Finally, if we take
$v=u_{\alpha\overline{q}_{\alpha}}(t)-u_{\overline{q}}(t)\in V$ in
(\ref{Palfavariacional}) for $u=u_{\alpha\overline{q}_{\alpha}}$ and if we call
$z_{\alpha}=u_{\alpha\overline{q}_{\alpha}}-u_{\overline{q}}$, we have
\begin{equation*}\begin{split}
&\quad\lambda_1||z_{\alpha}(t)||_{V}^2 \leq
(g(t)-\dot{u}_{\overline{q}}(t),z_{\alpha}(t))_H-(\overline{q}_{\alpha}(t),z_{\alpha}(t))_Q-a(u_{\overline{q}}(t),z_{\alpha}(t)).
\end{split}\end{equation*}
Integrating between 0 and $T$ and taking into account that $z_{\alpha}\rightharpoonup 0$ weakly in $L^2(V)$, $z_{\alpha}$ is bounded independent of $\alpha$,  and
$\overline{q}_{\alpha}\rightarrow \overline{q}$ strongly in
$\mathcal{Q}$ when $\alpha\rightarrow \infty$, following \cite{BT2}, we obtain
$$\int\limits_0^T\left[
(g(t)-\dot{u}_{\overline{q}}(t),z_{\alpha}(t))_H-(\overline{q}_{\alpha}(t),z_{\alpha}(t))_Q-a(u_{\overline{q}}(t),z_{\alpha}(t))\right]dt \rightarrow 0.$$

Next, we have $\lim\limits_{\alpha\rightarrow\infty}||z_{\alpha}||_{L^2(V)}=0$. From the variational equalities (\ref{Pvariacional}) and (\ref{Palfavariacional}), we have
\begin{equation*}(\dot{z}_{\alpha}(t),
v)_{H}+a(z_{\alpha}(t),v)=(\overline{q}(t)-\overline{q}_{\alpha}(t),v)_Q,
\quad \forall v\in V_0.
\end{equation*}
Therefore $|| \dot{z}_{\alpha}(t)||^2_{V_0'}\leq2||
z_{\alpha}(t)||^2_{V}+2||\gamma_{0}||^2||\overline{q}(t)-\overline{q}_{\alpha}(t)||^2_Q$ and integrating on $[0,T]$, we obtain
\begin{equation*}|| \dot{z}_{\alpha}||^2_{L^2(V_0')}\leq2|| z_{\alpha}||^2_{L^2(V)}+2||\gamma_{0}||^2||\overline{q}-\overline{q}_{\alpha}||^2_{\mathcal{Q}}.
\end{equation*}
Since that $\overline{q}_{\alpha}\rightarrow \overline{q}$
strongly in $\mathcal{Q}$ and
$u_{\alpha\overline{q}_{\alpha}}\rightarrow u_{\overline{q}}$
strongly in $L^2(V)$ when $\alpha\rightarrow\infty$,
we can say that $\dot{z}_{\alpha}\rightarrow 0$ strongly in
$L^2(V_0')$, that is
$\dot{u}_{\alpha\overline{q}_{\alpha}}\rightarrow
\dot{u}_{\overline{q}}$  strongly in $L^2(V_0')$. In similar way, we prove that $(p_{\alpha\overline{q}_{\alpha}},\dot{p}_{\alpha\overline{q}_{\alpha}})\rightarrow (p_{\overline{q}},\dot{p}_{\overline{q}})$ strongly in $L^2(V)\times L^2(V_0')$, when $\alpha\rightarrow \infty$.
\end{proof}

\section{Estimations  between the Optimal Controls}

In this Section, we obtain estimations between the solutions of the some Neumann boundary optimal control problems and the solutions of the simultaneous distributed-boundary optimal control problems studied in \cite{TaBoGa}.

\subsection{Estimations with respect to the problem $\mathbf{S}$}
We consider the Neumann boundary optimal control problem
\begin{equation}\label{42}
\text{find}\quad \overline{q}\in\mathcal{Q} \quad\text{such that}\quad J_1(\overline{q})=\min\limits_{q\in\mathcal{Q}}J_1(q)\qquad \text{for fixed}\quad g\in\mathcal{H},
\end{equation}
where $J_1$ is the cost functional given in (\ref{PControlJ}) plus the constant $\frac{M_1}{2}||g||^2_{\mathcal{H}}$, that is,
$J_1:\mathcal{Q}\rightarrow \mathbb{R}^+_0$ is given by
\begin{equation*}
J_1(q)=\frac{1}{2}||u_q-z_d||^2_{\mathcal{H}}+\frac{M_1}{2}||g||^2_{\mathcal{H}} +\frac{M}{2}||q||^2_{\mathcal{Q}}     \qquad (\text{fixed} \quad g\in \mathcal{H}),
\end{equation*}
where $u_q$ is the unique solution of the problem (\ref{Pvariacional}) for fixed $g$.
\begin{rem} The functional $J^{+}(g,q)$ defined in \cite[see (7)]{TaBoGa} and the functional $J_1$ previously defined, satisfy the following elemental estimation
\begin{equation*}
J^{+}(\overline{\overline{g}},\overline{\overline{q}})\leq J_1(\overline{q}),\quad \forall g\in\mathcal{H}.
\end{equation*}\end{rem}
In the following theorem we obtain estimations between the solution of the boundary optimal control problem (\ref{42}) and the second component of the solution of to simultaneous distributed-boundary optimal control problem studied in \cite{TaBoGa}.
\begin{thm}
If $(\overline{\overline{g}},\overline{\overline{q}})\in\mathcal{H}\times\mathcal{Q}$ is the unique solution to the distributed-boundary optimal control problem in \cite[see (7)]{TaBoGa} and  $\overline{q}$ is the unique solution to the optimal control problem (\ref{42}), then
\begin{equation}\label{440}
||\overline{q}-\overline{\overline{q}}||_{\mathcal{Q}}\leq\frac{||\gamma_0||}{\lambda_0 M}||u_{\overline{\overline{g}}\,\overline{\overline{q}}}-u_{g\,\overline{q}}||_{\mathcal{H}},\quad \forall g \in \mathcal{H}
\end{equation}
where $\gamma_0$ is the trace operator with $||\gamma_{0}||=\sup\limits_ {v\in V-\{0\}}\frac{||\gamma_{0}(v)||_{Q}}{||v||_{V}}$ and $\lambda_0$ the coerciveness constant of the bilinear form $a$.
\end{thm}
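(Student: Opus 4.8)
The plan is to exploit the variational characterizations of the two optimal controls through their optimality conditions, and then subtract. Recall from Lemma \ref{lema2}(iv) that $\overline{q}$ satisfies $M\overline{q}-p_{g\,\overline{q}}=0$ in $\mathcal{Q}$, where $p_{g\,\overline{q}}$ is the adjoint state associated with $u_{g\,\overline{q}}$ (for the fixed $g$). Likewise, the simultaneous distributed-boundary problem of \cite{TaBoGa} comes with its own optimality system, and its second-component condition should read $M\overline{\overline{q}}-p_{\overline{\overline{g}}\,\overline{\overline{q}}}|_{\Gamma_2}=0$ (again as an identity in $\mathcal{Q}$), where $p_{\overline{\overline{g}}\,\overline{\overline{q}}}$ solves the adjoint problem with right-hand side $u_{\overline{\overline{g}}\,\overline{\overline{q}}}-z_d$. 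Subtracting these two identities gives
\begin{equation*}
M(\overline{q}-\overline{\overline{q}}) = p_{g\,\overline{q}} - p_{\overline{\overline{g}}\,\overline{\overline{q}}}\quad\text{in }\mathcal{Q}.
\end{equation*}

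Next I would estimate the right-hand side. The difference $w=p_{g\,\overline{q}}-p_{\overline{\overline{g}}\,\overline{\overline{q}}}$ solves the linear backward parabolic variational equality with right-hand side $(u_{g\,\overline{q}}-u_{\overline{\overline{g}}\,\overline{\overline{q}}},v)_H$ and zero terminal data. Testing this equation with $w$ itself, integrating in time, and using the coercivity constant $\lambda_0$ of $a$ yields $\|w\|_{L^2(V_0)}\le \tfrac{1}{\lambda_0}\|u_{g\,\overline{q}}-u_{\overline{\overline{g}}\,\overline{\overline{q}}}\|_{\mathcal{H}}$; then restricting to $\Gamma_2$ and using the trace operator, $\|w\|_{\mathcal{Q}}=\|w\|_{L^2(L^2(\Gamma_2))}\le \|\gamma_0\|\,\|w\|_{L^2(V)}\le \tfrac{\|\gamma_0\|}{\lambda_0}\|u_{\overline{\overline{g}}\,\overline{\overline{q}}}-u_{g\,\overline{q}}\|_{\mathcal{H}}$. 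Dividing by $M$ gives exactly \eqref{440}. The inequality holds for every $g\in\mathcal{H}$ because the left-hand side $\overline{q}$ and the objects $u_{g\,\overline{q}}$, $p_{g\,\overline{q}}$ all depend on the chosen (fixed) $g$, while $(\overline{\overline{g}},\overline{\overline{q}})$ is a single fixed pair.

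The step I expect to require the most care is pinning down the precise form of the optimality condition for the second component of the simultaneous problem from \cite{TaBoGa}, and in particular checking that its adjoint state is governed by the \emph{same} adjoint operator as $p_q$ here (same mixed boundary conditions, homogeneous on $\Gamma_1$, zero flux on $\Gamma_2$), so that the difference $w$ indeed solves a clean linear problem with source $u_{g\,\overline{q}}-u_{\overline{\overline{g}}\,\overline{\overline{q}}}$ and no extra forcing from the distributed control. Once that alignment is confirmed, the energy estimate for $w$ and the trace inequality are routine, and no compactness or convergence argument is needed — the bound is purely algebraic plus one application of coercivity.
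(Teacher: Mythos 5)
Your proposal is correct and follows essentially the same route as the paper: subtract the two optimality conditions (the paper tests them against $\eta=\overline{\overline{q}}-\overline{q}$ and adds, which is equivalent to your pointwise subtraction in $\mathcal{Q}$), bound $\|\overline{q}-\overline{\overline{q}}\|_{\mathcal{Q}}$ by $\tfrac{\|\gamma_0\|}{M}\|p_{\overline{\overline{g}}\,\overline{\overline{q}}}-p_{g\,\overline{q}}\|_{L^2(V)}$ via the trace operator, and then obtain $\|p_{\overline{\overline{g}}\,\overline{\overline{q}}}-p_{g\,\overline{q}}\|_{L^2(V)}\le\tfrac{1}{\lambda_0}\|u_{\overline{\overline{g}}\,\overline{\overline{q}}}-u_{g\,\overline{q}}\|_{\mathcal{H}}$ by the same energy estimate on the difference of adjoint states (zero terminal data, coercivity, Young's inequality). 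The point you flagged as delicate — that both adjoint states satisfy the same homogeneous mixed boundary-value problem so their difference has source $u_{\overline{\overline{g}}\,\overline{\overline{q}}}-u_{g\,\overline{q}}$ and no extra forcing — is exactly what the paper verifies.
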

\begin{proof}
By the optimality condition for $\overline{q}$, for fixed $g\in \mathcal{H}$, we have $
(M\overline{q}-p_{g\,\overline{q}},\eta)_{\mathcal{Q}}=0$, $\forall\eta\in\mathcal{Q}$, and taking $\eta=\overline{\overline{q}}-\overline{q}$ we obtain \begin{equation}\label{47}
(M\overline{q}-p_{g\,\overline{q}},\overline{\overline{q}}-\overline{q})_{\mathcal{Q}}=0.
\end{equation}
On the other hand, if we take $h=0\in\mathcal{H}$ in the optimality condition for $(\overline{\overline{g}}, \overline{\overline{q}})$ given in \cite{TaBoGa}, we have $(M\overline{\overline{q}}-p_{\overline{\overline{g}}\,\overline{\overline{q}}},\eta)_{\mathcal{Q}}=0$, $\forall\eta\in\mathcal{Q}$, next, taking  $\eta=\overline{q}-\overline{\overline{q}}$, we obtain
\begin{equation}\label{48}
(-M\overline{\overline{q}}+p_{\overline{\overline{g}}\,\overline{\overline{q}}},\overline{\overline{q}}-\overline{q})_{\mathcal{Q}}=0.
\end{equation}
By adding the expressions (\ref{47}) and (\ref{48}), we have
$$\left(M(\overline{q}-\overline{\overline{q}})+(p_{\overline{\overline{g}}\,\overline{\overline{q}}}-p_{g\,\overline{q}}),\overline{\overline{q}}-\overline{q}\right)_{\mathcal{Q}}=0.$$
Here, by the Cauchy-Schwarz inequality and the trace theorem, we have
\begin{equation*}
||\overline{\overline{q}}-\overline{q}||_{\mathcal{Q}} \leq\frac{||\gamma_0||}{M}||p_{\overline{\overline{g}}\,\overline{\overline{q}}}-p_{g\,\overline{q}}||_{L^2(V)}.
\end{equation*}
Now, if we prove that $$||p_{\overline{\overline{g}}\,\overline{\overline{q}}}-p_{g\,\overline{q}}||_{L^2(V)}\leq\frac{1}{\lambda_0}||u_{\overline{\overline{g}}\,\overline{\overline{q}}}-u_{g\,\overline{q}}||_{\mathcal{H}}$$
the estimation (\ref{440}) holds. In fact, by the variational equality for the adjoint state given in \cite[see (5)]{TaBoGa}, for $g=\overline{\overline{g}}$ and $q=\overline{\overline{q}}$, we have
\begin{equation*}
 -\langle \dot{p}_{\overline{\overline{g}}\,\overline{\overline{q}}}(t),
v\rangle+a(p_{\overline{\overline{g}}\,\overline{\overline{q}}}(t),v)=(u_{\overline{\overline{g}}\,\overline{\overline{q}}}(t)-z_d(t),v)_H, \quad \forall
v\in V_0,
\end{equation*}
and for fixed $g\in\mathcal{H}$ and $q=\overline{q}$
\begin{equation*}
 -\langle \dot{p}_{g\,\overline{q}}(t),
v\rangle+a(p_{g\,\overline{q}}(t),v)=(u_{g\,\overline{q}}(t)-z_d(t),v)_H, \quad \forall
v\in V_0.
\end{equation*}
Subtracting these equations, we obtain
\begin{equation*}
 -\langle \dot{p}_{\overline{\overline{g}}\,\overline{\overline{q}}}(t)-\dot{p}_{g\,\overline{q}}(t),
v\rangle+a(p_{\overline{\overline{g}}\,\overline{\overline{q}}}(t)-p_{g\,\overline{q}}(t),v)=(u_{\overline{\overline{g}}\,\overline{\overline{q}}}(t)-u_{g\,\overline{q}}(t),v)_H, \quad \forall
v\in V_0.
\end{equation*}
Replacing $v=p_{\overline{\overline{g}}\,\overline{\overline{q}}}(t)-p_{g\,\overline{q}}(t)\in V_0$ and using that $$2\langle \dot{p}_{\overline{\overline{g}}\,\overline{\overline{q}}}(t)-\dot{p}_{g\,\overline{q}}(t),
p_{\overline{\overline{g}}\,\overline{\overline{q}}}(t)-p_{g\,\overline{q}}(t)\rangle=\frac{d}{dt}
||p_{\overline{\overline{g}}\,\overline{\overline{q}}}(t)-p_{g\,\overline{q}}(t)||^2_H,$$  we obtain
\begin{equation*}\begin{split}
&\quad-\frac{1}{2}\frac{d}{dt}
||p_{\overline{\overline{g}}\,\overline{\overline{q}}}(t)-p_{g\,\overline{q}}(t)||^2_H+\lambda_0
 ||\nabla\left(p_{\overline{\overline{g}}\,\overline{\overline{q}}}(t)-p_{g\,\overline{q}}(t)\right)||^2_H \\&\leq ||u_{\overline{\overline{g}}\,\overline{\overline{q}}}(t)-u_{g\,\overline{q}}(t)||_H||p_{\overline{\overline{g}}\,\overline{\overline{q}}}(t)-p_{g\,\overline{q}}(t)||_H.
\end{split}\end{equation*}
and by using Young inequality for $\epsilon=\lambda_0$, we have
\begin{equation*}\begin{split}
&\quad-\frac{1}{2}\frac{d}{dt}
||p_{\overline{\overline{g}}\,\overline{\overline{q}}}(t)-p_{g\,\overline{q}}(t)||^2_H+\lambda_0
 ||p_{\overline{\overline{g}}\,\overline{\overline{q}}}(t)-p_{g\,\overline{q}}(t)||^2_V\\&\leq \frac{1}{2\lambda_0}||u_{\overline{\overline{g}}\,\overline{\overline{q}}}(t)-u_{g\,\overline{q}}(t)||^2_H+\frac{\lambda_0}{2}||p_{\overline{\overline{g}}\,\overline{\overline{q}}}(t)-p_{g\,\overline{q}}(t)||_V^2.
\end{split}\end{equation*}
Then
\begin{small}\begin{equation*}\begin{split}
&\quad-\frac{d}{dt}
||p_{\overline{\overline{g}}\,\overline{\overline{q}}}(t)-p_{g\,\overline{q}}(t)||^2_H+\lambda_0
 ||p_{\overline{\overline{g}}\,\overline{\overline{q}}}(t)-p_{g\,\overline{q}}(t)||^2_V\leq \frac{1}{\lambda_0}||u_{\overline{\overline{g}}\,\overline{\overline{q}}}(t)-u_{g\,\overline{q}}(t)||^2_H.
\end{split}\end{equation*}\end{small}
By integrating between $0$ and $T$, and using that
$p_{\overline{\overline{g}}\,\overline{\overline{q}}}(T)=p_{g\,\overline{q}}(T)=0,$ we deduce
\begin{equation*}\begin{split}
&\quad
||p_{\overline{\overline{g}}\,\overline{\overline{q}}}(0)-p_{g\,\overline{q}}(0)||^2_H+\lambda_0
 ||p_{\overline{\overline{g}}\,\overline{\overline{q}}}-p_{g\,\overline{q}}||^2_{L^2(V)}\leq \frac{1}{\lambda_0}||u_{\overline{\overline{g}}\,\overline{\overline{q}}}-u_{g\,\overline{q}}||^2_{\mathcal{H}},
\end{split}\end{equation*}
i.e.
\begin{equation*}\begin{split}
&\quad
 ||p_{\overline{\overline{g}}\,\overline{\overline{q}}}-p_{g\,\overline{q}}||_{L^2(V)}\leq \frac{1}{\lambda_0}||u_{\overline{\overline{g}}\,\overline{\overline{q}}}-u_{g\,\overline{q}}||_{\mathcal{H}}.
\end{split}\end{equation*}
and then (\ref{440}) holds.
\end{proof}

Now, if we consider the distributed optimal control problem
\begin{equation}\label{distributed}
\text{find}\quad \overline{g}\in\mathcal{H} \quad\text{such that}\quad J_2(\overline{g})=\min\limits_{g\in\mathcal{H}}J_2(g)\qquad \text{for fixed}\quad q\in\mathcal{Q},
\end{equation}
where $J_2$ is the cost functional given in \cite{MT} plus the constant $\frac{M}{2}||q||^2_{\mathcal{Q}}$, that is,
$J_2:\mathcal{H}\rightarrow \mathbb{R}^+_0$ is given by
\begin{equation*}
J_2(g)=\frac{1}{2}||u_g-z_d||^2_{\mathcal{H}}+\frac{M_1}{2}||g||^2_{\mathcal{H}} +\frac{M}{2}||q||^2_{\mathcal{Q}}     \qquad (\text{fixed}\quad q\in \mathcal{Q}),
\end{equation*}
where $u_g$ is the unique solution of the problem (\ref{Pvariacional}) for fixed $q$, we can prove the following corollary.
\begin{cor}
If
$(\overline{\overline{g}},\overline{\overline{q}})\in\mathcal{H}\times\mathcal{Q}$
is the unique solution to the simultaneous optimal control problem studied in \cite[see (5)]{TaBoGa}, $\overline{g}$ is the unique solution to the distributed optimal control problem (\ref{distributed}) for fixed
$q$ ($q=\overline{\overline{q}}$), and  $\overline{q}$ is the unique solution to the problem (\ref{42}) for fixed $g$ ($g=\overline{\overline{g}}$) then $\overline{g}=\overline{\overline{g}}$ and
$\overline{q}=\overline{\overline{q}}$.
\end{cor}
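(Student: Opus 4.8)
The plan is to compare the first–order optimality systems of the three problems and to invoke, for each of them, that the optimality condition is not only necessary but also sufficient (by strict convexity), so that it characterizes the minimizer uniquely. First I would recall, following \cite{TaBoGa}, that the simultaneous problem minimizes $J^{+}(g,q)=\frac12\|u_{gq}-z_d\|_{\mathcal{H}}^{2}+\frac{M_1}{2}\|g\|_{\mathcal{H}}^{2}+\frac{M}{2}\|q\|_{\mathcal{Q}}^{2}$ over $\mathcal{H}\times\mathcal{Q}$, that it has a unique solution $(\overline{\overline{g}},\overline{\overline{q}})$, and that the corresponding optimality condition, obtained by annihilating the two partial G\^ateaux derivatives of $J^{+}$ (take $h=0$ in $\mathcal{H}$, then $\eta=0$ in $\mathcal{Q}$), reads
\[
M_1\overline{\overline{g}}+p_{\overline{\overline{g}}\,\overline{\overline{q}}}=0 \ \text{ in } \mathcal{H},\qquad M\overline{\overline{q}}-p_{\overline{\overline{g}}\,\overline{\overline{q}}}=0 \ \text{ in } \mathcal{Q},
\]
where $p_{\overline{\overline{g}}\,\overline{\overline{q}}}$ is the adjoint state of \cite[see (5)]{TaBoGa}, whose parabolic variational equality depends on $(g,q)$ only through the term $u_{gq}-z_d$ on its right–hand side.

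The key observation is that each of these two equations is precisely the optimality condition of one of the two sub–problems once the ``other'' variable is frozen at its simultaneous–optimal value. Indeed, for fixed $q=\overline{\overline{q}}$ the distributed problem (\ref{distributed}) is strictly convex (the additive constant $\tfrac{M}{2}\|q\|_{\mathcal{Q}}^{2}$ is irrelevant), hence it has a unique minimizer $\overline{g}$, characterized by the $\mathcal{H}$–analogue of Lemma \ref{lema2}(iv), namely $M_1\overline{g}+p_{\overline{g}\,\overline{\overline{q}}}=0$, with adjoint state $p_{\overline{g}\,\overline{\overline{q}}}$ solving the same variational equality as above with $g=\overline{g}$, $q=\overline{\overline{q}}$. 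But the first equation of the displayed system says exactly that $g=\overline{\overline{g}}$ already satisfies this characterization, since $p_{\overline{\overline{g}}\,\overline{\overline{q}}}$ is the adjoint state evaluated at $(\overline{\overline{g}},\overline{\overline{q}})$; by uniqueness of the solution to (\ref{distributed}) we conclude $\overline{g}=\overline{\overline{g}}$. Symmetrically, for fixed $g=\overline{\overline{g}}$ the boundary problem (\ref{42}) is strictly convex (Lemma \ref{lema1}) with unique minimizer $\overline{q}$ characterized by $M\overline{q}-p_{\overline{\overline{g}}\,\overline{q}}=0$; the second equation of the system shows $q=\overline{\overline{q}}$ satisfies this, and uniqueness gives $\overline{q}=\overline{\overline{q}}$.

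The only delicate point is the bookkeeping of the adjoint states: one must verify that the adjoint problem used in (\ref{42}) for a fixed $g$, the adjoint problem used in (\ref{distributed}) for a fixed $q$, and the adjoint problem of the simultaneous problem in \cite{TaBoGa} are literally the same parabolic variational equality, differing only through the state $u_{gq}$ entering the source term, so that $p_{\overline{\overline{g}}\,\overline{\overline{q}}}$ coincides with $p_{\overline{g}\,\overline{\overline{q}}}$ at $\overline{g}=\overline{\overline{g}}$ and with $p_{\overline{\overline{g}}\,\overline{q}}$ at $\overline{q}=\overline{\overline{q}}$; and that the sign conventions are consistent (the $+$ in front of $p$ in the distributed condition and the $-$ in the boundary condition come from the opposite signs of $g$ and $q$ in $L(t,v)=(g(t),v)_H-(q(t),v)_Q$). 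Once this is checked the argument is immediate and requires no estimates.
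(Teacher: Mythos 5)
Your proof is correct, but it proceeds differently from the paper's. You split the optimality system of the simultaneous problem into its two partial conditions, $M_1\overline{\overline{g}}+p_{\overline{\overline{g}}\,\overline{\overline{q}}}=0$ and $M\overline{\overline{q}}-p_{\overline{\overline{g}}\,\overline{\overline{q}}}=0$, observe that each one is literally the first-order condition of the corresponding sub-problem with the other variable frozen at its simultaneous-optimal value (the adjoint equations being identical, depending on $(g,q)$ only through $u_{gq}$), and then invoke \emph{sufficiency} of the first-order condition for a strictly convex G\^ateaux-differentiable functional to conclude that $\overline{\overline{g}}$ and $\overline{\overline{q}}$ are the unique minimizers of (\ref{distributed}) and (\ref{42}) respectively. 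The paper instead never appeals to sufficiency: it tests the two optimality conditions against the difference direction $h=\overline{\overline{g}}-\overline{g}$, subtracts them, and uses the adjoint-state identity $\left(p_{\overline{g}\,\overline{\overline{q}}}-p_{\overline{\overline{g}}\,\overline{\overline{q}}},\overline{\overline{g}}-\overline{g}\right)_{\mathcal{H}}=-\|u_{\overline{g}\,\overline{\overline{q}}}-u_{\overline{\overline{g}}\,\overline{\overline{q}}}\|^2_{\mathcal{H}}$ (the analogue of Lemma \ref{lema2}(i)) to arrive at $-\|u_{\overline{g}\,\overline{\overline{q}}}-u_{\overline{\overline{g}}\,\overline{\overline{q}}}\|^2_{\mathcal{H}}=M_1\|\overline{\overline{g}}-\overline{g}\|^2_{\mathcal{H}}$, which forces both sides to vanish. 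Your route is more conceptual and shorter, trading the quantitative sign argument for the standard fact that a critical point of a convex functional is a global minimizer; the paper's route stays entirely within variational identities and additionally yields $u_{\overline{g}\,\overline{\overline{q}}}=u_{\overline{\overline{g}}\,\overline{\overline{q}}}$ as a by-product. Your two flagged ``delicate points'' (coincidence of the adjoint problems and the sign conventions coming from $L(t,v)=(g(t),v)_H-(q(t),v)_Q$) are exactly the right things to check, and both check out against the paper's conventions.
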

\begin{proof}
If we take $h=\overline{\overline{g}}-\overline{g}$ in the optimality condition for the problem given in \cite{MT}, we have
\begin{equation}\label{1}
\left(M_1\overline{g}+p_{\overline{g}\,\overline{\overline{q}}},\overline{\overline{g}}-\overline{g}\right)_{\mathcal{H}}=0.
\end{equation}
On the other hand, if we consider $h=\overline{\overline{g}}-\overline{g}$ and $\eta=0$ in the optimality condition for the simultaneous optimal control problem studied in \cite{TaBoGa}, we obtain
\begin{equation}\label{2}
\left(M_1\overline{\overline{g}}+p_{\overline{\overline{g}}\,\overline{\overline{q}}},\overline{\overline{g}}-\overline{g}\right)_{\mathcal{H}}.
\end{equation}
Subtracting (\ref{1}) and (\ref{2}), we deduce that
$$\left(M_1\overline{g}+p_{\overline{g}\,\overline{\overline{q}}}-M_1\overline{\overline{g}}-p_{\overline{\overline{g}}\,\overline{\overline{q}}},\overline{\overline{g}}-\overline{g}\right)_{\mathcal{H}}=0, $$ and therefore
\begin{equation}\left(p_{\overline{g}\,\overline{\overline{q}}}-p_{\overline{\overline{g}}\,\overline{\overline{q}}},\overline{\overline{g}}-\overline{g}\right)_{\mathcal{H}}-M_1\left(\overline{\overline{g}}-\overline{g},\overline{\overline{g}}-\overline{g}\right)_{\mathcal{H}}=0.\nonumber \end{equation}
next, by using that
$$\left(p_{\overline{g}\,\overline{\overline{q}}}-p_{\overline{\overline{g}}\,\overline{\overline{q}}},\overline{\overline{g}}-\overline{g}\right)_{\mathcal{H}}=-||u_{\overline{g}\,\overline{\overline{q}}}-u_{\overline{\overline{g}}\,\overline{\overline{q}}}||^2_{\mathcal{H}},$$ we have
\begin{equation*}-||u_{\overline{g}\,\overline{\overline{q}}}-u_{\overline{\overline{g}}\,\overline{\overline{q}}}||^2_{\mathcal{H}}=M_1||\overline{\overline{g}}-\overline{g}||^2_{\mathcal{H}}.
\end{equation*}
Here, we deduce that $||\overline{\overline{g}}-\overline{g}||^2_{\mathcal{H}}=0$ and therefore
$\overline{g}=\overline{\overline{g}}$.
\newline In similar way we prove that
$\overline{q}=\overline{\overline{q}}$.
\end{proof}

\subsection{Estimations with respect to the problem $\mathbf{S_{\alpha}}$}
For each $\alpha>0$, we consider the following optimal control problem
\begin{equation}\label{42a}
\text{find}\quad \overline{q}_{\alpha}\in\mathcal{Q} \quad\text{such that}\quad J_{1\alpha}(\overline{q}_{\alpha})=\min\limits_{q\in\mathcal{Q}}J_{1\alpha}(q),
\end{equation}
where $J_{1\alpha}:\mathcal{Q}\rightarrow \mathbb{R}^+_0$ is given by
\begin{equation*}
J_{1\alpha}(q)=\frac{1}{2}||u_{\alpha q}-z_d||^2_{\mathcal{H}}+\frac{M_1}{2}||g||^2_{\mathcal{H}} +\frac{M}{2}||q||^2_{\mathcal{Q}}     \qquad (\text{fixed} \quad g\in \mathcal{H}),
\end{equation*}
that is, $J_{1\alpha}$ is the functional (\ref{PControlalfaJ}) plus the constant $\frac{M_1}{2}||g||^2_{\mathcal{H}}$, and $u_{\alpha q}$ is the unique solution of the problem (\ref{Palfavariacional}) for fixed $g$.
\begin{rem} The functional $J^{+}_{\alpha}$ defined in \cite [see (8)]{TaBoGa} and the functional $J_{1\alpha}$ previously defined satisfy the following estimate

\begin{equation*}
J^{+}_{\alpha}(\overline{\overline{g}}_{\alpha},\overline{\overline{q}}_{\alpha})\leq J_{1\alpha}(\overline{q}_{\alpha}),\quad \forall g\in\mathcal{H}.
\end{equation*}\end{rem}
Estimations between the solution of the Neumann boundary optimal control problem (\ref{42a}) with the second component of the solution to the simultaneous optimal control problem studied in \cite{TaBoGa}, is given in the following theorem whose prove is omitted.
\begin{thm}
If $(\overline{\overline{g}}_{\alpha},\overline{\overline{q}}_{\alpha})\in\mathcal{H}\times\mathcal{Q}$ is the unique solution of the simultaneous optimal control problem \cite[see (6)]{TaBoGa}, $\overline{q}_{\alpha}$ is the unique solutions to the optimal control problem (\ref{42a}), then we have:
\begin{equation*}\label{44}
||\overline{q}_{\alpha}-\overline{\overline{q}}_{\alpha}||_{\mathcal{Q}}\leq\frac{||\gamma_0||}{\lambda_{\alpha} M}||u_{\alpha \overline{\overline{g}}_{\alpha}\overline{\overline{q}}_{\alpha}}-u_{\alpha \,g\,\overline{q}_{\alpha}}||_{\mathcal{H}},\quad \forall g\in \mathcal{H}
\end{equation*}
with $\lambda_{\alpha}=\lambda_1\min\{1,\alpha\}$ and $\lambda_{1}$ the coerciveness constant of the bilinear form $a_{1}$.
\end{thm}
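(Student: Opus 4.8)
The plan is to run the same three-step argument already used for the problem $\mathbf{S}$ (the theorem proving (\ref{440})), now with the bilinear form $a_{\alpha}$ and its adjoint state $p_{\alpha\,g\,q}$ from (\ref{palphavariacional1}) in place of $a$ and $p_{g\,q}$; the only genuinely new point is bookkeeping the coercivity of $a_{\alpha}$ on all of $V$.

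First I would combine the two optimality conditions. The optimality condition for problem (\ref{42a}) reads $(M\overline{q}_{\alpha}-p_{\alpha\,g\,\overline{q}_{\alpha}},\eta)_{\mathcal{Q}}=0$ for all $\eta\in\mathcal{Q}$, which I test with $\eta=\overline{\overline{q}}_{\alpha}-\overline{q}_{\alpha}$. In the optimality system of the simultaneous problem of \cite{TaBoGa} I set the distributed increment $h=0$, obtaining $(M\overline{\overline{q}}_{\alpha}-p_{\alpha\,\overline{\overline{g}}_{\alpha}\,\overline{\overline{q}}_{\alpha}},\eta)_{\mathcal{Q}}=0$, and test with $\eta=\overline{q}_{\alpha}-\overline{\overline{q}}_{\alpha}$. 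Adding the two identities gives
$$\bigl(M(\overline{q}_{\alpha}-\overline{\overline{q}}_{\alpha})+(p_{\alpha\,\overline{\overline{g}}_{\alpha}\,\overline{\overline{q}}_{\alpha}}-p_{\alpha\,g\,\overline{q}_{\alpha}}),\,\overline{\overline{q}}_{\alpha}-\overline{q}_{\alpha}\bigr)_{\mathcal{Q}}=0,$$
so that, by the Cauchy--Schwarz inequality and the trace theorem,
$$\|\overline{q}_{\alpha}-\overline{\overline{q}}_{\alpha}\|_{\mathcal{Q}}\le\frac{\|\gamma_{0}\|}{M}\,\|p_{\alpha\,\overline{\overline{g}}_{\alpha}\,\overline{\overline{q}}_{\alpha}}-p_{\alpha\,g\,\overline{q}_{\alpha}}\|_{L^{2}(V)}.$$

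Next I would prove the adjoint-state stability estimate $\|p_{\alpha\,\overline{\overline{g}}_{\alpha}\,\overline{\overline{q}}_{\alpha}}-p_{\alpha\,g\,\overline{q}_{\alpha}}\|_{L^{2}(V)}\le\lambda_{\alpha}^{-1}\|u_{\alpha\,\overline{\overline{g}}_{\alpha}\,\overline{\overline{q}}_{\alpha}}-u_{\alpha\,g\,\overline{q}_{\alpha}}\|_{\mathcal{H}}$. Subtracting the two copies of (\ref{palphavariacional1}), testing the difference equation with $w(t):=p_{\alpha\,\overline{\overline{g}}_{\alpha}\,\overline{\overline{q}}_{\alpha}}(t)-p_{\alpha\,g\,\overline{q}_{\alpha}}(t)\in V$, using $2\langle\dot w(t),w(t)\rangle=\tfrac{d}{dt}\|w(t)\|_{H}^{2}$, applying Young's inequality with $\epsilon=\lambda_{\alpha}$ to the term $(u_{\alpha\,\overline{\overline{g}}_{\alpha}\,\overline{\overline{q}}_{\alpha}}(t)-u_{\alpha\,g\,\overline{q}_{\alpha}}(t),w(t))_{H}$, integrating over $[0,T]$ and dropping the nonnegative terms $\|w(0)\|_{H}^{2}$ (using $w(T)=0$), yields the bound. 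Combining this with the inequality of the previous paragraph gives exactly the asserted estimate with constant $\tfrac{\|\gamma_{0}\|}{\lambda_{\alpha}M}$.

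The main point requiring care -- and the only difference with the $\mathbf{S}$ case -- is that here $w$ belongs to $V$, not $V_{0}$, so I cannot invoke the Poincaré-type coercivity of $a$ on $V_{0}$. Instead I would record that $a_{1}(v,v)=\|\nabla v\|_{H}^{2}+\|v\|_{L^{2}(\Gamma_{1})}^{2}\ge\lambda_{1}\|v\|_{V}^{2}$ for all $v\in V$ (the coercivity constant $\lambda_{1}$ of $a_{1}$ quoted in the statement), whence for every $\alpha>0$
$$a_{\alpha}(v,v)=\|\nabla v\|_{H}^{2}+\alpha\|v\|_{L^{2}(\Gamma_{1})}^{2}\ge\min\{1,\alpha\}\,a_{1}(v,v)\ge\lambda_{1}\min\{1,\alpha\}\,\|v\|_{V}^{2}=\lambda_{\alpha}\|v\|_{V}^{2}.$$
With this $\alpha$-explicit coercivity in hand, the two steps above proceed verbatim as in the proof of (\ref{440}), and the proof is complete; no essential obstacle remains beyond this routine adaptation.
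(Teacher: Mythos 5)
Your proposal is correct and follows exactly the route the paper intends: the paper omits this proof, stating only that it proceeds ``in similar way'' to the estimate (\ref{440}) for the system $S$, and your three steps (adding the two optimality conditions, Cauchy--Schwarz with the trace operator, and the adjoint-state stability estimate) reproduce that argument faithfully. Your explicit verification that $a_{\alpha}(v,v)\geq\lambda_{1}\min\{1,\alpha\}\,\|v\|_{V}^{2}$ on all of $V$ is precisely the one point that changes relative to the $V_{0}$-coercive case and is exactly where the constant $\lambda_{\alpha}$ in the statement comes from.
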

If we consider the following distributed optimal control problem, for each $\alpha>0$
\begin{equation}\label{distributedalfa}
\text{find}\quad \overline{g}_{\alpha}\in\mathcal{H} \quad\text{such that}\quad J_{2\alpha}(\overline{g}_{\alpha})=\min\limits_{g\in\mathcal{H}}J_{2\alpha}(g),
\end{equation}
where $J_{2\alpha}:\mathcal{H}\rightarrow \mathbb{R}^+_0$ is given by
\begin{equation*}
J_{2\alpha}(g)=\frac{1}{2}||u_{\alpha g}-z_d||^2_{\mathcal{H}}+\frac{M_1}{2}||g||^2_{\mathcal{H}} +\frac{M}{2}||q||^2_{\mathcal{Q}}     \qquad (\text{fixed} \quad q\in \mathcal{Q}),
\end{equation*}
that is, $J_{2\alpha}$ is the functional studied in \cite{MT} plus the constant $\frac{M}{2}||q||^2_{\mathcal{Q}}$, and $u_{\alpha g}$ is the unique solution of the problem (\ref{Palfavariacional}) for fixed $q$, we give the following corollary, whose prove is omitted.
\begin{cor}
If
$(\overline{\overline{g}}_{\alpha},\overline{\overline{q}}_{\alpha})\in\mathcal{H}\times\mathcal{Q}$
is the unique solution of the simultaneous optimal control problem studied in \cite[see (6)]{TaBoGa}, $\overline{g}_{\alpha}$ is the unique solution of the problem (\ref{distributedalfa}) for fixed
$q$ ($q=\overline{\overline{q}}_{\alpha}$), and  $\overline{q}_{\alpha}$ is the unique solution of the problem (\ref{42a}) for fixed $g$ ($g=\overline{\overline{g}}_{\alpha}$) then
$\overline{g}_{\alpha}=\overline{\overline{g}}_{\alpha}$ and
$\overline{q}_{\alpha}=\overline{\overline{q}}_{\alpha}$.\end{cor}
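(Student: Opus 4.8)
The plan is to transcribe the argument used for the non-Robin corollary of Subsection~4.1, replacing the bilinear form $a$, the states $u_q,u_g$ and the adjoint states $p_q$ by their Robin counterparts $a_\alpha$, $u_{\alpha q},u_{\alpha g}$, $p_{\alpha q}$, all with the third subscript notation $u_{\alpha\,g\,q}$, $p_{\alpha\,g\,q}$ already in use in Subsection~4.2. First I would write the first-order optimality condition for the distributed problem (\ref{distributedalfa}) at $\overline{g}_{\alpha}$ with frozen datum $q=\overline{\overline{q}}_{\alpha}$, namely $\big(M_1\overline{g}_{\alpha}+p_{\alpha\,\overline{g}_{\alpha}\,\overline{\overline{q}}_{\alpha}},h\big)_{\mathcal H}=0$ for all $h\in\mathcal H$, and test it with $h=\overline{\overline{g}}_{\alpha}-\overline{g}_{\alpha}$. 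Then I would write the optimality system of the simultaneous problem of \cite{TaBoGa}, pick the distributed direction $h=\overline{\overline{g}}_{\alpha}-\overline{g}_{\alpha}$ together with the null boundary direction $\eta=0$, getting $\big(M_1\overline{\overline{g}}_{\alpha}+p_{\alpha\,\overline{\overline{g}}_{\alpha}\,\overline{\overline{q}}_{\alpha}},\overline{\overline{g}}_{\alpha}-\overline{g}_{\alpha}\big)_{\mathcal H}=0$.

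Subtracting the two identities leaves $\big(p_{\alpha\,\overline{g}_{\alpha}\,\overline{\overline{q}}_{\alpha}}-p_{\alpha\,\overline{\overline{g}}_{\alpha}\,\overline{\overline{q}}_{\alpha}},\overline{\overline{g}}_{\alpha}-\overline{g}_{\alpha}\big)_{\mathcal H}-M_1\|\overline{\overline{g}}_{\alpha}-\overline{g}_{\alpha}\|_{\mathcal H}^2=0$. The key algebraic step is the ``state--adjoint duality'' relation $\big(p_{\alpha\,g_1\,q}-p_{\alpha\,g_2\,q},g_1-g_2\big)_{\mathcal H}=-\|u_{\alpha\,g_1\,q}-u_{\alpha\,g_2\,q}\|_{\mathcal H}^2$, the Robin analogue of Lemma~\ref{lema2}(i)--(ii). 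Granting it, the displayed equation becomes $-\|u_{\alpha\,\overline{g}_{\alpha}\,\overline{\overline{q}}_{\alpha}}-u_{\alpha\,\overline{\overline{g}}_{\alpha}\,\overline{\overline{q}}_{\alpha}}\|_{\mathcal H}^2=M_1\|\overline{\overline{g}}_{\alpha}-\overline{g}_{\alpha}\|_{\mathcal H}^2$; the left side is $\le 0$, the right side $\ge 0$ with $M_1>0$, so both vanish and $\overline{g}_{\alpha}=\overline{\overline{g}}_{\alpha}$. The equality $\overline{q}_{\alpha}=\overline{\overline{q}}_{\alpha}$ follows symmetrically: use the optimality condition $M\overline{q}_{\alpha}-p_{\alpha\,\overline{\overline{g}}_{\alpha}\,\overline{q}_{\alpha}}=0$ for (\ref{42a}) with $g=\overline{\overline{g}}_{\alpha}$ together with the boundary component of the simultaneous optimality system (null distributed direction), test both with $\overline{\overline{q}}_{\alpha}-\overline{q}_{\alpha}$, subtract, and apply $\big(p_{\alpha\,\overline{\overline{g}}_{\alpha}\,q_1}-p_{\alpha\,\overline{\overline{g}}_{\alpha}\,q_2},q_1-q_2\big)_{\mathcal Q}=-\|u_{\alpha\,\overline{\overline{g}}_{\alpha}\,q_1}-u_{\alpha\,\overline{\overline{g}}_{\alpha}\,q_2}\|_{\mathcal H}^2$ to force $\|\overline{\overline{q}}_{\alpha}-\overline{q}_{\alpha}\|_{\mathcal Q}=0$.

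The only non-routine point is establishing these duality identities in the Robin setting, and I expect it to be the main (mild) obstacle. For this I would subtract the adjoint variational equalities (\ref{palphavariacional1}) written for the loads associated with $(g_1,q)$ and $(g_2,q)$, test the difference with $w:=u_{\alpha\,g_1\,q}-u_{\alpha\,g_2\,q}\in L^2(V)$; simultaneously subtract the state equalities (\ref{Palfavariacional}) for the same two loads and test with $p_{\alpha\,g_1\,q}-p_{\alpha\,g_2\,q}\in L^2(V)$; then add. The symmetric bilinear parts $a_\alpha(\cdot,\cdot)$ — including the Robin term $\alpha\int_{\Gamma_1}\!\cdot\,\cdot\,d\gamma$, which is identical in both copies — cancel, while the time-derivative contributions combine into $\tfrac{d}{dt}(\cdot,\cdot)_H$, whose integral over $[0,T]$ vanishes because $u_{\alpha\,g_1\,q}(0)=u_{\alpha\,g_2\,q}(0)=v_b$ and $p_{\alpha\,g_1\,q}(T)=p_{\alpha\,g_2\,q}(T)=0$. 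What survives is precisely $\big(p_{\alpha\,g_1\,q}-p_{\alpha\,g_2\,q},g_1-g_2\big)_{\mathcal H}=-\|w\|_{\mathcal H}^2$ (the sign of the load term in $L_\alpha$ yields the analogous $\mathcal Q$-identity when the control appears on $\Gamma_2$), which is the Robin version of the computation already recorded in Section~2. No regularity beyond that furnished by Propositions~\ref{asintotico1}--\ref{asintotico2} is required, so the argument closes.
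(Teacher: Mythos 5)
Your proposal reproduces exactly the argument the paper uses for the corresponding corollary in Subsection~4.1 (the paper omits the proof of the $\alpha$-version precisely because it is this same computation with $a$, $u$, $p$ replaced by $a_{\alpha}$, $u_{\alpha\cdot}$, $p_{\alpha\cdot}$), and your justification of the state--adjoint duality identity is the standard one, so the approach is the right one and the conclusion is reached correctly. One sign slip should be fixed, though: for the \emph{distributed} control the load enters $L_{\alpha}$ with a plus sign, so the computation you describe (test the adjoint difference with $w$, the state difference with $p_{\alpha g_1 q}-p_{\alpha g_2 q}$, and subtract so that the $a_{\alpha}$-terms cancel and the time derivatives integrate to zero) yields $\bigl(p_{\alpha g_1 q}-p_{\alpha g_2 q},\,g_1-g_2\bigr)_{\mathcal H}=+\|u_{\alpha g_1 q}-u_{\alpha g_2 q}\|^2_{\mathcal H}$, not $-\|\cdot\|^2_{\mathcal H}$ as you state; the minus sign occurs only for the boundary control because of the $-(q,v)_Q$ term in $L_{\alpha}$. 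Your subsequent application is in fact consistent with the correct (plus-sign) identity --- the displayed equation $\bigl(p_{\alpha\overline{g}_{\alpha}\overline{\overline{q}}_{\alpha}}-p_{\alpha\overline{\overline{g}}_{\alpha}\overline{\overline{q}}_{\alpha}},\,\overline{\overline{g}}_{\alpha}-\overline{g}_{\alpha}\bigr)_{\mathcal H}-M_1\|\overline{\overline{g}}_{\alpha}-\overline{g}_{\alpha}\|^2_{\mathcal H}=0$ does become $-\|u_{\alpha\overline{g}_{\alpha}\overline{\overline{q}}_{\alpha}}-u_{\alpha\overline{\overline{g}}_{\alpha}\overline{\overline{q}}_{\alpha}}\|^2_{\mathcal H}=M_1\|\overline{\overline{g}}_{\alpha}-\overline{g}_{\alpha}\|^2_{\mathcal H}$ because the pairing there is against $g_2-g_1$ rather than $g_1-g_2$ --- so the proof closes; just restate the lemma with the correct sign (or with the arguments in opposite order, as the paper does).
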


\section{Real Neumann Boundary Optimal Control \linebreak Problems}

In this Section, we consider the non-stationary real-boundary optimal control problems (\ref{PControlH}) and (\ref{PControlalfaH}) and the stationary real-boundary optimal control problems (\ref{PControlHEstacionario}) and (\ref{PControlalfaHEstacionario}). We prove existence and uniqueness of the solutions to these optimal control problems and monotonicity results are also obtained.

\subsection{Real Neumann boundary optimal control problem in relation to the parabolic system $\mathbf{S}$}

If we consider the real-boundary optimal control problem (\ref{PControlH}) and we denote by $u_{bqg}$ to the unique solution of the variational equality  (\ref{Pvariacional}) for data $b$, $q$ and $g$ and we take $q=\lambda q_{0}$ for fixed $q_{0}\in \mathcal{Q}$ $(q_{0}\neq 0)$ and $\lambda \in \mathbb{R}$, we can prove that
\[
u_{b q g}(t)=u_{b\lambda g}(t)=u_{b}(t)+u_{q}(t)+u_{g}(t),\quad \forall x\in \Omega.
\]
where $u_{b}$ is the unique solution to the parabolic variational equality
\begin{equation}\label{Pvariacionalb}
\left\{
\begin{array}{l l}
u-v_b \in L^2(V_0), \qquad u(0)=v_b\quad\text{and}\quad \dot{u}\in L^2( V_0')\\
\text{such that}\quad \langle \dot{u}(t), v\rangle+a(u(t),v)=0,
\quad \forall v\in V_0,
\end{array}
\right.\end{equation}
$u_{q}$ is the unique solution to the parabolic variational equality
\begin{equation}\label{Pvariacionalq}
\left\{
\begin{array}{l l}
u\in L^2(V_0), \qquad u(0)=0\quad\text{and}\quad \dot{u}\in L^2( V_0')\\
\text{such that}\quad \langle \dot{u}(t), v\rangle+a(u(t),v)=-\lambda(q_0(t),v)_Q,
\quad \forall v\in V_0,
\end{array}
\right.\end{equation}
and $u_{g}$ is the unique solution to the parabolic variational equality
\begin{equation}\label{Pvariacionalg}
\left\{
\begin{array}{l l}
u\in L^2(V_0), \qquad u(0)=0\quad\text{and}\quad \dot{u}\in L^2( V_0')\\
\text{such that}\quad \langle \dot{u}(t), v\rangle+a(u(t),v)=(g(t),v)_H,
\quad \forall v\in V_0.
\end{array}
\right.\end{equation}
We note that, by the linearity, we can prove that $u_{q}(t)=\lambda u_{q_{0}}(t)$, where $u_{q_{0}}$ is the unique solution of (\ref{Pvariacional}) for $q=q_{0}$ and $b=g=0$.
\newline
Next, for each $T>0$, the functional $H_{T}(\lambda)$ can be writen as
\begin{equation*} H_{T}(\lambda)=\frac{1}{2}\int\limits_0^T \int\limits_{\Omega} (u_{b}(t)
+\lambda u_{q_0}(t)+u_{ g}(t)-z_d(t))^2 dx dt+\frac{M\lambda
^2}{2}\int\limits_0^T \int\limits_{\Gamma_{2}} q_0^2(t) d\gamma dt
\end{equation*}
therefore, $H_{T}(\lambda)=\lambda^2 A(T)+\lambda B(T) +C(T)$, where
$$A(T)=\frac{M}{2}\int\limits_0^T \int\limits_{\Gamma_{2}} q_0^2(t) d\gamma dt+\frac{1}{2}\int\limits_0^T \int\limits_{\Omega} u^2_{q_0}(t)dxdt$$
$$B(T)=\int\limits_0^T \int\limits_{\Omega} u_{q_0}(t)( u_{b}(t)+u_{ g}(t)-z_d(t))dx dt$$
$$C(T)=\frac{1}{2}\int\limits_0^T \int\limits_{\Omega}( u_{b}(t)+u_{ g}(t)-z_d(t))^2 dxdt.$$
Here, taking into account that
\begin{small}\begin{equation*}\begin{split}
&  \quad  4 A(T)C(T)\\ & =\left(M\int\limits_0^T \int\limits_{\Gamma_{2}} q_0^2(t) d\gamma dt+\int\limits_0^T \int\limits_{\Omega} u^2_{q_0}(t)dxdt\right)\left(\int\limits_0^T \int\limits_{\Omega}( u_{b}(t)+u_{ g}(t)-z_d(t))^2 dxdt\right)\\ & > \left(\int\limits_0^T \int\limits_{\Omega} u^2_{q_0}(t)dxdt \right)\left(\int\limits_0^T\int\limits_{\Omega}( u_{b}(t)+u_{ g}(t)-z_d(t))^2 dxdt \right)\\ &= || u_{q_{0}}||^{2}_{\mathcal{H}} || u_{b}+u_{g}-z_{d}||^{2}_{\mathcal{H}}\geq (u_{q_{0}},u_{b}+u_{g}-z_{d})^{2}_{\mathcal{H}}\\ & =\left(\int\limits_0^T \int\limits_{\Omega} u_{q_0}(t)( u_{b}(t)+u_{ g}(t)-z_d(t))dx dt\right)^{2}=(B(T))^{2}
\end{split}\end{equation*}\end{small}
we deduce that $(B(T))^{2}-4 A(T)C(T)<0$ and since $A(T)>0$, because $q_0\neq 0$, then there exists a unique $\overline{\lambda}(T)\in \mathbb{R}$ for each $T>0$, such that satisfies the problem (\ref{PControlH}), whose solution is given by the following expression:
\begin{equation}\label{lambda1}
\overline{\lambda}(T)=-\frac{B(T)}{2A(T)}=-\frac{\int\limits_0^T \int\limits_{\Omega}
u_{q_0}(t)( u_{b}(t) +u_{ g}(t)-z_d(t))dx dt}{M\int\limits_0^T
\int\limits_{\Gamma_{2}} q_0^2(t) d\gamma dt+\int\limits_0^T
\int\limits_{\Omega} u^2_{q_0}(t)dxdt}.
\end{equation}
Therefore, we have proved the following property.
\begin{thm}
For each $T>0$, there exists a unique solution $\overline{\lambda}(T)\in \mathbb{R}$ to the optimization problem (\ref{PControlH}).
\end{thm}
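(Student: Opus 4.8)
The plan is to reduce the optimization problem (\ref{PControlH}) to the minimization over $\mathbb{R}$ of a one–dimensional quadratic polynomial in $\lambda$, and then to invoke the elementary fact that a real quadratic $\lambda\mapsto a\lambda^{2}+b\lambda+c$ with $a>0$ is strictly convex and coercive, hence attains its infimum at exactly one point, namely $\lambda=-b/(2a)$.

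First I would exploit the linearity of the parabolic variational equality (\ref{Pvariacional}) together with uniqueness of its solution in order to superpose states: writing the solution for data $(b,\lambda q_{0},g)$ as $u_{b\lambda g}=u_{b}+u_{q}+u_{g}$, where $u_{b}$, $u_{q}$, $u_{g}$ solve the linear problems (\ref{Pvariacionalb}), (\ref{Pvariacionalq}), (\ref{Pvariacionalg}) respectively, and then, again by linearity of (\ref{Pvariacionalq}) in its right–hand side, $u_{q}=\lambda u_{q_{0}}$, with $u_{q_{0}}$ the solution of (\ref{Pvariacional}) for $q=q_{0}$, $b=g=0$. Substituting this decomposition into the definition (\ref{defHT}) of $H_{T}(\lambda)=J(\lambda q_{0})$ and expanding the square in $\|u_{\lambda q_{0}}-z_{d}\|_{\mathcal{H}}^{2}$ gives $H_{T}(\lambda)=\lambda^{2}A(T)+\lambda B(T)+C(T)$ with $A(T)$, $B(T)$, $C(T)$ as displayed; this step is purely algebraic.

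The only genuine point to verify is that $A(T)>0$. Since $A(T)=\tfrac{M}{2}\|q_{0}\|_{\mathcal{Q}}^{2}+\tfrac12\|u_{q_{0}}\|_{\mathcal{H}}^{2}$ with $M>0$, and the hypothesis $q_{0}\neq 0$ in $\mathcal{Q}=L^{2}(0,T;L^{2}(\Gamma_{2}))$ forces $\|q_{0}\|_{\mathcal{Q}}^{2}>0$, we get $A(T)>0$ (even discarding the second, nonnegative term). Hence $H_{T}$ is a strictly convex parabola on $\mathbb{R}$ tending to $+\infty$ as $|\lambda|\to\infty$, so it possesses a unique global minimizer, found by solving $H_{T}'(\lambda)=2A(T)\lambda+B(T)=0$; this yields $\overline{\lambda}(T)=-B(T)/(2A(T))$, i.e. the explicit formula (\ref{lambda1}), and proves existence and uniqueness for (\ref{PControlH}). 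I would also record, following the displayed estimate, that the Cauchy–Schwarz inequality in $\mathcal{H}$ gives $B(T)^{2}=(u_{q_{0}},u_{b}+u_{g}-z_{d})_{\mathcal{H}}^{2}\le\|u_{q_{0}}\|_{\mathcal{H}}^{2}\|u_{b}+u_{g}-z_{d}\|_{\mathcal{H}}^{2}\le 4A(T)C(T)$, with strict inequality whenever $u_{b}+u_{g}-z_{d}\not\equiv 0$ thanks to the $M\|q_{0}\|_{\mathcal{Q}}^{2}$ term, so that $H_{T}$ is nonnegative, consistent with $H_{T}:\mathbb{R}\to\mathbb{R}_{0}^{+}$.

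There is essentially no hard step here: once the linear superposition principle for the state equation is in place, the argument is a finite–dimensional quadratic minimization. The only things that require a little care are (i) justifying the decompositions $u_{b\lambda g}=u_{b}+u_{q}+u_{g}$ and $u_{q}=\lambda u_{q_{0}}$ from uniqueness for the linear problems (\ref{Pvariacionalb})--(\ref{Pvariacionalg}), and (ii) keeping in mind that the assumption $q_{0}\neq 0$ is precisely what makes the leading coefficient $A(T)$ strictly positive, so that one obtains a genuine minimizer rather than a degenerate affine or constant functional.
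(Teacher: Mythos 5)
Your proposal is correct and follows essentially the same route as the paper: superposition of the state into $u_b+\lambda u_{q_0}+u_g$, reduction of $H_T$ to the quadratic $\lambda^2A(T)+\lambda B(T)+C(T)$, and the observation that $A(T)\ge \tfrac{M}{2}\|q_0\|_{\mathcal Q}^2>0$ yields the unique minimizer $\overline{\lambda}(T)=-B(T)/(2A(T))$. Your side remark that the discriminant inequality $B(T)^2<4A(T)C(T)$ is strict only when $u_b+u_g-z_d\not\equiv 0$ is a correct refinement of the paper's display, but, as you note, it is not needed for existence and uniqueness.
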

Now, we will prove some monotonicity properties.
\begin{prop}\label{Prop2}
Let $q_{1}=\lambda_{1} q_{0}$ and $q_{2}=\lambda_{2} q_{0}$ ($q_{0}>0$), with $\lambda_{2}\leq \lambda_{1}$ and $g_{1}\leq g_{2}$ then
$ u_{b\lambda_1g_{1}}\leq u_{b\lambda_2g_{2}}$ in $\Omega\times [0,T]$.
\end{prop}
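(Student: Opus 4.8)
The plan is to establish this by a parabolic comparison (weak maximum) principle: I set up the problem solved by the difference of the two states and test it with the positive part of that difference.

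First I would put $w:=u_{b\lambda_1 g_1}-u_{b\lambda_2 g_2}$. Since both states solve \eqref{Pvariacional} with the same Dirichlet datum $b$ on $\Gamma_1$ and the same initial datum $v_b$, we have $w\in L^2(V_0)$, $\dot w\in L^2(V_0')$ and $w(0)=0$. Subtracting the two variational identities (recall $L(t,v)=(g(t),v)_H-(q(t),v)_Q$ and $q_i=\lambda_i q_0$) gives
\begin{equation*}
\langle \dot w(t),v\rangle+a(w(t),v)=(g_1(t)-g_2(t),v)_H-(\lambda_1-\lambda_2)(q_0(t),v)_Q,\qquad \forall v\in V_0,
\end{equation*}
for a.e. $t\in(0,T)$.

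Next I would test with $v=w^{+}(t):=\max\{w(t),0\}$. Because $w(t)\in V_0$ and truncation preserves the zero trace on $\Gamma_1$, we have $w^{+}(t)\in V_0$, so this choice is admissible. Using the standard chain rule for $t\mapsto\tfrac12\|w^{+}(t)\|_H^2$ (see \cite{B,DL}) one has $\langle \dot w(t),w^{+}(t)\rangle=\tfrac12\frac{d}{dt}\|w^{+}(t)\|_H^2$, while $a(w(t),w^{+}(t))=\|\nabla w^{+}(t)\|_H^2\ge 0$. For the right-hand side: $g_1\le g_2$ and $w^{+}\ge 0$ give $(g_1(t)-g_2(t),w^{+}(t))_H\le 0$; and $\lambda_2\le\lambda_1$ together with $q_0>0$ and $\gamma_0 w^{+}(t)\ge 0$ on $\Gamma_2$ give $(\lambda_1-\lambda_2)(q_0(t),w^{+}(t))_Q\ge 0$. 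Hence
\begin{equation*}
\frac{1}{2}\frac{d}{dt}\|w^{+}(t)\|_H^2+\|\nabla w^{+}(t)\|_H^2\le 0\qquad\text{for a.e. }t\in(0,T).
\end{equation*}
Integrating on $(0,t)$ and using $w^{+}(0)=0$ yields $\tfrac12\|w^{+}(t)\|_H^2+\int_0^t\|\nabla w^{+}(s)\|_H^2\,ds\le 0$, so $w^{+}(t)=0$ for a.e. $t$, i.e. $w\le 0$ a.e. in $\Omega\times(0,T)$, which is exactly $u_{b\lambda_1 g_1}\le u_{b\lambda_2 g_2}$.

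I expect the only delicate points to be the admissibility of $w^{+}(t)$ as a test function in $V_0$ (which rests on $w(t)|_{\Gamma_1}=0$, guaranteed since $u_{b\lambda_1 g_1}-v_b$ and $u_{b\lambda_2 g_2}-v_b$ both lie in $L^2(V_0)$) and the rigorous justification of $\langle \dot w,w^{+}\rangle=\tfrac12\frac{d}{dt}\|w^{+}\|_H^2$ for $V_0$-valued functions with $L^2(V_0')$ time-derivative; both are classical. Alternatively one could exploit the decomposition $u_{b\lambda g}=u_b+\lambda u_{q_0}+u_g$ from \eqref{Pvariacionalb}--\eqref{Pvariacionalg}, proving separately that $u_{q_0}\le 0$ and that $g\mapsto u_g$ is order-preserving, but the direct comparison argument above is shorter and self-contained.
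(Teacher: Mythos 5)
Your argument is correct and is essentially the paper's own proof: both form $w=u_{b\lambda_1 g_1}-u_{b\lambda_2 g_2}$, test the difference of the variational equalities with $w^{+}(t)\in V_0$, use $\langle\dot w,w^{+}\rangle=\tfrac12\tfrac{d}{dt}\|w^{+}\|_H^2$ together with $w^{+}(0)=0$ and the sign hypotheses $g_1\le g_2$, $\lambda_2\le\lambda_1$, $q_0>0$ to force $w^{+}\equiv 0$. No substantive difference from the paper's proof.
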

\begin{proof}
We define $w=u_{b\lambda_1
g_{1}}-u_{b\lambda_2 g_{2}}$ and we take $v=-w^+(t)\in V$  for $u_{b\lambda_1g_{1}}$ in (\ref{Pvariacional}) and $v=w^+(t)\in V$ for $u_{b\lambda_2 g_{2}}$ in (\ref{Pvariacional}) (for regularity of $w^+$ see \cite{KS}). Adding the variational equalities, we have
\begin{equation*}\begin{split}
 \langle \dot{u}_{b\lambda_2 g_2}(t)-\dot{u}_{b\lambda_1 g_1}(t),w^+(t)\rangle+a(u_{b\lambda_2 g_2}(t)-u_{b\lambda_1 g_1}(t),w^+(t))
 \\ =  \int_{\Omega}(g_{2}(t)-g_{1}(t)) w^{+}(t) dx +\left(\lambda_1-\lambda_2\right)\int\limits_{\Gamma_2}q_0(t)w^+(t)
d\gamma.\end{split}
\end{equation*}
next
\begin{equation*}\begin{split}
 - \langle\dot{w^+}(t),w^+(t)\rangle-a(w^+(t),w^+(t))
&=\int\limits_{\Omega}(g_{2}(t)-g_{1}(t)) w^{+}(t) dx \\ &+\left(\lambda_1-\lambda_2\right)\int\limits_{\Gamma_2}q_0(t)w^+(t)
d\gamma.
\end{split}\end{equation*}
Now, using that
$\langle\dot{w^+}(t),w^+(t)\rangle=\frac{1}{2}\frac{d}{dt}||w^+(t)||_H^2$ and integrating between $0$ and $T$, we prove
\begin{small}\begin{equation*}\begin{split}
\frac{1}{2}\left(||w^+(T)||_H^2-||w^+(0)||_H^2\right) +\int\limits_0^T ||w^+(t)||^2_{V_0}dt
&\leq \int\limits_0^T \int\limits_{\Omega}(g_{1}(t)-g_{2}(t)) w^{+}(t) dx dt\\& +\left(\lambda_2-\lambda_1\right)\int\limits_0^T\int\limits_{\Gamma_2}q_0(t)w^+(t)
d\gamma dt.
\end{split}\end{equation*}\end{small}
Since $w^+(0)=\left(u_{b\lambda_1 g_1}-u_{b\lambda_2
g_2}\right)^+(0)=\max\{0,\left(u_{b\lambda_1 g_1}-u_{b\lambda_2
g_2}\right)(0)\}=0$,
\begin{equation*}\begin{split}
 \frac{1}{2}||w^+(T)||_H^2 +\int\limits_0^T ||w^+(t)||^2_{V_0}dt
& \leq \int\limits_0^T \int\limits_{\Omega}(g_{1}(t)-g_{2}(t)) w^{+}(t) dx dt\\& +\left(\lambda_2-\lambda_1\right)\int\limits_0^T\int\limits_{\Gamma_2}q_0(t)w^+(t)
d\gamma dt\leq 0,
\end{split}\end{equation*}
by using that $g_{1}\leq g_{2}$, $\lambda_{2}\leq \lambda_{1}$ and $q_{0}>0$, here $||w^+||^2_{L^2(0,T;V_0)}
=0$, then $w^+\equiv 0$ in $\Omega\times [0,T]$ and therefore $w\leq 0$ in $\Omega\times [0,T]$, that is the thesis holds.
\end{proof}
\begin{cor} \label{Prop1} If $q_{1}=\lambda_{1} q_{0}$ and $q_{2}=\lambda_{2} q_{0}$ ($q_{0}>0$), with $\lambda_{2}\leq \lambda_{1}$ then $ u_{b\lambda_1g}\leq u_{b\lambda_2g}$ in $\Omega\times [0,T]$.
\end{cor}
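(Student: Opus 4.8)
The plan is to obtain this as an immediate specialization of Proposition \ref{Prop2}. Indeed, the corollary is precisely the statement of Proposition \ref{Prop2} in the particular case $g_{1}=g_{2}=g$: the hypothesis $g_{1}\leq g_{2}$ is then trivially satisfied (with equality), and the remaining hypotheses ($q_{0}>0$ and $\lambda_{2}\leq\lambda_{1}$) are exactly those assumed in the corollary. Hence Proposition \ref{Prop2} yields $u_{b\lambda_{1}g}\leq u_{b\lambda_{2}g}$ in $\Omega\times[0,T]$, which is the thesis.

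If one prefers a self-contained argument rather than a citation to the previous result, I would simply rerun the comparison computation of Proposition \ref{Prop2} with $g$ fixed. Namely, set $w=u_{b\lambda_{1}g}-u_{b\lambda_{2}g}$, test the variational equality (\ref{Pvariacional}) for $u_{b\lambda_{1}g}$ with $v=-w^{+}(t)\in V$ and the one for $u_{b\lambda_{2}g}$ with $v=w^{+}(t)\in V$ (using the regularity of $w^{+}$ as in \cite{KS}), and add. Since the internal energies now coincide, the only source term surviving is $(\lambda_{2}-\lambda_{1})\int_{\Gamma_{2}}q_{0}(t)w^{+}(t)\,d\gamma$; using $\langle\dot{w}^{+}(t),w^{+}(t)\rangle=\tfrac{1}{2}\tfrac{d}{dt}\|w^{+}(t)\|_{H}^{2}$, integrating over $[0,T]$, and recalling $w^{+}(0)=0$ since the initial data coincide, one gets
\[
\frac{1}{2}\|w^{+}(T)\|_{H}^{2}+\int_{0}^{T}\|w^{+}(t)\|_{V_{0}}^{2}\,dt\leq(\lambda_{2}-\lambda_{1})\int_{0}^{T}\int_{\Gamma_{2}}q_{0}(t)w^{+}(t)\,d\gamma\,dt\leq 0,
\]
because $\lambda_{2}\leq\lambda_{1}$, $q_{0}>0$ and $w^{+}\geq 0$. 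Therefore $\|w^{+}\|_{L^{2}(0,T;V_{0})}=0$, so $w^{+}\equiv 0$ in $\Omega\times[0,T]$ and $w\leq 0$, i.e. $u_{b\lambda_{1}g}\leq u_{b\lambda_{2}g}$.

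There is no real obstacle here: the result is a direct corollary, and the only point worth a word of care is checking that the two hypotheses genuinely needed from Proposition \ref{Prop2} are a subset of those in the corollary, which they are once $g_{1}=g_{2}$. I would therefore present the proof in a single line invoking Proposition \ref{Prop2} with $g_{1}=g_{2}=g$.
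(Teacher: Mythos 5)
Your proposal matches the paper's proof exactly: the paper also obtains the corollary by taking $g=g_{1}=g_{2}$ in Proposition \ref{Prop2}, and your optional self-contained rerun of the comparison argument is just that proof specialized. Nothing is missing.
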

\begin{proof}
This results taking $g=g_{1}=g_{2}$ in the proof of the Proposition \ref{Prop2}.
\end{proof}

\begin{rem}
The previous monotonicity properties are still true if we consider $\lambda_{1}\leq \lambda_{2}$ with $q_{0}<0$.
\end{rem}

\subsection{Real Neumann boundary optimal control problems in relation to the parabolic $\mathbf{S_{\alpha}}$}

If we consider the real Neumann boundary optimal control problem (\ref{PControlalfaH}) and for each $\alpha >0$, we denote by $u_{\alpha bqg}$ to the unique solution of the variational equality  (\ref{Palfavariacional}) for data $b$, $q$ and $g$ and we take $q=\lambda q_{0}$ for fixed $q_{0}\in Q$ $(q_{0}\neq 0)$ and $\lambda \in \mathbb{R}$, we can see that
\[
u_{\alpha b q g}(t)=u_{\alpha b\lambda g}(t)=u_{\alpha b}(t)+u_{\alpha q}(t)+u_{\alpha g}(t),\quad \forall x\in \Omega.
\]
where $u_{\alpha b},u_{\alpha q},u_{\alpha g}$ are the unique solution of (\ref{Palfavariacional}) for $q=g=0$, $b=g=0$ and $b=q=0$, respectively.

We note that, by the linearity, we can see that $u_{\alpha q}(t)=\lambda u_{\alpha q_{0}}(t)$, where $u_{\alpha q_{0}}$ is the unique solution of (\ref{Palfavariacional}) for $q=q_{0}$ and $b=g=0$. Next, for each $T>0$, the functional $H_{\alpha T}(\lambda)$ can be writen as
\begin{equation*} \begin{split}
H_{\alpha T}(\lambda)&=\frac{1}{2}\int\limits_0^T \int\limits_{\Omega} (u_{\alpha b}(t)
+\lambda u_{\alpha q_0}(t)+u_{\alpha g}(t)-z_d(t))^2 dx dt\\ & +\frac{M\lambda
^2}{2}\int\limits_0^T \int\limits_{\Gamma_{2}} q_0^2(t) d\gamma dt=\lambda^2 A_{\alpha}(T)+\lambda B_{\alpha}(T) +C_{\alpha}(T),
\end{split}\end{equation*}
where
$$A_{\alpha}(T)=\frac{M}{2}\int\limits_0^T \int\limits_{\Gamma_{2}} q_0^2(t) d\gamma dt+\frac{1}{2}\int\limits_0^T \int\limits_{\Omega} u^2_{\alpha q_0}(t)dxdt$$
$$B_{\alpha}(T)=\int\limits_0^T \int\limits_{\Omega} u_{\alpha q_0}(t)( u_{\alpha b}(t)+u_{\alpha g}(t)-z_d(t))dx dt$$
$$C_{\alpha}(T)=\frac{1}{2}\int\limits_0^T \int\limits_{\Omega}( u_{\alpha b}(t)+u_{\alpha g}(t)-z_d(t))^2 dxdt.$$
Now, taking into account that $A_{\alpha}(T)>0$, in similar way that the previous subsection, we can prove that $(B_{\alpha}(T))^{2}-4 A_{\alpha}(T)C_{\alpha}(T)<0$ and therefore there exists a unique solution $\overline{\lambda}_{\alpha}(T)\in \mathbb{R}$, for each $\alpha >0$ and for each $T>0$, for the optimal control problem (\ref{PControlalfaH}), which is given by the following expression:
\begin{equation}\label{lambda3}
\overline{\lambda}_{\alpha}(T)=-\frac{\int\limits_0^T \int\limits_{\Omega}
u_{\alpha q_0}(t)( u_{\alpha b}(t) +u_{\alpha g}(t)-z_d(t))dx
dt}{M\int\limits_0^T \int\limits_{\Gamma_{2}} q_0^2(t) d\gamma
dt+\int\limits_0^T \int\limits_{\Omega} u^2_{\alpha q_0}(t)dxdt}.
\end{equation}
Therefore, we have proved the following property.
\begin{thm}
For each $\alpha >0$ and $T>0$, there exists a unique solution $\overline{\lambda}_{\alpha}(T)\in \mathbb{R}$ to the optimization problem (\ref{PControlalfaH}).
\end{thm}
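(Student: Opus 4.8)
The plan is to mimic almost verbatim the argument already carried out for the parabolic system $\mathbf{S}$ in the previous subsection, replacing throughout the bilinear form $a$ by $a_{\alpha}$ and the space $V_{0}$ by $V$. First I would exploit the linearity of the variational equality (\ref{Palfavariacional}): since the maps $q\mapsto u_{\alpha q}$, $b\mapsto u_{\alpha b}$ and $g\mapsto u_{\alpha g}$ are linear (resp. affine) and the solution of (\ref{Palfavariacional}) is unique, the superposition principle gives $u_{\alpha b\lambda g}(t)=u_{\alpha b}(t)+\lambda u_{\alpha q_{0}}(t)+u_{\alpha g}(t)$, where $u_{\alpha q_{0}}$ solves (\ref{Palfavariacional}) with $q=q_{0}$, $b=g=0$. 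Substituting this into $H_{\alpha T}(\lambda)=J_{\alpha}(\lambda q_{0})$ (see (\ref{defHT}), (\ref{defJyJalfa}ii)) and expanding the squared $\mathcal{H}$-norm and the $\mathcal{Q}$-norm yields exactly $H_{\alpha T}(\lambda)=\lambda^{2}A_{\alpha}(T)+\lambda B_{\alpha}(T)+C_{\alpha}(T)$, with $A_{\alpha}(T)$, $B_{\alpha}(T)$, $C_{\alpha}(T)$ the quantities displayed above.

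Next I would check $A_{\alpha}(T)>0$: since $q_{0}\ne 0$ in $\mathcal{Q}$, one has $\int_{0}^{T}\int_{\Gamma_{2}}q_{0}^{2}\,d\gamma\,dt>0$, hence $A_{\alpha}(T)\ge\tfrac{M}{2}\|q_{0}\|_{\mathcal{Q}}^{2}>0$. Therefore $H_{\alpha T}$ is a strictly convex scalar quadratic with positive leading coefficient, so it admits a unique global minimizer, namely $\overline{\lambda}_{\alpha}(T)=-B_{\alpha}(T)/(2A_{\alpha}(T))$, which is precisely (\ref{lambda3}). As a complement (confirming also that $H_{\alpha T}\ge 0$, consistent with $J_{\alpha}:\mathcal{Q}\to\mathbb{R}_{0}^{+}$) I would reproduce the Cauchy--Schwarz estimate used for $\mathbf{S}$: dropping the strictly positive term $M\int_{0}^{T}\int_{\Gamma_{2}}q_{0}^{2}$ inside $4A_{\alpha}(T)C_{\alpha}(T)$ and applying the Cauchy--Schwarz inequality in $\mathcal{H}$ to $(u_{\alpha q_{0}},\,u_{\alpha b}+u_{\alpha g}-z_{d})_{\mathcal{H}}$ gives $4A_{\alpha}(T)C_{\alpha}(T)>(B_{\alpha}(T))^{2}$, i.e. the discriminant is strictly negative; together with $A_{\alpha}(T)>0$ this again gives existence and uniqueness of $\overline{\lambda}_{\alpha}(T)$.

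Alternatively, and more conceptually, one may simply invoke the strict convexity of $J_{\alpha}$ on $\mathcal{Q}$ (the analogue of Lemma~\ref{lema1}(v), stated for $J_{\alpha}$ right after Lemma~\ref{lema2}): its restriction to the closed line $\mathcal{Q}_{0}=\{\lambda q_{0}:\lambda\in\mathbb{R}\}$ is strictly convex in $\lambda$ and coercive (the coercivity coming from the term $\tfrac{M}{2}\lambda^{2}\|q_{0}\|_{\mathcal{Q}}^{2}$, which is where $q_{0}\ne 0$ is used), so the direct method yields a unique minimizer. I do not expect any genuine obstacle here; the only point deserving a line of care is the rigorous justification of the linear superposition $u_{\alpha b\lambda g}=u_{\alpha b}+\lambda u_{\alpha q_{0}}+u_{\alpha g}$ through uniqueness for (\ref{Palfavariacional}), everything else being the routine analysis of a positive-definite scalar quadratic, structurally identical to the already-proved theorem for the system $\mathbf{S}$.
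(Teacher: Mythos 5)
Your proposal is correct and follows essentially the same route as the paper: the linear superposition $u_{\alpha b\lambda g}=u_{\alpha b}+\lambda u_{\alpha q_{0}}+u_{\alpha g}$, the reduction of $H_{\alpha T}$ to the scalar quadratic $\lambda^{2}A_{\alpha}(T)+\lambda B_{\alpha}(T)+C_{\alpha}(T)$ with $A_{\alpha}(T)>0$, and the Cauchy--Schwarz discriminant estimate carried over from the subsection on $\mathbf{S}$. Your observation that $A_{\alpha}(T)>0$ alone already gives existence and uniqueness of the minimizer (the negative discriminant only confirming positivity of the minimum value) is a correct and slightly cleaner reading of the same argument.
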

Now, in similar way to the previous subsection, we can prove the following monotonicity properties, whose proof is omitted.
\begin{prop} \label{Propalfa1}
For each $\alpha >0$, if $q_1=\lambda_1 q_0$ and $q_2=\lambda_2 q_0$ ($q_{0}>0$), with $\lambda_2\leq \lambda_1,$ $g_1\leq g_2,$ $b_1\leq b_2$ on $\Gamma_1$ and initial conditions $v_{b_1}\leq v_{b_2}$ then $u_{\alpha b_1\lambda_1g_1}\leq u_{\alpha b_2\lambda_2g_2}$ in $\Omega\times [0,T]$.
\end{prop}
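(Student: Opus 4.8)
The plan is to prove the inequality by a parabolic comparison (maximum) argument, entirely parallel to Proposition~\ref{Prop2}, but now carried out in the full space $V=H^1(\Omega)$ with the coercive Robin form $a_\alpha$; working in $V$ (rather than $V_0$) is essential because here $b_1$ and $b_2$ may differ, so the difference of the two states no longer has vanishing trace on $\Gamma_1$. First I would set $w=u_{\alpha b_1\lambda_1 g_1}-u_{\alpha b_2\lambda_2 g_2}\in L^2(0,T;V)$, note $\dot w\in L^2(0,T;V')$, and use $w^+(t)$ as a test function; as in Proposition~\ref{Prop2}, the regularity of $w^+$ and the chain rule $\langle\dot w(t),w^+(t)\rangle=\tfrac12\tfrac{d}{dt}\|w^+(t)\|_H^2$ are justified via \cite{KS}. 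I would test (\ref{Palfavariacional}) for $u_{\alpha b_1\lambda_1 g_1}$ with $v=-w^+(t)$ and (\ref{Palfavariacional}) for $u_{\alpha b_2\lambda_2 g_2}$ with $v=w^+(t)$, add the two identities, and use $\nabla w\cdot\nabla w^+=|\nabla w^+|^2$ a.e.\ in $\Omega$ together with $\gamma_0(w)\,\gamma_0(w^+)=(\gamma_0 w^+)^2$ a.e.\ on $\Gamma_1$ to replace $a_\alpha(w,w^+)$ by $a_\alpha(w^+,w^+)$, recalling that $L_\alpha(t,v)=(g(t),v)_H-(q(t),v)_Q+\alpha\int_{\Gamma_1}b\,v\,d\gamma$ and $q_i=\lambda_i q_0$.

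This produces, at a.e.\ $t\in(0,T)$, the identity
\[
-\tfrac12\tfrac{d}{dt}\|w^+(t)\|_H^2-a_\alpha(w^+(t),w^+(t))=(g_2(t)-g_1(t),w^+(t))_H+(\lambda_1-\lambda_2)(q_0(t),w^+(t))_Q+\alpha\!\int_{\Gamma_1}(b_2-b_1)\,w^+(t)\,d\gamma .
\]
The next step is the sign check: since $g_2\ge g_1$, $\lambda_1\ge\lambda_2$ with $q_0>0$, and $b_2\ge b_1$ on $\Gamma_1$ with $\alpha>0$, while $w^+\ge 0$ in $\Omega$ and its trace is $\ge 0$ on $\Gamma_1$ and $\Gamma_2$, the whole right-hand side is $\ge 0$, whence $\tfrac12\tfrac{d}{dt}\|w^+(t)\|_H^2+a_\alpha(w^+(t),w^+(t))\le 0$ for a.e.\ $t$.

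Finally I would integrate on $[0,T]$ and use the initial condition: $w^+(0)=\big(u_{\alpha b_1\lambda_1 g_1}(0)-u_{\alpha b_2\lambda_2 g_2}(0)\big)^+=(v_{b_1}-v_{b_2})^+=0$ because $v_{b_1}\le v_{b_2}$, so that
\[
\tfrac12\|w^+(T)\|_H^2+\int_0^T a_\alpha(w^+(t),w^+(t))\,dt\le 0 .
\]
Both summands on the left are nonnegative, the integral by the coerciveness of $a_\alpha$; hence both vanish, and coerciveness then forces $\|w^+\|_{L^2(0,T;V)}=0$, so $w^+\equiv 0$ in $\Omega\times[0,T]$ and therefore $u_{\alpha b_1\lambda_1 g_1}\le u_{\alpha b_2\lambda_2 g_2}$. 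I do not expect a serious obstacle: this is simply the $S_\alpha$-counterpart of Proposition~\ref{Prop2}, the only genuinely new features being the Robin boundary term $\alpha\int_{\Gamma_1}(b_2-b_1)w^+\,d\gamma$ (absorbed by the hypothesis $b_1\le b_2$) and the non-homogeneous initial data (absorbed by $v_{b_1}\le v_{b_2}$, which is exactly what makes $w^+(0)=0$). The one technical point requiring the same care as in Proposition~\ref{Prop2} is the admissibility of $w^+$ as a test function and the chain rule for $\tfrac{d}{dt}\|w^+(t)\|_H^2$, handled by \cite{KS}; one should also record that the pointwise gradient and trace identities for $w^+$ hold almost everywhere.
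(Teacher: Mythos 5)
Your proof is correct and is exactly the argument the paper intends: the paper omits the proof of this proposition, stating only that it follows ``in similar way'' to Proposition~\ref{Prop2}, and your adaptation — testing with $\pm w^+$, using $a_\alpha(w,w^+)=a_\alpha(w^+,w^+)$, absorbing the new Robin term via $b_1\le b_2$ and the initial layer via $v_{b_1}\le v_{b_2}$, then concluding by coerciveness of $a_\alpha$ — is precisely that analogue. No gaps.
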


\subsection{Real Neumann boundary optimal control problem in relation to elliptic system $\mathbf{P}$}

Here, we consider the stationary real Neumann boundary optimal control problem (\ref{PControlHEstacionario}). If we denote by $u_{\infty bqg}$ to the unique solution of the variational equality (\ref{PvariacionalEstacionario}) for data $b$, $q$ and $g$. and if we consider $q=\lambda q^{*}_{0}$ for fixed $q^{*}_{0}\in Q$ $(q^{*}_{0}\neq 0)$ and $\lambda \in \mathbb{R}$, we can see that
\[
u_{\infty b q g}=u_{\infty b\lambda g}=u_{\infty b}+u_{\infty q}+u_{\infty g},\quad \forall x\in \Omega.
\]
where $u_{\infty b},u_{\infty q},u_{\infty g}$ are the unique solutions of the variational equality (\ref{PvariacionalEstacionario}) for $q=g=0$, $b=g=0$ and $b=q=0$, respectively.
\newline
Now, taking into account that $u_{\infty q}=\lambda u_{\infty q^{*}_{0}}$, where $u_{\infty q^{*}_{0}}$ is the solution of (\ref{PvariacionalEstacionario}) for $q=q^{*}_{0}$ and $b=g=0$. Next, the functional $H(\lambda)$ can be writen as
\begin{equation*} H(\lambda)=\frac{1}{2} \int\limits_{\Omega} (u_{\infty b}
+\lambda u_{\infty q^{*}_0}+u_{\infty g}-z_d)^2 dx +\frac{M\lambda
^2}{2}\int\limits_{\Gamma_{2}} (q^{*}_0)^2 d\gamma.
\end{equation*}
Therefore $H(\lambda)=\lambda^{2}A+\lambda B+C$, where
$$A=\frac{M}{2} \int\limits_{\Gamma_{2}} (q^{*}_0)^2 d\gamma +\frac{1}{2} \int\limits_{\Omega} u^2_{\infty q^{*}_{0}} dx,\quad B=\int\limits_{\Omega} u_{\infty q^{*}_{0}}(u_{\infty b}+u_{\infty g}-z_d)dx$$
$$C=\frac{1}{2}\int\limits_{\Omega}(u_{\infty b}+u_{\infty g}-z_d)^2 dx.$$
Here, since $B^{2}-4AC<0$, there exists a unique $\overline{\lambda}_{\infty}\in\mathbb{R}$ such that satisfies the problem (\ref{PControlHEstacionario}), that is
\begin{equation}\label{lambda2}
\overline{\lambda}_{\infty}=-\frac{B}{2A}=-\frac{\int\limits_{\Omega}
u_{\infty q^{*}_{0}}( u_{\infty b} +u_{\infty g}-z_d)dx}{M
\int\limits_{\Gamma_{2}} (q^{*}_0)^2 d\gamma +\int\limits_{\Omega} u^2_{\infty q^{*}_0}dx}
\end{equation}
Therefore, we have proved the following theorem.
\begin{thm}
There exists a unique solution $\overline{\lambda}_{\infty}\in \mathbb{R}$ to the optimization problem (\ref{PControlHEstacionario}).
\end{thm}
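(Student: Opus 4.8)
The plan is to mimic exactly the argument already carried out for the parabolic problems in the two previous subsections, since the structure is identical. First I would decompose the elliptic state by linearity: writing $q = \lambda q^{*}_{0}$ and using that the variational equality $(\ref{PvariacionalEstacionario})$ is affine in the data, one has $u_{\infty b\lambda g} = u_{\infty b} + \lambda u_{\infty q^{*}_{0}} + u_{\infty g}$, where the three pieces solve $(\ref{PvariacionalEstacionario})$ with only one of $b$, $q^{*}_0$, $g$ switched on. This decomposition is already stated in the excerpt, so I would only need to invoke it. Substituting into $H(\lambda)=J^{*}(\lambda q^{*}_0)$ from $(\ref{defJyJalfaEstacionario})$i and $(\ref{defH})$ shows that $H$ is a quadratic polynomial in the real variable $\lambda$, $H(\lambda)=\lambda^{2}A+\lambda B+C$, with $A$, $B$, $C$ the explicit integrals written just above the statement.

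The heart of the proof is then a one-variable calculus fact: a quadratic $\lambda \mapsto \lambda^{2}A+\lambda B+C$ with $A>0$ attains a unique minimizer at $\overline{\lambda}_{\infty}=-B/(2A)$. So I would first check $A>0$: since $q^{*}_{0}\neq 0$ in $Q$, the term $\frac{M}{2}\int_{\Gamma_2}(q^{*}_0)^2\,d\gamma$ is strictly positive (here $M>0$), hence $A>0$ and $H$ is strictly convex and coercive on $\mathbb{R}$, giving existence and uniqueness of the minimizer. One does not even need the discriminant condition $B^{2}-4AC<0$ for existence and uniqueness of the \emph{argmin} — that inequality (which follows from Cauchy--Schwarz applied to $(u_{\infty q^{*}_0}, u_{\infty b}+u_{\infty g}-z_d)_H$ together with the strict positivity of the boundary term, exactly as in the parabolic case) only guarantees that the minimum value $H(\overline{\lambda}_{\infty})$ is positive, which is consistent with $J^{*}$ being $\mathbb{R}_0^+$-valued. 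I would include the Cauchy--Schwarz computation for completeness, since it is the elliptic analogue of the displayed chain of inequalities for $4A(T)C(T)$, but flag that $A>0$ alone suffices for the stated conclusion.

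Concretely the steps, in order: (1) record the linear decomposition of $u_{\infty b\lambda g}$ and the scaling $u_{\infty q}=\lambda u_{\infty q^{*}_0}$; (2) expand $H(\lambda)=\tfrac12\|u_{\infty b}+\lambda u_{\infty q^{*}_0}+u_{\infty g}-z_d\|_H^2+\tfrac{M}{2}\lambda^2\|q^{*}_0\|_Q^2$ and collect powers of $\lambda$ to identify $A$, $B$, $C$; (3) observe $A>0$ because $q^{*}_0\neq 0$ and $M>0$, so $H$ is a strictly convex coercive quadratic on $\mathbb{R}$; (4) conclude there is a unique $\overline{\lambda}_{\infty}\in\mathbb{R}$ minimizing $H$, given by $H'(\overline{\lambda}_{\infty})=0$, i.e. $\overline{\lambda}_{\infty}=-B/(2A)$, which is formula $(\ref{lambda2})$; (5) optionally, verify $B^2-4AC<0$ via Cauchy--Schwarz to record that the optimal value is strictly positive.

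I do not anticipate a genuine obstacle: the only point requiring the slightest care is making sure $A$ is strictly positive rather than merely nonnegative — this is where the hypothesis $q^{*}_0\neq 0$ (and $M>0$) is used, and without it the quadratic could degenerate to a non-strictly-convex or even linear function with no minimizer. Everything else is the elementary theory of quadratic functions of one real variable, and the elliptic variational equality $(\ref{PvariacionalEstacionario})$ is already known to be well posed (cited at the start of the paper), so the three component solutions $u_{\infty b}$, $u_{\infty q^{*}_0}$, $u_{\infty g}$ exist and lie in $H$, making all the integrals defining $A$, $B$, $C$ finite.
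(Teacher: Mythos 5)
Your proposal is correct and follows essentially the same route as the paper: decompose the elliptic state by linearity, write $H(\lambda)=\lambda^{2}A+\lambda B+C$, and minimize the quadratic to obtain $\overline{\lambda}_{\infty}=-B/(2A)$. Your remark that $A>0$ (from $M>0$ and $q^{*}_{0}\neq 0$) already suffices for existence and uniqueness of the minimizer, with the discriminant condition $B^{2}-4AC<0$ only ensuring positivity of the minimum value, is a correct and slightly cleaner reading of the argument than the paper's emphasis on the discriminant.
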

Now, we will give some monotonicity properties whose proof is omitted.
\begin{prop}
Let $q_{1}=\lambda_{1} q^{*}_{0}$ and $q_{2}=\lambda_{2} q^{*}_{0}$ ($q^{*}_{0}>0$), with $\lambda_2\leq \lambda_1$ and $g_{1}\leq g_{2}$ then
$ u_{\infty b\lambda_1g_{1}}\leq u_{\infty b\lambda_2g_{2}}$.
\end{prop}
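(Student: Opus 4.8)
The plan is to transcribe the argument of Proposition~\ref{Prop2} into the stationary setting, using the elliptic variational equality (\ref{PvariacionalEstacionario}) in place of the parabolic one and simply discarding the time variable and the term $\langle\dot{w}^+,w^+\rangle$. Write $u_1=u_{\infty b\lambda_1 g_1}$ and $u_2=u_{\infty b\lambda_2 g_2}$. By (\ref{PvariacionalEstacionario}) and the linearity already exploited above, $u_1,u_2\in K$ and, for every $v\in V_0$,
\[
a(u_1,v)=(g_1,v)_H-\lambda_1(q^*_0,v)_Q,\qquad a(u_2,v)=(g_2,v)_H-\lambda_2(q^*_0,v)_Q .
\]
Since both $u_1$ and $u_2$ carry the same Dirichlet datum $b$ on $\Gamma_1$, their difference $w=u_1-u_2$ lies in $V_0$, and because $w|_{\Gamma_1}=0$ its positive part $w^+$ is again admissible, $w^+\in V_0$ (for the regularity of $w^+$ see \cite{KS}).

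First I would subtract the two identities and test with $v=w^+$, which gives
\[
a(w,w^+)=(g_1-g_2,w^+)_H-(\lambda_1-\lambda_2)(q^*_0,w^+)_Q .
\]
Then I would use that $\nabla w\cdot\nabla w^+=|\nabla w^+|^2$ a.e.\ in $\Omega$, so that $a(w,w^+)=a(w^+,w^+)=\|w^+\|_{V_0}^2$, the last equality being the coercivity of $a(\cdot,\cdot)$ on $V_0$ (which holds since $|\Gamma_1|>0$, via Poincar\'e's inequality). On the right-hand side, $g_1\le g_2$ together with $w^+\ge 0$ gives $(g_1-g_2,w^+)_H\le 0$, while $\lambda_2\le\lambda_1$, $q^*_0>0$ and $w^+\ge 0$ give $(\lambda_1-\lambda_2)(q^*_0,w^+)_Q\ge 0$; hence the whole right-hand side is $\le 0$. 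Consequently $\|w^+\|_{V_0}^2\le 0$, so $w^+\equiv 0$ in $\Omega$, i.e.\ $w\le 0$, which is precisely $u_{\infty b\lambda_1 g_1}\le u_{\infty b\lambda_2 g_2}$.

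The only points that require any care are exactly the two already handled in Proposition~\ref{Prop2}: that $w^+$ is a legitimate test function in $V_0$ (which rests on $w|_{\Gamma_1}=0$ together with the Stampacchia-type chain rule for the truncation) and the identity $a(w,w^+)=\|\nabla w^+\|_H^2$; once these are in place the sign bookkeeping on the right-hand side is immediate, so I do not expect a genuine obstacle — the elliptic case is in fact milder than the parabolic one because no term $\langle\dot{w}^+,w^+\rangle$ must be controlled and no integration in time is needed. Finally, just as in the remark after Corollary~\ref{Prop1}, the same conclusion holds with the inequalities on $\lambda$ reversed when $q^*_0<0$.
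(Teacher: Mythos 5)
Your proof is correct and is exactly the argument the paper intends: the paper omits the proof of this elliptic proposition and points to the parabolic case (Proposition on monotonicity for $S$), and your transcription — testing the difference of the variational equalities (\ref{PvariacionalEstacionario}) with $w^+\in V_0$, using $a(w,w^+)=\|\nabla w^+\|_H^2$ and coercivity on $V_0$ from $|\Gamma_1|>0$ — is the straightforward stationary version of that proof, with the sign bookkeeping handled correctly. No gaps.
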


\subsection{Real Neumann boundary optimal control problems in relation to the elliptic system $\mathbf{P_{\alpha}}$}

We consider the stationary real Neumann boundary optimal control problem (\ref{PControlalfaHEstacionario}) and we denote  by $u_{\infty\alpha bqg}$ to the unique solution to the variational equality (\ref{PalfavariacionalEstacionario}) for data $b$, $q$ and $g$. If we consider $q=\lambda q^{*}_{0}$ for fixed $q^{*}_{0}\in Q$ $(q^{*}_{0}\neq 0)$ and $\lambda \in \mathbb{R}$, we can see that
\[
u_{\infty\alpha b q g}=u_{\infty\alpha b\lambda g}=u_{\infty\alpha b}+u_{\infty\alpha q}+u_{\infty\alpha g},\quad \forall x\in \Omega.
\]
where $u_{\infty\alpha b},u_{\infty\alpha q},u_{\infty\alpha g}$ are the unique solutions of the variational equality (\ref{PalfavariacionalEstacionario}) for $q=g=0$, $b=g=0$ and $b=q=0$, respectively.
\newline
Now, taking into account that $u_{\infty\alpha q}=\lambda u_{\infty\alpha q^{*}_{0}}$, where $u_{\infty\alpha q^{*}_{0}}$ is the solution of (\ref{PalfavariacionalEstacionario}) for $q=q^{*}_{0}$ and $b=g=0$, the functional $H_{\alpha}(\lambda)$ can be writen as
\begin{small}\begin{equation*} H_{\alpha}(\lambda)=\frac{1}{2} \int\limits_{\Omega} (u_{\infty\alpha b}
+\lambda u_{\infty\alpha q^{*}_0}+u_{\infty\alpha g}-z_d)^2 dx +\frac{M\lambda
^2}{2}\int\limits_{\Gamma_{2}} (q^{*}_0)^2 d\gamma=\lambda^{2}A_{\alpha}+\lambda B_{\alpha}+C_{\alpha},
\end{equation*}\end{small}
where
$$A_{\alpha}=\frac{M}{2} \int\limits_{\Gamma_{2}} (q^{*}_0)^2 d\gamma +\frac{1}{2} \int\limits_{\Omega} u^2_{\infty\alpha q^{*}_0} dx,\quad B_{\alpha}=\int\limits_{\Omega} u_{\infty\alpha q^{*}_0}(u_{\infty\alpha b}+u_{\infty\alpha g}-z_d)dx$$
$$C_{\alpha}=\frac{1}{2}\int\limits_{\Omega}(u_{\infty\alpha b}+u_{\infty\alpha g}-z_d)^2 dx.$$
Here, since $B_{\alpha}^{2}-4A_{\alpha}C_{\alpha}<0$, there exists a unique $\overline{\lambda}_{\alpha}\in \mathbb{R}$ such that satisfies the optimization problem (\ref{PControlalfaHEstacionario}), that is
\begin{equation}\label{lambda2alfa}
\overline{\lambda}_{\alpha}=-\frac{B_{\alpha}}{2A_{\alpha}}=-\frac{\int\limits_{\Omega}
u_{\infty\alpha q^{*}_0}( u_{\infty\alpha b} +u_{\infty\alpha g}-z_d)dx}{M
\int\limits_{\Gamma_{2}} (q^{*}_0)^2 d\gamma +\int\limits_{\Omega} u^2_{\infty\alpha q^{*}_0}dx}.
\end{equation}
Therefore, we have proved the following theorem.
\begin{thm}
For each $\alpha >0$, there exists a unique solution $\overline{\lambda}_{\alpha}\in \mathbb{R}$ to the optimization problem (\ref{PControlalfaHEstacionario}).
\end{thm}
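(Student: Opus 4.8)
The plan is to follow exactly the pattern established for the parabolic case in Subsection 5.1 and for the elliptic problem $P$ in Subsection 5.3, transposing it to the Robin elliptic problem $P_{\alpha}$. First I would observe that, because the variational equality (\ref{PalfavariacionalEstacionario}) is linear in its data, the superposition principle applies: writing $q=\lambda q^{*}_{0}$ we split $u_{\infty\alpha bqg}=u_{\infty\alpha b}+u_{\infty\alpha q}+u_{\infty\alpha g}$ and, by linearity in $q$, $u_{\infty\alpha q}=\lambda\,u_{\infty\alpha q^{*}_{0}}$, exactly as done in the excerpt just before the statement. Substituting this decomposition into $H_{\alpha}(\lambda)=J^{*}_{\alpha}(\lambda q^{*}_{0})$ from (\ref{defHEstacionario})--(\ref{defJyJalfaEstacionario}ii) and expanding the squared $H$-norm shows that $H_{\alpha}$ is a quadratic polynomial $\lambda^{2}A_{\alpha}+\lambda B_{\alpha}+C_{\alpha}$ with the coefficients displayed above the statement.

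The core of the argument is then to show $A_{\alpha}>0$ and $B_{\alpha}^{2}-4A_{\alpha}C_{\alpha}<0$, which together force $H_{\alpha}$ to be a strictly convex parabola in $\lambda$ with a unique global minimizer, given in closed form by $\overline{\lambda}_{\alpha}=-B_{\alpha}/(2A_{\alpha})$, i.e. formula (\ref{lambda2alfa}). The positivity $A_{\alpha}>0$ is immediate: $A_{\alpha}=\tfrac{M}{2}\|q^{*}_{0}\|_{Q}^{2}+\tfrac12\|u_{\infty\alpha q^{*}_{0}}\|_{H}^{2}\ge \tfrac{M}{2}\|q^{*}_{0}\|_{Q}^{2}>0$ since $M>0$ and $q^{*}_{0}\neq 0$. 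For the discriminant I would mimic the chain of inequalities used for $P$: estimate
\begin{equation*}
4A_{\alpha}C_{\alpha}=\Big(M\|q^{*}_{0}\|_{Q}^{2}+\|u_{\infty\alpha q^{*}_{0}}\|_{H}^{2}\Big)\|u_{\infty\alpha b}+u_{\infty\alpha g}-z_{d}\|_{H}^{2}
> \|u_{\infty\alpha q^{*}_{0}}\|_{H}^{2}\,\|u_{\infty\alpha b}+u_{\infty\alpha g}-z_{d}\|_{H}^{2}
\ge (u_{\infty\alpha q^{*}_{0}},\,u_{\infty\alpha b}+u_{\infty\alpha g}-z_{d})_{H}^{2}=B_{\alpha}^{2},
\end{equation*}
where the strict inequality again uses $M\|q^{*}_{0}\|_{Q}^{2}>0$ and the last inequality is Cauchy--Schwarz in $H$. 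Hence $B_{\alpha}^{2}-4A_{\alpha}C_{\alpha}<0$, the quadratic has no real roots, and since its leading coefficient $A_{\alpha}$ is positive it attains a unique minimum at $\overline{\lambda}_{\alpha}=-B_{\alpha}/(2A_{\alpha})$.

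There is essentially no serious obstacle here: the only point requiring a word of care is the strict sign in the discriminant inequality, which must come from the term $M\|q^{*}_{0}\|_{Q}^{2}>0$ (one cannot rely on $\|u_{\infty\alpha q^{*}_{0}}\|_{H}>0$ alone, since in principle $u_{\infty\alpha q^{*}_{0}}$ could vanish). This is exactly the role played by the regularization constant $M$, and it is handled identically in all four subsections of Section~5. The existence and uniqueness of $u_{\infty\alpha b},u_{\infty\alpha q^{*}_{0}},u_{\infty\alpha g}$ themselves are guaranteed by the classical Lax--Milgram theory for the coercive bilinear form $a_{\alpha}$ on $V$, already invoked for (\ref{PalfavariacionalEstacionario}). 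Thus the proof is a direct transcription of the elliptic-$P$ argument with $a$ replaced by $a_{\alpha}$, $V_{0}$ by $V$, and the decomposition of $u_{\infty\alpha bqg}$ in place of that of $u_{\infty bqg}$, which is presumably why the authors state that the proof is omitted.
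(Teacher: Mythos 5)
Your proposal is correct and follows essentially the same route as the paper: decompose $u_{\infty\alpha bqg}$ by linearity, observe that $H_{\alpha}$ is the quadratic $\lambda^{2}A_{\alpha}+\lambda B_{\alpha}+C_{\alpha}$ with $A_{\alpha}>0$, and obtain the negative discriminant from Cauchy--Schwarz plus the strictly positive term $M\|q^{*}_{0}\|_{Q}^{2}$, yielding the unique minimizer $\overline{\lambda}_{\alpha}=-B_{\alpha}/(2A_{\alpha})$. This is precisely the argument the authors carry out in detail in Subsection 5.1 and then invoke ``in similar way'' here, so your write-up fills in the omitted proof as intended.
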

Now, we will give some monotonicity properties whose proof is omitted.
\begin{prop}
For each $\alpha >0$, if $q_1=\lambda_1 q^{*}_0$ and  $q_2=\lambda_2 q^{*}_0$ ($q^{*}_{0}>0$), with $\lambda_2\leq \lambda_1$, $g_1\leq g_2$, $b_1\leq b_2$ on $\Gamma_1$ then $u_{\alpha b_1\lambda_1g_1}\leq u_{\alpha b_2\lambda_2g_2}$ in $\Omega$.
\end{prop}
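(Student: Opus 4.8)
The plan is to follow verbatim the scheme of Proposition~\ref{Prop2} (and of Proposition~\ref{Propalfa1}), now for the elliptic system $P_{\alpha}$, where the absence of the time derivative makes the argument shorter. Denote by $u_{1}$ and $u_{2}$ the unique solutions of the variational equality (\ref{PalfavariacionalEstacionario}) corresponding to the data $(b_{1},\lambda_{1}q^{*}_{0},g_{1})$ and $(b_{2},\lambda_{2}q^{*}_{0},g_{2})$, respectively. Subtracting the two identities and writing $w=u_{1}-u_{2}\in V$, one gets
\begin{equation*}
a(w,v)+\alpha\int_{\Gamma_{1}}w v\,d\gamma=(g_{1}-g_{2},v)_{H}-(\lambda_{1}-\lambda_{2})\int_{\Gamma_{2}}q^{*}_{0}v\,d\gamma+\alpha\int_{\Gamma_{1}}(b_{1}-b_{2})v\,d\gamma,\quad\forall v\in V.
\end{equation*}

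First I would insert $v=w^{+}=\max\{0,w\}\in V$ as test function; its $H^{1}(\Omega)$-regularity together with $\nabla w^{+}=\chi_{\{w>0\}}\nabla w$ is justified exactly as in Proposition~\ref{Prop2} (see \cite{KS}). Using $\nabla w\cdot\nabla w^{+}=|\nabla w^{+}|^{2}$ in $\Omega$ and $w\,w^{+}=(w^{+})^{2}$ on $\Gamma_{1}$, the left-hand side equals $a_{\alpha}(w^{+},w^{+})=\|\nabla w^{+}\|_{H}^{2}+\alpha\|w^{+}\|_{L^{2}(\Gamma_{1})}^{2}\geq 0$.

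Next I would check that the right-hand side is nonpositive: since $w^{+}\geq 0$ in $\Omega$ and its traces on $\Gamma_{1}$ and $\Gamma_{2}$ are nonnegative, the hypotheses $g_{1}\leq g_{2}$, $\lambda_{2}\leq\lambda_{1}$ with $q^{*}_{0}>0$, and $b_{1}\leq b_{2}$ on $\Gamma_{1}$ force each of the three terms $(g_{1}-g_{2},w^{+})_{H}$, $-(\lambda_{1}-\lambda_{2})\int_{\Gamma_{2}}q^{*}_{0}w^{+}\,d\gamma$ and $\alpha\int_{\Gamma_{1}}(b_{1}-b_{2})w^{+}\,d\gamma$ to be less than or equal to zero. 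Hence $a_{\alpha}(w^{+},w^{+})\leq 0$, and because $|\Gamma_{1}|>0$ the bilinear form $a_{\alpha}$ is coercive on all of $V$, so $w^{+}=0$ in $V$, i.e. $w^{+}\equiv 0$ in $\Omega$. Therefore $w\leq 0$, that is $u_{1}\leq u_{2}$ in $\Omega$, which is the thesis (the notation $u_{\alpha b_{i}\lambda_{i}g_{i}}$ in the statement standing for $u_{\infty\alpha b_{i}\lambda_{i}g_{i}}$, the solution of the elliptic problem).

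I do not expect a real obstacle here: the only two points needing a line of justification are the admissibility of $w^{+}$ as a test function --- the Stampacchia truncation result already invoked for Proposition~\ref{Prop2} --- and the $V$-coerciveness of $a_{\alpha}$, which rests on $|\Gamma_{1}|>0$ and is already used for the well-posedness of (\ref{PalfavariacionalEstacionario}). Alternatively one could argue through the decomposition $u_{\infty\alpha bqg}=u_{\infty\alpha b}+u_{\infty\alpha q}+u_{\infty\alpha g}$ with $u_{\infty\alpha q}=\lambda u_{\infty\alpha q^{*}_{0}}$ recorded just above the statement, treating the dependence on $b$, $\lambda$ and $g$ separately, but the direct subtraction above is the most economical route.
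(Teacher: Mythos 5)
Your proof is correct and is precisely the argument the paper intends: the proof of this proposition is omitted in the paper with the remark that it goes as in the earlier cases, and your truncation argument with $v=w^{+}$ is exactly the elliptic (time-independent) adaptation of the proof of Proposition \ref{Prop2}, with the additional Robin term $\alpha\int_{\Gamma_1}(b_1-b_2)w^{+}\,d\gamma\leq 0$ and the sign of $-(\lambda_1-\lambda_2)\int_{\Gamma_2}q^{*}_0w^{+}\,d\gamma$ both handled correctly. Your two auxiliary observations --- that the $V$-coercivity of $a_{\alpha}$ (from $|\Gamma_1|>0$) replaces the integration-in-time/initial-condition step of the parabolic proof, and that the notation in the statement should be read as $u_{\infty\alpha b_i\lambda_i g_i}$ --- are both apt.
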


\section{Asymptotic behaviour of the solutions when \linebreak $\mathbf{t\rightarrow +\infty}$}

In this Section, we study the convergence of the solutions of the problem (\ref{Pvariacional}) for fixed data $b\in H^{\frac{1}{2}}(\Gamma_{1})$, $q\in \mathcal{Q}$ and $g\in \mathcal{H}$ to the solution to the problem (\ref{PvariacionalEstacionario}) for the same $b\in H^{\frac{1}{2}}(\Gamma_{1})$ and fixed $q\in Q$ and $g\in H$, when $t\rightarrow +\infty$. Here, for the sake of simplicity, we denote by $u_{\infty}$ to the unique solution to the variational equality (\ref{PvariacionalEstacionario}) for data $q_{\infty}\in Q$ and  $g_{\infty}\in H$.

If we define
\[
F_{1}(t)= e^{\lambda_{0}t}||g(t)-g_{\infty}||^{2}_{H}, \qquad
F_{2}(t)= e^{\lambda_{0}t}||\gamma_{0}||^{2}||q(t)-q_{\infty}||^{2}_{Q}
\]
with $g\in \mathcal{H}$, $q\in \mathcal{Q}$, $\lambda_{0}$ the coerciveness constant of the bilinear form $a$ and $\gamma_{0}$ the trace operator, we can prove the following theorem.

\begin{thm}\label{asymptotic}
If $b\in H^{\frac{1}{2}}(\Gamma_{1})$, $q\in \mathcal{Q}$, $g\in \mathcal{H}$, $F_{1}\in L^{1}(0,\infty)$ and $F_{2}\in L^{1}(0,\infty)$, then
\begin{small}\begin{equation}
||u_{bqg}(t)-u_{\infty}||^{2}_{H} \leq ||u_{bqg}(0)-u_{\infty}||^{2}_{H}e^{-\lambda_{0}t}+ \frac{2e^{-\lambda_{0}t}}{\lambda_{0}}\left(||F_{1}||_{L^{1}(0,\infty)}+||F_{2}||_{L^{1}(0,\infty)}\right)\nonumber
\end{equation}\end{small}
and therefore
\begin{equation}
\lim_{t\rightarrow +\infty}u_{bqg}(t)=u_{\infty}\quad \text{in}\,\, H \,\, \text{strong (exponentially)}.\nonumber
\end{equation}
\end{thm}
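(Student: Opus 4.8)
The plan is to derive a Gronwall-type differential inequality for the map $t\mapsto ||u_{bqg}(t)-u_{\infty}||_H^2$ and then integrate an exponentially weighted version of it. First I would set $w(t)=u_{bqg}(t)-u_{\infty}$ and note that, since both $u_{bqg}(t)$ and $u_{\infty}$ have trace $b$ on $\Gamma_1$, one has $w(t)\in V_0$ for a.e.\ $t$, and $\dot w=\dot u_{bqg}\in L^2(V_0')$ because $u_{\infty}$ does not depend on $t$. Subtracting the stationary variational equality (\ref{PvariacionalEstacionario}) written for the data $q_{\infty},g_{\infty}$ from the parabolic one (\ref{Pvariacional}) written for $q,g$, I obtain for a.e.\ $t$ and every $v\in V_0$
\[
\langle \dot w(t), v\rangle + a(w(t),v) = (g(t)-g_{\infty},v)_H-(q(t)-q_{\infty},v)_Q .
\]

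Next I would take $v=w(t)\in V_0$ in this identity. Using the classical relation $\langle \dot w(t),w(t)\rangle=\tfrac{1}{2}\tfrac{d}{dt}||w(t)||_H^2$ (valid since $w\in L^2(V_0)$ and $\dot w\in L^2(V_0')$), the coerciveness $a(w(t),w(t))\ge \lambda_{0}||w(t)||_V^2$, the trace estimate $|(q(t)-q_{\infty},w(t))_Q|\le||\gamma_0||\,||q(t)-q_{\infty}||_Q\,||w(t)||_V$, Cauchy--Schwarz on the $H$-inner product, and Young's inequality applied to each of the two terms on the right, the contributions quadratic in $||w(t)||_V$ can be absorbed into the coercivity term; since $||w(t)||_H\le||w(t)||_V$ this yields
\[
\tfrac{1}{2}\tfrac{d}{dt}||w(t)||_H^2+\tfrac{\lambda_{0}}{2}||w(t)||_H^2\le \tfrac{1}{\lambda_{0}}\big(||g(t)-g_{\infty}||_H^2+||\gamma_0||^2||q(t)-q_{\infty}||_Q^2\big).
\]

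Then I would multiply through by $2e^{\lambda_{0}t}$ and observe that the left-hand side equals $\tfrac{d}{dt}\big(e^{\lambda_{0}t}||w(t)||_H^2\big)$ while the right-hand side is precisely $\tfrac{2}{\lambda_{0}}\big(F_{1}(t)+F_{2}(t)\big)$, with $F_{1},F_{2}$ as defined before the statement. Integrating on $[0,t]$ and estimating $\int_0^t\big(F_{1}(s)+F_{2}(s)\big)\,ds\le ||F_{1}||_{L^{1}(0,\infty)}+||F_{2}||_{L^{1}(0,\infty)}$---this is exactly where the hypotheses $F_{1},F_{2}\in L^{1}(0,\infty)$ are used---gives $e^{\lambda_{0}t}||w(t)||_H^2\le ||w(0)||_H^2+\tfrac{2}{\lambda_{0}}\big(||F_{1}||_{L^{1}(0,\infty)}+||F_{2}||_{L^{1}(0,\infty)}\big)$, and multiplying by $e^{-\lambda_{0}t}$ produces the stated inequality; letting $t\to+\infty$ then gives the exponential convergence of $u_{bqg}(t)$ to $u_{\infty}$ in $H$.

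The computation is essentially routine, so I do not expect a serious obstacle; the only point that needs some care is the justification of the energy identity $\tfrac{d}{dt}||w(t)||_H^2=2\langle\dot w(t),w(t)\rangle$ and the legitimacy of inserting $w(t)$ itself as a test function for a.e.\ $t$. Both follow from the Lions--Magenes regularity lemma for functions in $L^2(V_0)$ with derivative in $L^2(V_0')$, which underlies the well-posedness of (\ref{Pvariacional}) quoted from \cite{B,CH,DL,GuH}; I would invoke it without reproving it.
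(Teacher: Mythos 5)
Your proposal is correct and follows essentially the same route as the paper: subtract the stationary variational equality from the parabolic one, test with $w(t)=u_{bqg}(t)-u_{\infty}\in V_0$, absorb the right-hand side via the trace estimate and Young's inequality with the coercivity constant $\lambda_0$, and integrate the resulting inequality for $\frac{d}{dt}\bigl(e^{\lambda_0 t}\|w(t)\|_H^2\bigr)$. No substantive differences to report.
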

\begin{proof}
If we consider $w(t)=u_{bqg}(t)-u_{\infty}$, we have that $w(t)\in V_{0}$, $w(0)=v_{b}-u_{\infty}$ and $\dot{w}=\dot{u}_{bqg}$. Therefore, taking $v=w(t)$ in the variational equalities (\ref{Pvariacional}) and  (\ref{PvariacionalEstacionario}) respectively, and subtracting then we obtain
\begin{equation*}
\langle \dot{w}(t), w(t)\rangle+a(w(t),w(t))=\langle L(t)-L_{\infty},w(t)\rangle
\end{equation*}
that is
\begin{equation*}
\frac{1}{2}\frac{d}{dt}||w(t)||^{2}_{H}+\lambda_{0}||w(t)||^{2}_{V}\leq(g(t)-g_{\infty},w(t))_{H}-(q(t)-q_{\infty},w(t))_{Q}.
\end{equation*}
next
\begin{small}\begin{equation*}\begin{split}
\frac{1}{2}\frac{d}{dt}||w(t)||^{2}_{H}+\lambda_{0}||w(t)||^{2}_{V}& \leq ||g(t)-g_{\infty}||_{H} ||w(t)||_{V}+ ||q(t)-q_{\infty}||_{Q} ||w(t)||_{Q}\\& \leq \left(||g(t)-g_{\infty}||_{H} + ||\gamma_{0}|| ||q(t)-q_{\infty}||_{Q} \right)||w(t)||_{V}\\ &\leq \frac{\lambda_{0}}{2}||w(t)||^{2}_{V}\\& \quad +\frac{1}{2\lambda_{0}}\left(||g(t)-g_{\infty}||_{H} + ||\gamma_{0}|| ||q(t)-q_{\infty}||_{Q} \right)^{2}\end{split}
\end{equation*}\end{small}
where $\gamma_{0}$ is the trace operator. Then
\begin{equation*}\begin{split}
\frac{1}{2}\frac{d}{dt}||w(t)||^{2}_{H}+\frac{\lambda_{0}}{2}||w(t)||^{2}_{H}& \leq \frac{1}{\lambda_{0}}\left(||g(t)-g_{\infty}||^{2}_{H} + ||\gamma_{0}||^{2} ||q(t)-q_{\infty}||^{2}_{Q} \right)\end{split}
\end{equation*}
next, if we call $F(t)=||g(t)-g_{\infty}||^{2}_{H} + ||\gamma_{0}||^{2} ||q(t)-q_{\infty}||^{2}_{Q}$, we have
\begin{equation*}\begin{split}
\frac{d}{dt}\left(||w(t)||^{2}_{H}\right) e^{\lambda_{0}t}+\lambda_{0}||w(t)||^{2}_{H} e^{\lambda_{0}t} & \leq \frac{2}{\lambda_{0}}F(t) e^{\lambda_{0}t}\end{split}
\end{equation*}
or equivalently
\begin{equation*}
\frac{d}{dt}\left(||w(t)||^{2}_{H} e^{\lambda_{0}t}\right) \leq \frac{2}{\lambda_{0}}F(t) e^{\lambda_{0}t}.
\end{equation*}
Now, integrating between $0$ and $t$, we have
\begin{equation*}
||w(t)||^{2}_{H} e^{\lambda_{0}t}-||w(0)||^{2}_{H}\leq \frac{2}{\lambda_{0}}\int_{0}^{t}F(\tau) e^{\lambda_{0}\tau} d\tau
\end{equation*}
therefore
\begin{equation*}
||w(t)||^{2}_{H} \leq ||w(0)||^{2}_{H}e^{-\lambda_{0}t}+ \frac{2e^{-\lambda_{0}t}}{\lambda_{0}}\int_{0}^{t}F(\tau) e^{\lambda_{0}\tau} d\tau
\end{equation*}
and the thesis holds.
\end{proof}

\begin{cor}
If $b\in H^{\frac{1}{2}}(\Gamma_{1})$, $F_{1}\in L^{1}(0,\infty)$ and $F_{2}\in L^{1}(0,\infty)$, with $g(t)=g_{\infty}\in H$ and $q(t)=q_{\infty}\in Q$ then we have
\begin{equation}
||u_{bqg}(t)-u_{\infty}||_{H} \leq ||u_{bqg}(0)-u_{\infty}||_{H}e^{-\frac{\lambda_{0}}{2}t}.
\end{equation}
\end{cor}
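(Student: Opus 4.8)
The plan is to deduce this directly from Theorem~\ref{asymptotic} by checking that the extra hypotheses $g(t)=g_{\infty}$ and $q(t)=q_{\infty}$ force the two auxiliary functions $F_{1}$ and $F_{2}$ to vanish identically on $(0,\infty)$. Indeed, $F_{1}(t)=e^{\lambda_{0}t}\|g(t)-g_{\infty}\|_{H}^{2}=0$ and $F_{2}(t)=e^{\lambda_{0}t}\|\gamma_{0}\|^{2}\|q(t)-q_{\infty}\|_{Q}^{2}=0$ for a.e.\ $t$, so in particular $F_{1},F_{2}\in L^{1}(0,\infty)$ with $\|F_{1}\|_{L^{1}(0,\infty)}=\|F_{2}\|_{L^{1}(0,\infty)}=0$; thus every assumption of Theorem~\ref{asymptotic} is met. (Note also that then $L(t,v)=L_{\infty}(v)$ for all $v\in V_{0}$, which is exactly what makes the forcing term in the underlying energy identity disappear.)

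Next I would substitute these vanishing $L^{1}$-norms into the estimate provided by Theorem~\ref{asymptotic}, which then reduces to
\begin{equation*}
\|u_{bqg}(t)-u_{\infty}\|_{H}^{2} \le \|u_{bqg}(0)-u_{\infty}\|_{H}^{2}\, e^{-\lambda_{0}t}.
\end{equation*}
Taking square roots on both sides, using that $t\mapsto\sqrt{t}$ is increasing together with $\sqrt{e^{-\lambda_{0}t}}=e^{-\lambda_{0}t/2}$, yields the claimed inequality
\[
\|u_{bqg}(t)-u_{\infty}\|_{H} \le \|u_{bqg}(0)-u_{\infty}\|_{H}\, e^{-\frac{\lambda_{0}}{2}t}.
\]

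Alternatively, and perhaps more transparently, one could repeat the computation in the proof of Theorem~\ref{asymptotic} directly with $F\equiv 0$: setting $w(t)=u_{bqg}(t)-u_{\infty}\in V_{0}$ and subtracting the variational equalities (\ref{Pvariacional}) and (\ref{PvariacionalEstacionario}) tested with $v=w(t)$, the right-hand side vanishes because $L(t)=L_{\infty}$, so coerciveness of $a$ gives $\tfrac12\tfrac{d}{dt}\|w(t)\|_{H}^{2}+\lambda_{0}\|w(t)\|_{V}^{2}\le 0$; using $\|w(t)\|_{H}\le\|w(t)\|_{V}$ this implies $\tfrac{d}{dt}\big(\|w(t)\|_{H}^{2}e^{\lambda_{0}t}\big)\le 0$, and integrating on $[0,t]$ and taking a square root closes the argument.

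There is essentially no obstacle here: the statement is a specialization of Theorem~\ref{asymptotic}, and the only points worth a line of care are the passage from the squared estimate to the un-squared one (monotonicity of the square root and the identity $\sqrt{e^{-\lambda_{0}t}}=e^{-\lambda_{0}t/2}$) and the observation that the integrability hypotheses on $F_{1},F_{2}$ are automatically — indeed trivially — satisfied in this setting.
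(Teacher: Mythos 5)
Your proposal is correct and follows the same route as the paper, which simply notes that the corollary "results directly from Theorem \ref{asymptotic}"; your observation that $F_{1}\equiv F_{2}\equiv 0$ under the extra hypotheses, followed by substituting the vanishing $L^{1}$-norms and taking square roots, is exactly the intended specialization. The additional remark about redoing the energy estimate with $F\equiv 0$ is a fine sanity check but not needed.
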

\begin{proof}
This results directly from Theorem \ref{asymptotic}.
\end{proof}

\begin{rem}
We note that, if we consider the hypotheses: there exist $m\in (0,\lambda_{0})$ and $C_{1}=const.>0$ such that
\[
\lim_{t\rightarrow +\infty}\frac{||F_{1}||_{L^{1}(0,t)}+||F_{2}||_{L^{1}(0,t)}}{e^{mt}}\leq C_{1}
\]
we can prove the asymptotic behaviour obtained in Theorem \ref{asymptotic}.
\end{rem}
\begin{cor}
\begin{itemize}
\item [a)] If $b\in H^{\frac{1}{2}}(\Gamma_{1})$, with  $q(t)=q_{\infty}\in Q$ and there exist $ m\in (0,\lambda_{0})$ and $C_{2}=const.>0$ such that $\lim\limits_{t\rightarrow +\infty}\frac{||F_{1}||_{L^{1}(0,t)}}{e^{mt}}\leq C_{2}$, then
\[
\lim_{t\rightarrow +\infty}||u_{bqg}(t)-u_{\infty}||_{H}=0.
\]
\item [b)] If $b\in H^{\frac{1}{2}}(\Gamma_{1})$, with $g(t)=g_{\infty}\in H$ and there exist $ m\in (0,\lambda_{0})$ and $C_{3}=const.>0$ such that $\lim\limits_{t\rightarrow +\infty}\frac{||F_{2}||_{L^{1}(0,t)}}{e^{mt}}\leq C_{3}$, then
\[
\lim_{t\rightarrow +\infty}||u_{bqg}(t)-u_{\infty}||_{Q}=0.
\]
\end{itemize}
\end{cor}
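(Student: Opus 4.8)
The plan is to deduce both statements from the a priori bound that is in fact established inside the proof of Theorem~\ref{asymptotic} before the $L^1(0,\infty)$ hypotheses are used: with $w(t)=u_{bqg}(t)-u_\infty\in V_0$ one has there the inequality
\[
\frac{d}{dt}\Bigl(e^{\lambda_0 t}\|w(t)\|_H^2\Bigr)\le\frac{2}{\lambda_0}\bigl(F_1(t)+F_2(t)\bigr),
\]
which holds because $F(t)e^{\lambda_0 t}=F_1(t)+F_2(t)$ in the notation of that proof. Integrating on $(0,t)$ gives, for every $t>0$,
\[
\|u_{bqg}(t)-u_\infty\|_H^2\le\|u_{bqg}(0)-u_\infty\|_H^2\,e^{-\lambda_0 t}+\frac{2e^{-\lambda_0 t}}{\lambda_0}\bigl(\|F_1\|_{L^1(0,t)}+\|F_2\|_{L^1(0,t)}\bigr),
\]
so everything reduces to controlling the growth of the two partial $L^1$-norms, which is precisely what the Remark stated before the corollary provides.

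For part (a), the assumption $q(t)\equiv q_\infty$ forces $F_2\equiv 0$, and from $\limsup_{t\to\infty}e^{-mt}\|F_1\|_{L^1(0,t)}\le C_2$ we choose $T_0$ so that $\|F_1\|_{L^1(0,t)}\le(C_2+1)e^{mt}$ for $t\ge T_0$; inserting this in the bound above gives, for $t\ge T_0$,
\[
\|u_{bqg}(t)-u_\infty\|_H^2\le\|u_{bqg}(0)-u_\infty\|_H^2\,e^{-\lambda_0 t}+\frac{2(C_2+1)}{\lambda_0}\,e^{-(\lambda_0-m)t},
\]
and the right-hand side tends to $0$ because $0<m<\lambda_0$. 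Part (b) is the mirror situation: $g(t)\equiv g_\infty$ forces $F_1\equiv 0$, and the hypothesis on $\|F_2\|_{L^1(0,t)}$ (again with exponent $m<\lambda_0$) yields $\|u_{bqg}(t)-u_\infty\|_H\to 0$ by the identical computation.

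The only genuine work in (b) is that its conclusion is stated in the trace norm $Q=L^2(\Gamma_2)$, not in $H$. To get that, I would keep the dissipation term in the energy inequality, i.e. use $\frac{d}{dt}\|w(t)\|_H^2+\lambda_0\|w(t)\|_V^2\le\frac{2}{\lambda_0}F_2(t)e^{-\lambda_0 t}$; integrating this over $[t,t+1]$ and invoking both the $H$-convergence already proven and the bound $\int_0^\infty\|q(\tau)-q_\infty\|_Q^2\,d\tau<\infty$ (which follows from the growth assumption on $F_2$ by an Abel-type integration by parts, precisely since $m<\lambda_0$) shows $\int_t^{t+1}\|w(\tau)\|_V^2\,d\tau\to 0$; then the trace inequality $\|\cdot\|_Q\le\|\gamma_0\|\,\|\cdot\|_V$ together with equicontinuity of $t\mapsto\|w(t)\|_V^2$ upgrades this to $\|w(t)\|_V\to 0$, hence $\|u_{bqg}(t)-u_\infty\|_Q\to 0$. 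I expect this last step, passing from an $L^1$-in-time decay of $\|w\|_V^2$ to a pointwise one, to be the main obstacle; if instead the intended conclusion in (b) is the $H$-norm, as in (a), then the corollary follows at once from the displayed estimate and the Remark.
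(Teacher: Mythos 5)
Your argument for part (a) and for the $H$-norm reading of part (b) is exactly the intended one: the paper offers no separate proof of this corollary, treating it as an immediate consequence of the displayed estimate in the proof of Theorem 6.1 together with the preceding Remark, and your computation (insert $F_2\equiv 0$, bound $\|F_1\|_{L^1(0,t)}\le (C_2+1)e^{mt}$ for large $t$, conclude decay like $e^{-(\lambda_0-m)t}$) is precisely that. Your reading of the intermediate inequality $\frac{d}{dt}\bigl(e^{\lambda_0 t}\|w(t)\|_H^2\bigr)\le\frac{2}{\lambda_0}\bigl(F_1(t)+F_2(t)\bigr)$ is also correct, since $F(t)e^{\lambda_0 t}=F_1(t)+F_2(t)$ by the definitions of $F_1$, $F_2$ at the start of Section 6.

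Your concern about the $Q$-norm in part (b) is well founded: the theorem only yields convergence in $H=L^2(\Omega)$, and nothing in the paper controls the trace of $w(t)$ on $\Gamma_2$ pointwise in time; the symmetric phrasing of (a) and (b) strongly suggests the subscript $Q$ is a misprint for $H$, in which case (b) follows verbatim from your part (a) argument with the roles of $F_1$ and $F_2$ exchanged. If one insisted on the $Q$-norm conclusion as literally stated, your sketched repair is not complete: the passage from $\int_t^{t+1}\|w(\tau)\|_V^2\,d\tau\to 0$ to $\|w(t)\|_V\to 0$ requires an equicontinuity (or uniform continuity into $V$) of $t\mapsto\|w(t)\|_V$ that the weak solution framework of (\ref{Pvariacional}) does not supply — a priori one only has $w\in L^2(V)$ with $\dot w\in L^2(V_0')$, hence continuity into $H$, not into $V$. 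So that step would genuinely fail without additional regularity hypotheses; since you flag it yourself and fall back on the $H$-norm reading, the proposal as a whole is sound and consistent with what the paper actually proves.
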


\begin{rem}
An open problem is to study if (\ref{lambda1}) is convergent to (\ref{lambda2}) and (\ref{lambda3}) is convergent to (\ref{lambda2alfa}) (for each $\alpha >0$),  when $T\rightarrow +\infty$. We hope that these convergences do not happen. This assumption is based on the fact that a function $g(t)$ ($\forall t >0$) can be strongly convergent in $H$ to a function $g_{\infty}$, but not necessarily $g$ is strongly convergent to the same function $g_{\infty}$ in $\mathcal{H}$, which is showed in the following counterexample.
\end{rem}
\begin{exa}
Let $\Omega=(0,1)$ be and $g(t)=g_{\infty}+e^{-t}$, we have that
\[
\int_{0}^{1}(g(t)-g_{\infty})^{2}dx=\int_{0}^{1}(e^{-t})^{2}dx=e^{-2t}\rightarrow 0,\quad \text{if} \,\,\, t\rightarrow +\infty,
\]
and
\[
\int_{0}^{t}\int_{0}^{1}(g(\tau)-g_{\infty})^{2}dx d\tau=\int_{0}^{t}(e^{-\tau})^{2}d\tau=\frac{1}{2}(1-e^{-2t})\rightarrow \frac{1}{2},\quad \text{if} \,\,\,  t\rightarrow +\infty.
\]
\end{exa}

\section*{Acknowledgements}

The present work has been partially sponsored by the European Union’s Horizon 2020 Research and Innovation Programme under the Marie Sklo\-dowska-Curie grant agreement 823731 CONMECH for the third author; by the Project PIP No. 0275 from CONICET and Universidad Austral, Rosario, Argentina for the second and third authors; and by the Project PPI No. 18/C555 from SECyT-UNRC, Río Cuarto, Argentina for the first and second authors. The authors thank the anonymous referee whose comments helped us to improve our paper.


\end{document}